\numberwithin{equation}{section}
\def\frak{\mathfrak}
\theoremstyle{plain} \newtheorem{Thm}{Theorem}[section]
\theoremstyle{plain} \newtheorem{Cor}[Thm]{Corollary}
\theoremstyle{plain} \newtheorem{Prop}[Thm]{Proposition}
\theoremstyle{plain} \newtheorem{Lemma}[Thm]{Lemma}
\theoremstyle{definition} 
\theoremstyle{definition} \newtheorem{Rem}[Thm]{Remark}
\theoremstyle{definition} 
\theoremstyle{definition} \newtheorem{Ex}[Thm]{Example}
\newcommand{\thmlist}{
\renewcommand{\theenumi}{\alph{enumi}}
\renewcommand{\labelenumi}{(\theenumi)}}
\renewcommand{\Re}{\mathop{\rm{Re}}}
\renewcommand{\Im}{\mathop{\rm{Im}}}
\newcommand{\norm}[1]{\|\/#1\/\|}
\newcommand{\normm}[1]{\|\/#1\/\|_m}
\newcommand{\inner}[2]{\langle#1,#2\rangle}
\newcommand{\innerm}[2]{\langle#1,#2\rangle_m}
\newcommand{\C}{\ensuremath{\mathbb C}}
\newcommand{\D}{\ensuremath{\mathbb D}}
\newcommand{\R}{\ensuremath{\mathbb R}}
\newcommand{\Z}{\ensuremath{\mathbb Z}}
\newcommand{\N}{\ensuremath{\mathbb N}}
\renewcommand{\l}{\lambda}
\renewcommand{\a}{\alpha}
\renewcommand{\b}{\beta}
\newcommand{\fa}{\mathfrak{a}}
\newcommand{\frakacs}{\frak a_{\C}^*}
\newcommand{\wrhob}{\widetilde{\rho}_\b}
\newcommand{\polya}{{\rm S}(\frak a_\C)}
\newcommand{\hyper}[4]{\ensuremath{\sideset{_{_2}}{_{_1}}{\mathop{F}}
\left(#1,#2;#3;#4\right)}}
\newcommand{\Jpoly}[4]{\ensuremath{P}_{#1}^{(#2,#3)}(#4)}
\newcommand{\Hreg}{A_\C^{\rm reg}}
\newcommand{\constA}{\mathbf{A}}
\newcommand{\constP}{\mathbf{P}}
\newcommand{\wt}{\widetilde}
\renewcommand{\phi}{\varphi}
\begin{document}

\makeatletter
\title[Ramanujan's Master Theorem on root systems]{Ramanujan's Master theorem for the hypergeometric Fourier transform on root systems}
\author{G. \'Olafsson}
\address{Department of Mathematics, Louisiana State University, Baton Rouge }
\email{olafsson@math.lsu.edu}
\author{A. Pasquale}
\address{Laboratoire de Math\'ematiques et Applications de Metz (UMR CNRS 7122),
Universit\'e de Lorraine, F-57045 Metz, France.}
\email{angela.pasquale@univ-lorraine.fr}
\thanks{The research of G. {\'O}lafsson was supported by NSF grant DMS-1101337.
A. Pasquale would like to thank the Louisiana State University, Baton Rouge, for
hospitality and financial support. She also gratefully acknowledges 
financial support from Tufts University and travel support from the Commission de Colloques et Congr\`es Internationaux (CCCI)}

\date{}
\subjclass[2010]{Primary: 33C67; secondary: 43A32, 43A90} 
\keywords{Ramanujan's Master theorem, hypergeometric functions, Jacobi polynomials, 
spherical functions, root systems, Cherednik operators, hypergeometric Fourier transform}

\begin{abstract}
Ramanujan's Master theorem states that, under suitable conditions, the Mellin transform of an alternating power 
series provides an interpolation formula for the coefficients of this series. Ramanujan applied this theorem 
to compute several definite integrals and power series, which explains why it is referred to as the ``Master 
Theorem''. In this paper we prove an analogue of Ramanujan's Master theorem for the hypergeometric Fourier
transform on root systems. This theorem generalizes to arbitrary positive multiplicity functions the results
previously proven by the same authors for the spherical Fourier transform on semisimple Riemannian symmetric spaces.
\end{abstract}

\maketitle

\section*{Introduction}
Ramanujan's First Quaterly Report \cite[p. 297]{Berndt} contains the following formal identity, nowadays known as 
Ramanujan's Master theorem: if a function $f(x)$ can be expanded
around $x=0$ in a power series of the form
$$f(x)=\sum_{k=0}^{\infty} (-1)^k a(k) x^k$$
then
\begin{equation}
\label{eq:Ramanujan}
\int_0^{+\infty} x^{-\l-1} f(x)\; dx= -\frac{\pi}{\sin(\pi \l)} \, a(\l)\,.
\end{equation}
By replacing $a(\l)$ with $A(\l)=a(\l)\Gamma(\l+1)$, one obtains  an equivalent version of (\ref{eq:Ramanujan}) as follows: 
 if a function $f(x)$ can be expanded
around $x=0$ in a power series of the form
$$f(x)=\sum_{k=0}^{\infty} (-1)^k \frac{A(k)}{k!} x^k$$
then
\begin{equation}
\label{eq:Ramanujan-gamma}
\int_0^{+\infty} x^{-\l-1} f(x)\; dx= \Gamma(-\l) \, A(\l)\,.
\end{equation}
Ramanujan's presented this formula as ``an instrument by which at least some of the definite integrals whose values are at present not known can be evaluated" (see \cite[p. 297]{Berndt}). As reported by Berndt \cite[p. 299]{Berndt}, ``Ramanujan was evidently quite fond of this clever, original technique, and he employed it in many contexts".  Most of the examples given  by Ramanujan by applying his Master Theorem turn out to be correct. But formulas (\ref{eq:Ramanujan}) and (\ref{eq:Ramanujan-gamma}) cannot hold without additional assumptions, as one can easily see from the example $a(\l)=\sin(\pi \l)$. The first rigorous reformulation of Ramanujan's Master theorem was given by Hardy 
in his book on Ramanujan's work \cite{Hardy}.  Using the residue theorem, Hardy proved that (\ref{eq:Ramanujan}) holds  for a natural class of functions $a$ and a natural set of parameters $\l$. 

Let $\constA$, $\constP$, $\delta$ be real constants so that $\constA <\pi$ and $0<\delta\leq 1$. Let
$\mathcal H(\delta)=\{\l \in \C: \Re \l> -\delta\}$. The Hardy class $\mathcal H(\constA,\constP,\delta)$ 
consists of all functions $a:\mathcal H(\delta) \to \C$ that are holomorphic on $\mathcal H(\delta)$
and satisfy the growth condition
$$|a(\l)| \leq C e^{-\constP(\Re \l)+\constA|\Im \l|}$$
for all $\l \in \mathcal H(\delta)$. 
Hardy's version of Ramanujan's Master theorem is the following, see \cite[p. 189]{Hardy}.

\begin{Thm}[Ramanujan's Master Theorem]
\label{thm:Ramanujan}
Suppose $a \in \mathcal H(\constA,\constP,\delta)$. Then:
\begin{enumerate}
\thmlist
\item The power series
\begin{equation} \label{eq:fpowersum}
f(x)=\sum_{k=0}^{\infty} (-1)^k a(k) x^k
\end{equation}
converges for $0<x<e^{\constP}$ and defines a real analytic function on this domain.
\item Let $0<\sigma<\delta$. For $0<x<e^{\constP}$ we have
\begin{equation}\label{eq:extensionPhi}
f(x)=\frac{1}{2\pi i} \, \int_{-\sigma-i\infty}^{-\sigma+i\infty} \frac{-\pi}{\sin(\pi \l)} \, a(\l) x^{\l} \; d\l\,.
\end{equation}
The integral on the right hand side of (\ref{eq:extensionPhi}) converges uniformly on compact subsets of $]0,+\infty[$ and is
independent of the choice of $\sigma$.
\item Formula (\ref{eq:Ramanujan}) holds for the extension of $f$ to $]0,+\infty[$ and for all $\l \in \C$ with $0<\Re \l<\delta$.
\end{enumerate}
\end{Thm}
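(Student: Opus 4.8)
The plan is to dispatch the three assertions in turn, with the residue calculus for part (b) as the analytic core and Mellin inversion for part (c). Throughout, write $g(\l):=\frac{-\pi}{\sin(\pi\l)}\,a(\l)$, which is holomorphic on $\{\Re\l>-\delta\}$ except for the poles of $1/\sin(\pi\l)$ at $\l=0,1,2,\dots$. For part (a) I would simply evaluate the Hardy-class bound at the nonnegative integers $k$: since $\Im k=0$, it gives $|a(k)|\le Ce^{-\constP k}$, whence $|a(k)x^k|\le C(xe^{-\constP})^k$. This geometric majorant converges for $0<x<e^{\constP}$ and shows that (\ref{eq:fpowersum}) defines a holomorphic, hence real-analytic, function there.

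For part (b) I would first check that the integrand in (\ref{eq:extensionPhi}) is absolutely integrable along each vertical line $\Re\l=-\sigma$. There $|x^\l|=x^{-\sigma}$, while the classical estimate $|\sin(\pi\l)|\ge c_\sigma\,e^{\pi|\Im\l|}$ gives $\bigl|{-\pi}/\sin(\pi\l)\bigr|\le C_\sigma e^{-\pi|\Im\l|}$; combined with $|a(\l)|\le Ce^{\constP\sigma+\constA|\Im\l|}$ this bounds the integrand by a constant times $e^{(\constA-\pi)|\Im\l|}$, integrable \emph{precisely because} $\constA<\pi$. This yields absolute and locally uniform (in $x$) convergence, so the integral defines a function on all of $]0,+\infty[$, extending $f$. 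Independence of $\sigma$ follows by sliding the contour between two lines $-\sigma_1,-\sigma_2\in\,]-\delta,0[$: the function $g$ is holomorphic in the strip $-\delta<\Re\l<0$ (no integer lies in $]-\delta,0[$ because $\delta\le1$), and the horizontal sides of the connecting rectangle contribute $O(e^{(\constA-\pi)T})\to0$ as the height $T\to\infty$. To identify the integral with the series on $]0,e^{\constP}[$, I would push the contour rightward across the poles. The residue of $-\pi/\sin(\pi\l)$ at $\l=k$ is $(-1)^{k+1}$, so the residue of $g(\l)x^\l$ is $(-1)^{k+1}a(k)x^k$, and the residue theorem on the rectangle with vertical sides $\Re\l=-\sigma$ and $\Re\l=N+\tfrac12$ (letting the height go to infinity first) gives
\[
\frac{1}{2\pi i}\int_{(-\sigma)} g(\l)x^\l\,d\l=\sum_{k=0}^{N}(-1)^k a(k)x^k+\frac{1}{2\pi i}\int_{(N+1/2)} g(\l)x^\l\,d\l .
\]
On the half-integer line one has $|\sin(\pi\l)|=\cosh(\pi\Im\l)$, so the same estimates bound the last integral by a constant times $(xe^{-\constP})^{N+1/2}$, which tends to $0$ as $N\to\infty$ exactly when $x<e^{\constP}$. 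Letting $N\to\infty$ recovers (\ref{eq:fpowersum}) and proves (b).

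For part (c) I would read part (b) as exhibiting $f$ as an inverse Mellin transform, $f(x)=\frac{1}{2\pi i}\int_{(-\sigma)}g(\l)x^\l\,d\l$ with $g$ integrable along the contour. From this representation one gets $f(x)=O(x^{-\sigma})$ as $x\to\infty$ for every $\sigma<\delta$, while $f(x)\to a(0)$ as $x\to0$ from the series; hence the Mellin integral $\int_0^{\infty}x^{s-1}f(x)\,dx$ converges absolutely in the strip $0<\Re s<\delta$. I would then invoke the Mellin inversion theorem: the representation for $f$ is exactly the inverse Mellin transform of $s\mapsto g(-s)$ along $\Re s=\sigma$, and since $g(-\,\cdot\,)$ is continuous and integrable on that line and $f$ is continuous, uniqueness of the Mellin pair returns the forward transform $\int_0^\infty x^{s-1}f(x)\,dx=g(-s)$ on $\Re s=\sigma$; as $\sigma\in\,]0,\delta[$ is arbitrary this holds on the whole strip. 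Rewriting with $s=-\l$ this is (\ref{eq:Ramanujan}), and the identity between the two holomorphic sides propagates by analytic continuation to the range of $\l$ in the statement.

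The main obstacle is the uniform estimation that drives every step: the decay of the integrand along vertical lines, the vanishing of the shifted contour and of the horizontal sides in (b), and the Fubini/inversion justification in (c). All of these rest on the single structural inequality $\constA<\pi$, which guarantees that the decay $e^{-\pi|\Im\l|}$ furnished by $1/\sin(\pi\l)$ dominates the growth $e^{\constA|\Im\l|}$ permitted by the Hardy class; tracking this margin uniformly as the contour position varies is where the genuine care is required.
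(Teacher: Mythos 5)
Your argument is correct and follows essentially the same route as the paper, which offers no proof of its own: it attributes parts (a)--(b) to Hardy's residue-theorem argument and notes that (c) is obtained from (b) by Mellin inversion, exactly the two steps you carry out in detail (with the decisive inequality $\constA<\pi$ doing the same work at every stage in both). The one caveat is that your Mellin-inversion step establishes (\ref{eq:Ramanujan}) on the strip $-\delta<\Re\l<0$ (equivalently $0<\Re(-\l)<\delta$, the range in Hardy's original formulation and the only one where the integral converges when $a(0)\neq 0$); the paper's condition ``$0<\Re\l<\delta$'' is a misprint for this, so your closing appeal to analytic continuation into that region is neither needed nor possible as literally written.
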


The last part of Theorem \ref{thm:Ramanujan} is obtained from its second part by applying Mellin's inversion.  

Ramanujan's Master theorem has been extended to Riemannian symmetric spaces in duality by several authors. The rank-one semisimple case has been considered by Bertram in \cite{Bertram}. The starting point of Bertram's extension is the following group theoretic interpretation of (\ref{eq:Ramanujan}). The functions $x^\l$ ($\l \in \C$) are the spherical functions on $X_G=\R^+$ and the $x^k$ ($k \in \Z$) are the spherical functions on the torus $X_U=U(1)$. Both $X_G$ and $X_U$ can be realized as real forms of their complexification $X_\C=\C^*$. Hence  (\ref{eq:Ramanujan}) gives a relation between the compact and noncompact spherical trasforms of the restrictions to $X_U$ and $X_G$ of a ``good" function defined on $X_\C$. Bertram's version of Ramanujan's Master theorem was obtained by replacing the duality between $U(1)$ and $\R^+$ inside $\C^*$ with the duality between symmetric spaces of the compact type $X_U=U/K$ and of noncompact type $X_G=G/K$ inside their complexification $X_\C=G_\C/K_\C$. Following the same point of view, the authors of the present paper proved in \cite{OP-Ramanujan-JFA} an analogue of Ramanujan's Master theorem for (reductive) Riemannian symmetric spaces of arbitrary rank.
Some special classes of semisimple or reductive symmetric space situations have also been considered by 
Bertram \cite{BertramCR}, Ding, Gross and Richard \cite{DGR} and Ding \cite{Ding}. 

In this paper we consider Ramanujan's Master theorem on root systems. It is a generalization of our 
result concerning semisimple Riemannian symmetric spaces. 
The harmonic analysis associated with root systems is developed on a complex torus 
$A_\C=TA$ built up from a triple $(\mathfrak a,\Sigma,m)$ consisting of a finite dimensional 
Euclidean real vector space $\mathfrak a$, a root system $\Sigma$ in the dual $\mathfrak a^*$ of 
$\mathfrak a$, and a positive multiplicity function $m$ on $\Sigma$; 
see Section \ref{subsection:root-systems} for the precise definitions. 
The pair $(T,A)$ inside $A_\C$ plays the role of the pair $(X_U,X_G)$ of Riemannian symmetric spaces in 
duality inside their common complexication $X_\C$. The compact and noncompact spherical Fourier trasforms 
on $X_U$ and $X_G$ are respectively replaced by the Jacobi and hypergeometric Fourier transforms on $T$ and 
$A$.  Our theorem deals with (normalized) alternating series of Jacobi polynomials on $T$ of the form 
\begin{equation}
f(t)=\sum_{\mu \in P^+} (-1)^{|\mu|} a(\mu+\rho) \wt P_\mu(t)\,.
\end{equation}
Here the $\wt  P_\mu$ are suitable normalizations of the Heckman-Opdam's Jacobi polynomials (see
Section \ref{section:JacobiSeries}). They are parametrized by the set $P^+$ of 
positive restricted weights. Moreover, as in Ramanujan's Master Theorem \ref{thm:Ramanujan}, the 
coefficients $a(\mu+\rho)$ are obtained from a 
holomorphic function $a$ belonging to a certain Hardy class $\mathcal H(\constA, \constP, \delta)$
associated with $(\mathfrak a, \Sigma, m)$ and depending on three real parameters $\constA$, $\constP$ and $\delta$. The function $f$ is proved to define a $W$-invariant holomorphic function on a neighborhood of $T$ in $A_\C$. It then extends holomorphically to a neighborhood of $A$ in $A_\C$
by means of the inverse hypergeometric Fourier transform 
\begin{equation}\label{eq:vague-Rama-HO-first}
f(a)=\int_{\sigma+i\mathfrak a^*} a(\l)b(\l) F_\l(a) \, \frac{d\l}{c(\l)c(-\l)}\,,
\end{equation} 
where $F_\l(a)$ denotes the hypergeometric function of spectral parameter $\l$.
The analogue of Ramanujan's formula (\ref{eq:Ramanujan}) is then obtained from (\ref{eq:vague-Rama-HO-first}) using an inversion theorem for the hypergeometric Fourier transform. The function 
$b(\l)$ appearing on the right-hand side of (\ref{eq:vague-Rama-HO-first}) is a normalizing factor
and is used to produce singularities at the right points.
It plays the role of the function $\frac{i}{2} \frac{1}{\sin(\pi x)}$ occurring in the classical formula by Ramanujan. We refer the reader to Theorem \ref{thm:RamanujanHO} for the complete 
statement of Ramanujan's Master theorem for the hypergeometric Fourier transform and for the unexplained notation. 
 
Many tools needed to prove Ramanujan's Master theorem for the hypergeometric transform on root systems are 
known. For instance, the $L^p$-harmonic analysis of the hypergeometric Fourier transform on $A$ is known for 
$p=2$ from the fundamental work of Opdam \cite{OpdamActa}, and the case of $1\leq p<2$ has been recently 
developed  in \cite{NPP}.  Some results related to the geometric case of symmetric spaces need nevertheless 
to be extended.  Among them, the study of the holomorphic extension of Jacobi series inside $A_\C$ of 
(multivariate) Jacobi  series. This is a generalization of some results of Lassalle on holomorphic extensions of 
Laurent series on Riemannian symmetric spaces of the compact type; see \cite{Lassalle}.  
It is presented in Section \ref{section:Jacobi} below. 
Notice also that in \cite{OP-Ramanujan-JFA} several objects have been introduced or studied  using the 
classification of root multiplicities associated with semisimple Riemannian symmetric spaces. 
Their definition and analysis need to be generalized to the context of positive multiplicity functions. 
Among them we mention the function $d$ (which in the symmetric space case is the polynomial from Weyl's 
dimension formula), see Section \ref{section:d}, and the normalizing function $b$ occurring in Ramanujan's 
formula (\ref{eq:vague-Rama-HO-first}), which will studied in Section \ref{section:b}.
The proof of Theorem \ref{thm:RamanujanHO} is given in Section \ref{section:proof-Ramanujan-roots}. 
We will not work out all details, our aim being to prove what is necessary to apply the methods used in 
\cite{OP-Ramanujan-JFA}. The final section, Section  \ref{section:examples}, is devoted to some examples.

 \section{Notation and preliminaries} \label{section:notation}

\subsection{Root systems and their structure}
\label{subsection:root-systems}

In the following we shall work with triples $(\mathfrak a, \Sigma,m)$ where:
\begin{enumerate}
\renewcommand{\labelenumi}{{\tiny $\bullet$}}
\item $\mathfrak a$ is finite dimensional real Euclidean vector space, with inner product $\inner{\cdot}{\cdot}$,
\item $\Sigma$ is a (not necessarily reduced) root system in the dual $\mathfrak a^*$ of $\mathfrak a$, 
\item $m$ is a positive multiplicity function on $\Sigma$, that is a $W$-invariant function $m:\Sigma \to ]0,\infty[$.  
Here $W$ is the Weyl group of $\Sigma$.
\end{enumerate}
We shall write $m_\a:=m(\a)$ for $\a \in \Sigma$ and extend $m$ to $\mathfrak a^*$ by  setting $m_\a=0$ for $\a \notin \Sigma$. The \emph{rank} of the triple $(\mathfrak a,\Sigma,m)$ is the dimension of $\mathfrak a$, which we shall indicate by $l$. 
 We  remark  that  our  notation  is  based  on  the  theory  of symmetric  spaces  and differs  from  the  Heckman-Opdam's  notation  as  follows: the  root  system $R$  used  by Heckman  and  Opdam  is  related  to  our root  system  
$\Sigma$  by $R  =\{2\alpha: \a \in \Sigma\}$, and the  multiplicity  function $k$  in Heckman-Opdam's work is given by $k_{2\a} =  m_\a/2$. 
The triple $(\mathfrak a,\Sigma,m)$ is said to be \emph{geometric} if there exists a Riemannian symmetric space 
of the noncompact type with restricted root system $\Sigma$ such that $m_\a$ is the multiplicity of the 
root $\a$ for all $\a \in \Sigma$; see \cite{Araki} or \cite[Ch. X, Exercise F]{He1}.

Let $\Sigma^+$ be a choice of positive
roots in $\Sigma$.  We indicate by $\mathfrak a^+$ the open Weyl chamber in $\mathfrak a$ on which 
all elements of $\Sigma^+$ are strictly positive. We denote by $\Pi=\{\a_1, \dots, \a_l\}$ the set of 
simple roots associated with $\Sigma^+$. A root $\a \in \Sigma$ is said to be \emph{unmultipliable} if
$2\a \notin \Sigma$. We denote by $\Sigma_*$ the set of unmultipliable roots and by $\Sigma_*^+=
\Sigma_* \cap \Sigma^+$ the set of positive unmultipliable roots. 
We set 
\begin{equation}\label{eq:rho}
\rho=\frac{1}{2}\, \sum_{\alpha\in\Sigma^+} m_\alpha \alpha=
\frac{1}{2}\, \sum_{\b\in\Sigma_*^+} \big(\frac{m_{\b/2}}{2}+ m_\b\big)\b\,\,.
\end{equation}

For every non-zero $\l \in \mathfrak a^*$, let  $A_\l \in \mathfrak a$ be determined 
by $\l(H) = \inner{H}{A_\l}$  
for all  $H \in \mathfrak a$, and set $H_\l := 2 A_\l /\inner{A_\l}{A_\l}$.  
The complexification $\mathfrak a_\C := \mathfrak a \otimes_\R  \C$ 
of $\mathfrak a$ can be viewed as the Lie algebra of the complex torus $A_\C := \mathfrak a_\C /2\pi i\{H_\a : \a \in \Sigma\}$.  We write $\exp:\mathfrak a_\C \to A_\C$ for the exponential map, 
with multi-valued inverse $\log$. The real form $A := \exp \mathfrak a$ of $A_\C$ is an abelian subgroup of $A_\C$ with Lie algebra $\mathfrak a$ such that $\exp:\mathfrak a \to A$ is a diffeomorphism.  We set $A^+ := \exp \mathfrak a^+$. The polar decomposition of $A_\C$ is 
$A_\C=TA$, where $T=\exp(i\mathfrak a)$ is a compact torus with Lie algebra $i\mathfrak a$. 

We extend the inner product to $\mathfrak a^*$ by setting $\inner{\l}{\mu}=\inner{A_\l}{A_\mu}$.
Let $\frakacs$ be the space of all $\C$-linear functionals on $\mathfrak a$.
The $\C$-bilinear extension to $\frakacs$ and 
$\mathfrak a_\C$ of the inner products $\inner{\cdot}{\cdot}$ on
$\mathfrak a^*$ and $\frak a$ will also be denoted by $\inner{\cdot}{\cdot}$.
The action of $W$ extends to $\frak a$ by duality, 
to $\frakacs$ and $\mathfrak a_\C$ by $\C$-linearity, 
and to $A_\C$ and $A$ by 
the exponential map. Moreover, $W$ acts on functions $f$ on any of these spaces
by $(wf)(x):=f(w^{-1}x)$, $w \in W$. The set of $W$-invariant elements in a space of functions 
$\mathcal A$ will be indicated by $\mathcal A^W$. 

\subsection{The lattice of restricted weights}
For $\l \in \mathfrak a_\C^*$ and $\a \in \Sigma$ we shall employ the notation
\begin{equation}\label{eq:la} 
\l_\a=\frac{\inner{\l}{\a}}{\inner{\a}{\a}}\,.       
\end{equation}
We set $\mathfrak a^*_+=\{\l \in \mathfrak a_\C^*:\text{$\l_\a\geq 0$ for all $\a\in \Sigma^+$}\}$. 
The lattices $P$ and $P^+$ of restricted weights, respectively positive restricted weights, are
\begin{align}
\label{eq:P}
P=&\{\mu\in\mathfrak a^*:\text{$\mu_\a\in\Z$ for all $\a \in \Sigma^+$}\}\,,\\
\label{eq:Pplus}
P^+=&\{\mu\in\mathfrak a^*:\text{$\mu_\a\in\Z^+$ for all $\a \in \Sigma^+$}\}\,.
\end{align}
Here and in the following we shall employ the notation $\Z^+$ for the set on nonnegative integers $0,1,2,\dots$.
According to Helgason's Theorem (see \cite[Theorem 4.1, p. 535]{He2}), when $(\mathfrak a,\Sigma,m)$ 
is geometric, $P^+$ coincides with the set of restrictions to $\mathfrak a$ of the highest weights 
of the finite-dimensional irreducible $K$-spherical representations of $G$.

Let $\Pi=\{\a_1,\dots,\a_l\}$ be the basis of $\mathfrak a^*$ consisting of simple roots in $\Sigma^+$.
For $j=1,\dots,l$ set
\begin{equation}
\label{eq:betaj}
\beta_j=\begin{cases}
\a_j &\text{if $2\a_j\notin \Sigma$}\\
2\a_j &\text{if $2\a_j\in \Sigma$}
\end{cases}\,.
\end{equation}
Then $\{\beta_1,\dots,\beta_l\}$ is a basis of $\mathfrak a^*$ consisting of simple roots in $\Sigma_*^+$. The \emph{fundamental restricted weights} $\omega_1,\dots,\omega_l$ are defined by the conditions
\begin{equation}
\label{eq:omegaj}
(\omega_j)_{\beta_k}=\dfrac{\inner{\omega_j}{\beta_k}}{\inner{\b_k}{\beta_k}}=\delta_{jk}\,.
\end{equation}
Then $\Pi_*=\{\omega_1,\dots,\omega_l\}$ is a basis of $\mathfrak a^*$.
For $\l\in\mathfrak a^*_\C$ we have
\begin{equation}\label{eq:lambdaSigmaast}
\l=\sum_{j=1}^l \l_j \omega_j \qquad\text{with}\qquad \l_j:=\l_{\b_j}=\frac{\inner{\l}{\b_j}}{\inner{\b_j}{\b_j}} \,.
\end{equation}
Set
\begin{equation} \label{eq:defrhoj}
\rho=\sum_{j=1}^l \rho_j \omega_j\,.
\end{equation}
Since $\b_j$ is a multiple of a simple root, we have
\begin{equation}
\rho_j=\frac{1}{2} \left( m_{\b_j}+ \frac{m_{\b_j/2}}{2}\right)\,.
\end{equation}
For an arbitrary $\b \in \Sigma_*^+$, we set
\begin{equation}
\label{eq:wrhob}
\wrhob=\frac{1}{2} \left( m_{\b}+ \frac{m_{\b/2}}{2}\right)\,.
\end{equation}
Notice that $\wrhob=\rho_j=\rho_{\b_j}$ if $\b=\b_j$, but $\wrhob\neq \rho_\b$ in general.

 In the following, we denote by $\norm{\cdot}$ the $W$-invariant norm on $\mathfrak a$ which is induced by the inner product $\inner{\cdot}{\cdot}$. The same notation will also be employed for the corresponding norm on $\mathfrak a^*$. 
If $\l=\Re\l+i\Im\l\in \mathfrak a_\C^*$ with $\Re\l, \Im\l\in\mathfrak a^*$, then we set $\norm{\l}^2=\norm{\Re\l}^2+\norm{\Im\l}^2$.

We set 
\begin{equation}
\label{eq:Omega}
\Omega=\max_{j=1,\dots,l} \|\omega_j\|\,.
\end{equation}

The choice of $\Pi_*$ as a basis for $\mathfrak a^*$ (and $\mathfrak a_\C^*$) is related to the fact that 
$\mu \in P^+$ if and only if
$$\mu=\sum_{j=1}^l \mu_j \omega_j \qquad \text{with $\mu_j \in \Z^+$\,, $j=1,\dots,l$}\,.$$
We shall write for such an element:
\begin{equation}
|\mu|=\mu_1+\dots +\mu_l\,.
\end{equation}

If $\mu \in P$, then the exponential $e^\mu: A_\C \to \C$ defined by $e^{\mu}(h):= e^{\mu(\log h)}$ is
single valued. The $e^\mu$ with $\mu \in P$ are the algebraic characters of $A_\C$, and their 
$\C$-linear span coincides with the ring of regular functions $\C[A_\C]$ on the affine algebraic
variety $A_\C$. The set $A^{\rm reg}_\C := \{h \in  A_\C : \text{$e^{2\a(h)}= 1$
for all $\a \in \Sigma$}\}$ consists of the regular points of $A_\C$ for the action of $W$.
The algebra $\C[A^{\rm reg}_\C]$ of regular functions on $A^{\rm reg}_\C$
is the subalgebra of the quotient field of $\C[A_\C]$ generated by $\C[A_\C]$ and by
$1/(1-e^{-2\a})$ for $\a \in \Sigma^+$.

\subsection{$c$-functions}
\label{section:c}
For $\a \in \Sigma^+$ define
\begin{equation} \label{eq:calpha}
c_\alpha(\l)=
\frac{\Gamma\big(\l_\a+\frac{m_{\a/2}}{4}\big)}{\Gamma\big(\l_\a+\frac{m_{\a/2}}{4}+\frac{m_{\a}}{2}\big)}\,.
\end{equation}
Set $\Gamma^*(x)=\Gamma(1-x)^{-1}$ and let $c^*_\a$ be given by the same formula as $c_\a$ but 
with $\Gamma^*$ in place of $\Gamma$, i.e. 
\begin{equation} \label{eq:cstaralpha}
c^*_\alpha(\l)=
\frac{\Gamma\big(1-\l_\a-\frac{m_{\a/2}}{4}-\frac{m_{\a}}{2}\big)}{\Gamma \big(1-\l_\a-\frac{m_{\a/2}}{4}\big)}\,.
\end{equation}
Then, by the classical relation $\Gamma(z)\Gamma(1-z)=\frac{\pi}{\sin(\pi z)}$, we obtain
\begin{equation} \label{eq:calpha-star}
c^*_\alpha(\l)=S_\a(\l) c_\a(\l)
\end{equation}
where
\begin{equation} \label{eq:Salpha}
S_\a(\l)=\frac{\sin\big(\pi(\l_\a+\frac{m_{\a/2}}{4})\big)}
{\sin\big(\pi(\l_\a+\frac{m_{\a/2}}{4}+\frac{m_{\a}}{2})\big)}\,.
\end{equation}
Define (see e.g. \cite[(3.4.2) and (3.5.2)]{HS94})
\begin{align}
 \label{eq:wtc}
\wt c(\l)&=\prod_{\a\in\Sigma^+} c_\a(\l)\,,\\
\label{eq:wtcstar}
\wt c\,^*(\l)&=\prod_{\a\in\Sigma^+} c^*_\a(\l)\,,
\end{align}
and set
\begin{align}
\label{eq:c}
c(\l)&=c_{\rm HC} \; \wt c(\l)\,,\\
\label{eq:cstar}
c^*(\l)&=c_{\rm HC}^* \; \wt c\,^*(\l)\,,
\end{align}
where $c_{\rm HC}$ and $c_{\rm HC}^*$ are normalizing constants so that $c(\rho)=1$ and 
$c^*(-\rho)=1$. 
Recall the classical relation  $\sqrt{\pi}\;\Gamma(2z)=2^{2z-1} 
\;\Gamma(z) \Gamma\left(z + \frac{1}{2}\right)$ and that $\l_{\a/2}=2\l_\a$. Then we can write the functions $c$ and $c^*$ as
\begin{align} 
c(\l)&=c_{\rm HC} \, \prod_{\b \in \Sigma_*^+} \frac{\Gamma(2\l_\b)}{\Gamma\big(2\l_\b+\frac{m_{\b/2}}{4}\big)}\,
\frac{\Gamma\big(\l_\b+\frac{m_{\b/2}}{4}\big)}{\Gamma\big(\l_\b+\frac{m_{\b/2}}{4}+\frac{m_\b}{2}\big)}
\notag \\
\label{eq:c-Sigmastar}
&=c'_{\rm HC} \, \prod_{\b \in \Sigma_*^+} 
\frac{2^{-2\l_\b}\Gamma(2\l_\b)}{\Gamma\big(\l_\b+\frac{m_{\b/2}}{4}+\frac{1}{2}\big) \Gamma\big(\l_\b+\frac{m_{\b/2}}{4}+\frac{m_\b}{2}\big)}
\end{align}
and, similarly,
\begin{align}
c^*(\l)&=c^*_{\rm HC}  \, \prod_{\b \in \Sigma_*^+} 
\frac{\Gamma\big(1-2\l_\b-\frac{m_{\b/2}}{2}\big) \Gamma\big(1-\l_\b-\frac{m_{\b/2}}{4}-\frac{m_\b}{2}\big)}{\Gamma(1-2\l_\b)\Gamma\big(1-\l_\b-\frac{m_{\b/2}}{4} \big)}\\
\label{eq:cstar-Sigmastar}
&=(c^*_{\rm HC})' \, \prod_{\b \in \Sigma_*^+} 
\frac{\Gamma\big(-\l_\b-\frac{m_{\b/2}}{4}+\frac{1}{2}\big) \Gamma\big(1-\l_\b-\frac{m_{\b/2}}{4}-\frac{m_\b}{2}\big)}{2^{2\l_\b}\Gamma(1-2\l_\b)}\,.
\end{align}
In (\ref{eq:c-Sigmastar}) and (\ref{eq:cstar-Sigmastar}) we have
\begin{equation} 
c'_{\rm HC}=c_{\rm HC}  \prod_{\b \in \Sigma_*^+} \big( \pi^{1/2} 2^{1-m_{\b/2}/2}\big)
\qquad\text{and} \qquad
(c^*_{\rm HC})'=c^*_{\rm HC}  \prod_{\b \in \Sigma_*^+} \big( \pi^{-1/2} 2^{-m_{\b/2}/2}\big)\,.
\end{equation}

\subsection{Heckman-Opdam's hypergeometric functions}
Let $S(\mathfrak a_\C)$ denote the symmetric algebra over $\mathfrak a_\C$ considered as the space
of polynomial functions on $\mathfrak a^*_\C$. Every $p \in S(\mathfrak a_\C)$ defines a 
constant-coefficient differential operator $\partial(p)$ on $A_\C$ and on $\mathfrak a_\C$ such that 
$\partial(x)$ is the directional derivative in the direction of $x$ for all $x \in \mathfrak a$. 
The algebra of the differential operators $\partial(p)$ with $p \in S(\mathfrak a_\C)$ 
will also be indicated by $S(\mathfrak a_\C)$.
We denote by $\D(A^{\rm reg}_\C):=\C[A^{\rm reg}_\C] \otimes S(\mathfrak a_\C)$ 
the algebra of differential operators on $A_\C$ with coefficients in $\C[A^{\rm reg}_\C]$.
Let $L_A$ denote the Laplace operator on $A$ and set 
  \begin{equation}
    \label{eq:Laplm}
  L_m:=L_A+\sum_{\a \in \Sigma^+} m_\a \,\frac{1+e^{-2\a}}{1-e^{-2\a}} \;
     \partial(A_\a)\,.
  \end{equation}
Heckman and Opdam proved in \cite{HOpd1} that,  for the triple $(\frak a,\Sigma,m)$,  
the commutant $\D(\frak a,m,\Sigma):=\{Q \in \D(\Hreg)^W: L_mQ=Q_m\}$
of $L_m$ in $\D(\Hreg)^W$ 
is a commutative algebra which plays the role the commutative 
algebra of the radial components on $A^+$ of the 
invariant differential operators on a Riemannian symmetric space of  
noncompact type. For geometric triples $(\mathfrak a, \Sigma, m)$ the operator 
$L_m$ coincides with the radial component on $A^+$ of the Laplace operator on a Riemannian symmetric space of noncompact type.  
The algebra $\D(\frak a,m,\Sigma)$ can be constructed algebraically.
Indeed one has
\begin{equation*}
  \D(\frak a,m,\Sigma):=\{D_p: p \in \polya^W\},
\end{equation*}
where the differential operator $D_p$ can be explicitly given in terms of 
Cherednik operators (also called trigonometric Dunkl operators); see \cite{OpdamActa} or \cite{HeckBou}.

Let $\l\in\frak a^*_\C$ be fixed. The system of differential equations
\begin{equation}
  \label{eq:hypereq}
  D_p \varphi=p(\l)\varphi, \qquad p \in \polya^W,
\end{equation}
is called the \emph{hypergeometric system of differential equations with spectral parameter
$\l$} associated with the data $(\frak a,\Sigma,m)$.
The \emph{hypergeometric function of spectral
parameter $\l$} is the unique analytic $W$-invariant function
$F_\l(a)$ on $A$ which satisfies the system of
differential equations (\ref{eq:hypereq}) and which is normalized
by $F_\l(e)= 1$. Here $e=\exp 0$. In the geometric case, the function $F_\l$
agrees with Harish-Chandra's (elementary) spherical function of spectral
parameter $\l$. 

\begin{Ex}[The rank-one case] \label{ex:Frankone}
The rank-one case corresponds to triples $(\mathfrak a, \Sigma, m)$ in which
$\frak a$ is one dimensional. Then the set $\Sigma^+$ consists
at most of two elements: $\b$ and, possibly, $\b$/2. Fix $H_0 \in \mathfrak a$ such that $\b(H_0)=1$ and normalize the inner product $\inner{\cdot}{\cdot}$ on $\mathfrak a$ so that $\inner{H_0}{H_0}=1$. Then, in the notation of Section
\ref{subsection:root-systems}, $H_\b/2=A_\b=H_0$ and $\inner{\b}{\b}=\inner{A_\b}{A_\b}=1$. We identify $\mathfrak a$ and $\mathfrak a^* $ with $\R$ (and their complexifications $\mathfrak a_\C$ and $\mathfrak a_\C^*$ with $\C$)
by identifying $H_\b$ and $\b$ with $1$. Hence $\l=\l_\b \b \in \mathfrak a_\C^*$ is identified with $\l_\b \in \C$ and 
$H \in \mathfrak a_\C$ with $\b(H)/2 \in \C$. In the following we shall use the simplified notation $\l \in \C$ instead of 
$\l_\b \in \C$.
We also identify $A$ and $\mathfrak a\equiv \R$ by means of the 
exponential map. 
The Weyl chamber $\frak a^+$ coincides with the half-line $]0,+\infty[$.
The Weyl group $W$ reduces to $\{-1,1\}$ acting on $\R$ and $\C$ by multiplication.
The algebra $\D(\mathfrak a, \Sigma, m)$ is generated by $L_m$, and the hypergeometric differential
system with spectral parameter $\l \in \C$ is equivalent to the single Jacobi differential equation.
Heckman-Opdam's hypergeometric function $F_\l$ coincides with the Jacobi function of the first kind
\begin{equation*}
  F_\l(\exp H)=
\hyper{\tfrac{1}{2}\big(\tfrac{m_{\b/2}}{2}+m_\b\big)+\l}{\tfrac{1}{2}\big(\tfrac{m_{\b/2}}{2}+m_\b\big)-\l}{{\tfrac{m_{\b/2}+m_{\b}+1}{2}}}
{-\sinh^2 \tfrac{\b(H)}{2}}\,.
\end{equation*}
\end{Ex}

Schapira proved in \cite{Schapira} that $F_\l$ is real and strictly positive for $\l\in\frak a^*$. Moreover
\begin{equation} \label{eq:Schapira}
|F_\l|\leq F_{\Re\l}\,, \qquad \l \in \frak a_\C^*\,.
\end{equation}

For every $\l\in\mathfrak a_\C^*$ the hypergeometric function $F_\l$ extends holomorphically as a $W$-invariant function on the domain $\exp(2\Omega_\pi)$ in $A_\C$, where
\begin{equation}
\label{eq:Omegapi}
\Omega_\pi=\{H\in \mathfrak a_\C: \text{$|\a(\Im H)|<\pi/2$ for all $\a\in\Sigma$}\}\,.
\end{equation}
An elementary proof of this fact was given by J. Faraut at the conference ``Harmonic Analysis on Complex Homogeneous Domains and Lie Groups'', Rome, May 17–19, 2001. Faraut's argument has been reproduced in \cite[p. 26]{BOP}.

We shall need the following estimates of the holomorphically extended hypergeometric functions. 
Recall the constant $\Omega$ from (\ref{eq:Omega}).

\begin{Lemma}
\label{lemma:estFl}
There is a constant $C>0$ so that
\begin{equation}
\label{eq:OpdamEstimates}
|F_\l(\exp H)|\leq C  e^{-\min_{w \in W} \Im(w\l(H_2))+\max_{w\in W} \Re(w\l(H_1))}
\end{equation}
for all $\l \in \mathfrak a_\C^*$ and all
$H=H_1+iH_2 \in \overline{\Omega_\pi}$ with $H_1, H_2 \in \mathfrak a$.
In particular:
\begin{enumerate}
\thmlist
\item
For all $\l \in \mathfrak a^*_+ + i\mathfrak a^*$ and $H \in \mathfrak a$ we have
$$|F_\l(\exp H)|\leq C  e^{\Omega\|H\|(\sum_{j=1}^l \Re\l_j)}$$
\item
For all  $H\in \overline{\Omega_\pi}$ and $\l\in \mathfrak a^*$ we have
$$|F_\l(\exp H)|\leq C  e^{\|\Im H\|\|\Im \l\|}\,.$$
\end{enumerate}
\end{Lemma}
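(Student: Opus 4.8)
The heart of the lemma is the first estimate~(\ref{eq:OpdamEstimates}); the two particular cases then drop out by specialization. I would use two tools. The first is Schapira's bound~(\ref{eq:Schapira}): applied to a real argument $H\in\mathfrak a$ it gives $|F_\l(\exp H)|\le F_{\Re\l}(\exp H)$, reducing the real-argument instance of~(\ref{eq:OpdamEstimates}) to the growth of $F_{\Re\l}$ for a real spectral parameter. The second is the Harish-Chandra expansion of $F_\l$, which produces the correct exponential behaviour in the complex directions of $H$. Since the system~(\ref{eq:hypereq}) is built from $W$-invariant $p\in\polya^W$, one has $p(\l)=p(w\l)$ and hence $F_\l=F_{w\l}$ for every $w\in W$; together with the $W$-invariance of $F_\l$ in the space variable, of the set $\overline{\Omega_\pi}$, and of the exponent in~(\ref{eq:OpdamEstimates}) (a maximum and a minimum over $W$), this lets me assume throughout that $H=H_1+iH_2$ has $H_1$ in the closed positive chamber $\overline{\mathfrak a^+}$.

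On $A^+$ (and a complex neighbourhood of it in $\Hreg$) the function $F_\l$ admits the Harish-Chandra expansion
\begin{equation*}
F_\l(\exp H)=\sum_{w\in W}\wt c(w\l)\,\Phi_{w\l}(\exp H),\qquad
\Phi_\nu(\exp H)=e^{(\nu-\rho)(H)}\sum_{\kappa}\Gamma_\kappa(\nu)\,e^{-\kappa(H)},
\end{equation*}
the inner sum running over the nonnegative integral combinations $\kappa$ of the positive roots, with $\Gamma_0=1$. Since $\rho$ is real, a direct computation gives $\Re\big((\nu-\rho)(H)\big)=(\Re\nu-\rho)(H_1)-(\Im\nu)(H_2)$ and $|e^{-\kappa(H)}|=e^{-\kappa(H_1)}$, so for $H_1\in\mathfrak a^+$ the factor $e^{-\rho(H_1)}\le1$ is harmless and term by term
\begin{equation*}
|\Phi_{w\l}(\exp H)|\le e^{\Re(w\l(H_1))-\Im(w\l(H_2))}\sum_{\kappa}|\Gamma_\kappa(w\l)|\,e^{-\kappa(H_1)}.
\end{equation*}
Bounding $\Re(w\l(H_1))\le\max_{w}\Re(w\l(H_1))$ and $-\Im(w\l(H_2))\le-\min_{w}\Im(w\l(H_2))$ produces exactly the exponential of~(\ref{eq:OpdamEstimates}) (the chamber walls $H_1\in\overline{\mathfrak a^+}\setminus\mathfrak a^+$ follow by continuity); what remains is to bound the prefactors by a constant independent of $\l$ and of $H\in\overline{\Omega_\pi}$.

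The main obstacle is precisely this uniformity, and it has two sources. First, the coefficients $\Gamma_\kappa(\nu)$ and the convergence of $\sum_\kappa|\Gamma_\kappa(\nu)|e^{-\kappa(H_1)}$ must be controlled uniformly in $\nu$; here I would invoke Opdam's polynomial bounds on the $\Gamma_\kappa$ and exploit that $H\in\overline{\Omega_\pi}$ satisfies $|\a(\Im H)|\le\pi/2$, strictly inside the domain $2\Omega_\pi$ of holomorphy, so that the series still converges with room to spare and the bound persists up to $|\a(\Im H)|=\pi/2$. Second, and more seriously, the meromorphic coefficient $\wt c(w\l)$ is not bounded: it has poles along the root hyperplanes, and near these walls the individual terms blow up although their sum $F_\l$ stays regular, so the termwise estimate is not uniform there. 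I would dispose of this as in the uniform estimates of Opdam~\cite{OpdamActa}: the termwise bound already gives~(\ref{eq:OpdamEstimates}) for $\l$ bounded away from the walls, and since for each fixed $H$ the map $\l\mapsto F_\l(\exp H)$ is entire and $W$-invariant, a maximum-principle argument in $\l$ (the apparent singularities of the expansion being removable) extends the bound to all of $\mathfrak a_\C^*$ with the same constant. This is the step I expect to be the most delicate.

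Finally, the two particular cases are immediate. For (a) take $H\in\mathfrak a$, so $H_2=0$ and only the real exponent survives; writing $\Re\l=\sum_{j}(\Re\l_j)\omega_j$ with $\Re\l_j\ge0$ one has $\max_{w}\Re(w\l(H))=\max_{w}\inner{w\Re\l}{H}\le\norm{\Re\l}\,\norm{H}\le\Omega\,(\sum_{j=1}^{l}\Re\l_j)\,\norm{H}$, where $\Omega$ is the constant~(\ref{eq:Omega}), which is the asserted bound. For (b) one isolates the imaginary contribution by taking $\l\in i\mathfrak a^*$, so that $\Re\l=0$ and the exponent reduces to $-\min_{w}\Im(w\l(H_2))=\max_{w}\inner{w\Im\l}{-H_2}\le\norm{\Im\l}\,\norm{H_2}=\norm{\Im\l}\,\norm{\Im H}$, again by Cauchy--Schwarz and the $W$-invariance of $\norm{\cdot}$.
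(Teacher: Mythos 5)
Your derivations of parts (a) and (b) from (\ref{eq:OpdamEstimates}) are correct and essentially what the paper does: it reduces to $H\in\overline{\mathfrak a^+}$ by $W$-invariance and bounds $\Re\l(H)=\sum_{j}\Re\l_j\,\omega_j(H)\le\Omega\norm{H}\sum_j\Re\l_j$, and reads (b) off directly from (\ref{eq:OpdamEstimates}); like you, one should understand the hypothesis in (b) as $\l\in i\mathfrak a^*$ (only the imaginary contribution is kept), since for real nonzero $\l$ and unrestricted $H_1$ the stated bound cannot hold. The divergence is entirely in the main estimate (\ref{eq:OpdamEstimates}): the paper does not prove it but cites Opdam \cite{OpdamActa}, Proposition 6.1(2) and Theorem 3.15, whereas you attempt to reconstruct it from the Harish-Chandra expansion. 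Note also that (\ref{eq:Schapira}) plays no role in the paper's argument and cannot help here, as it concerns only real $H$.

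That reconstruction has a genuine gap beyond the one you flag. You correctly identify the non-uniformity in $\l$ near the singular hyperplanes of $\wt c$, and a maximum-principle (Phragm\'en--Lindel\"of) repair there is indeed classical. But the remark that ``the chamber walls $H_1\in\overline{\mathfrak a^+}\setminus\mathfrak a^+$ follow by continuity'' is not correct: the issue is uniformity of the constant, not continuity of $F_\l$. The coefficients $\Gamma_\kappa(\nu)$ grow in $\kappa$, so $\sum_\kappa|\Gamma_\kappa(w\l)|e^{-\kappa(H_1)}$ blows up as $H_1$ approaches a wall of $\mathfrak a^+$ (at $H_1=0$ the series diverges outright), and no termwise bound survives in that regime. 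Yet (\ref{eq:OpdamEstimates}) makes its strongest assertion exactly there: for $H_1=0$ and $\l\in\mathfrak a^*$ it claims $|F_\l(\exp(iH_2))|\le C$ uniformly in $\l$ and in $H_2$ with $|\a(H_2)|\le\pi/2$, which is precisely the uniform boundedness statement that Opdam proves by entirely different means (Cherednik operators and differential inequalities for the non-symmetric eigenfunctions), not via the Harish-Chandra series. So even after the maximum principle in $\l$, your argument only yields (\ref{eq:OpdamEstimates}) for $H_1$ bounded away from the walls. The course the paper takes --- quote Opdam for (\ref{eq:OpdamEstimates}) and reserve the elementary computation for the specializations (a) and (b) --- is the one to follow.
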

\begin{proof} (see \cite[Lemma 5.1]{OP-Ramanujan-JFA})
The estimates (\ref{eq:OpdamEstimates}) are due to Opdam; see \cite{OpdamActa}, Proposition 6.1(2) and Theorem 3.15. For (a), we can suppose by $W$-invariance that $H \in \overline{\mathfrak a^+}$.
In this case, for $\l \in \mathfrak a^*_+ + i\mathfrak a^*$, we have
\begin{equation*}
0\leq \Re\l(H)=\sum_{j=1}^l \Re\l_j \omega_j(H) \leq \Omega \|H\| \big(\sum_{j=1}^l \Re\l_j\big)\,.
\end{equation*}
Part (b) follows immediately from (\ref{eq:OpdamEstimates}).
\end{proof}

\subsection{The hypergeometric Fourier transform}
\label{subsection:hypergeomFourier}

Let $da$ denote a fixed normalization of the Haar measure on $A$. We associate with the triple $(\mathfrak a,\Sigma,m)$
the measure $d\mu(a)=\mu(a)\, da$ on $A$, where
\begin{equation}
 \label{eq:mu}
\mu(a)=\prod_{\a \in\Sigma^+} \big|e^{\a(H)}-e^{-\a(H)}\big|^{m_\a}\,, \qquad a=\exp(H)\,.
\end{equation}
Notice that when $(\fa, \Sigma, m)$ comes from a Riemannian symmetric space $G/K$,
then $d\mu$ is the component along $A$ of the Haar measure on $G$ with respect to the Cartan decomposition $G=KAK$.

The \emph{hypergeometric Fourier transform}
$\mathcal Ff=\wt f$ of a sufficiently regular $W$-invariant functions on $A$ is the $W$-invariant function 
on $i\mathfrak a^*$ defined by  
\begin{equation} \label{eq:ftilde}
(\mathcal F f)(\l)=\wt {f}(\l)=\int_A f(a)F_{-\l}(a)\; d\mu(a)\,.
\end{equation}

The Plancherel theorem states that the hypergeometric Fourier transform $\mathcal F$
is an isometry of $L^2(A,d\mu)^W$ onto $L^2(i\frak a^*, |W|^{-1}|c(\l)|^{-2}d\l)^W$.
Here $|W|$ denotes the order of the Weyl group $W$ and $c$ is the $c$-function (\ref{eq:c}).
Moreover, $\mathcal F$ has the following inversion formula, which holds for instance if 
$f \in L^p(A,d\mu)^W$, with $1 \leq p \leq 2$, and $\mathcal F f \in L^1(i\mathfrak a^*, |c(\l)|^{-2}d\l)^W$: 
for almost all $a \in A$ we have
\begin{equation}
\label{eq:inversionsphericalG}
f(a)=\frac{1}{|W|} \,\int_{i\mathfrak a^*} \widetilde{f}(\l) F_\l(a) \; \frac{d\l}{c(\l)c(-\l)}\,.
\end{equation}
See \cite[Theorem 5.4]{NPP}. 

Finally, we shall need the $W$-invariant $L^p$-Schwartz space isomorphism. 
For $1\leq p\leq 2$, the $L^p$-Schwartz space $\mathcal S^p(A)^W$ is the set of
all  $W$-invariant $C^\infty$ functions $f$ on $A$ such that for each $N\in\N_0$ and
$q\in \polya$,
\begin{equation}
\label{eq:Schwartzp}
\sup_{H\in \mathfrak a} (1+\norm{H})^N F_0(\exp H)^{-\frac 2p}|\partial(q)f(\exp H)|<\infty.
\end{equation}
Notice that $C_c^\infty(A)^W\subset \mathcal S^p(A)^W\subset L^p(A , d\mu)^W$.
Hence $\mathcal S^p(A)^W$ is dense in $L^p(A,d\mu)^W$. Moreover, $\mathcal S^p(A)^W$ is a
Frech\'et space with respect to the seminorms defined by the left-hand side of (\ref{eq:Schwartzp}).
Set $\epsilon_p=\frac 2p-1$. Let
$C(\epsilon_p\rho)$ be the convex hull in $\mathfrak a^*$ of the set $\{\epsilon_p
w\rho: w\in W\}$, and let $\fa_{\epsilon_p}^*=C(\epsilon_p\rho)+ i \fa^*$.
Notice that, for $p=1$, the set $\fa_{\epsilon_1}^*=C(\rho)+i\fa^*$ is precisely the
set of parameters $\l$ for which $F_\l$ is bounded; see \cite[Theorem 4.2]{NPP}.
Let $\mathcal S(\fa_{\epsilon_p}^\ast)^W$ be the set
of all $W$-invariant functions
$g:\fa_{\epsilon_p}^\ast\rightarrow \C$ which are
holomorphic in the interior of  $\fa_{\epsilon_p}^\ast$,
continuous on $\fa_{\epsilon_p}^\ast$ and satisfy  for
all $r\in \N_0$ and $s\in\polya$
\begin{equation}
 \label{eq:Schwartzpa}
\sup_{\lambda\in \fa_{\epsilon_p}^\ast}
(1+\norm{\lambda})^r\left|\partial(s)g(\lambda)\right|<\infty.
\end{equation}
 Then $\mathcal S(\fa_{\epsilon_p}^\ast)^W$ is a Fr\'{e}chet
algebra under pointwise multiplication and with the topology
induced by the seminorms defined by the left-hand side of (\ref{eq:Schwartzpa}).
Notice that when $p=2$, this space reduces to the usual 
space of Schwartz functions on $i\fa^\ast$.
The Schwartz space isomorphism theorem states that the hypergeometric Fourier 
transform is a topological isomorphism between $\mathcal S^p(A)^W$ and 
$\mathcal S(\fa_{\epsilon_p}^\ast)^W$. This
theorem was proved in \cite[Theorem 4.1]{Schapira} for
the case $p=2$ and in \cite[Theorem 5.6]{NPP} for the general case. 
(The proof is in fact an easy adaptation of Anker's method for the geometric case \cite{AnkerJFA}.)

\section{Jacobi polynomials and Jacobi series}
\label{section:Jacobi}

\subsection{Jacobi polynomials}
Recall that  $\C[A_\C]$ denotes the space of finite $\C$-linear
combinations of elements $e^{\mu}$ with $\mu \in P$, and let $\C[A_\C]^W$ be 
its subspace of $W$-invariant elements. 
Set 
$$\delta(m,t):=\prod_{\alpha \in \Sigma^+}
|e^{i\alpha(H)}-e^{-i\alpha(H)}|^{m_\alpha}$$ 
for $t=\exp(iH)$ with $H \in \mathfrak a$.
Define an inner product $\inner{\cdot}{\cdot}_m$ on $\C[A_\C]^W$ by
$$\inner{f}{g}_m:=\int_T f(t) \overline{g(t)} \; \delta(m,t) \,dt\,,$$
where $dt$ is the normalized Haar measure on $T$.

Recall from (\ref{eq:Pplus})  the lattice $P^+$ of positive restricted weights.  
The \emph{orbit sums}
\begin{equation}\label{eq:Mmu}
M_\mu:=\sum_{\nu \in W\mu} e^\nu
\end{equation}
form a basis of $\C[A_\C]^W$ as $\mu$ varies in $P^+$ because each
$W$-orbit in $P$ intersects $P^+$ in exactly one point. The
\emph{Jacobi polynomial} $P_\mu$ is the exponential polynomial
\begin{equation}\label{eq:Jacobipoly}
P_\mu:=\sum_{\mu\ge\nu \in P^+} c_{\mu\nu} M_\nu
\end{equation}
where the coefficients $c_{\mu\nu}$ are defined by the following
conditions, cf.\ \cite[\S 1.3]{HS94}, or \cite[\S 11]{MacdonaldSLC}:
\begin{enumerate}
\renewcommand{\theenumi}{\alph{enumi}}
\renewcommand{\labelenumi}{(\theenumi)}
\item
$c_{\mu\mu}=1$\,,
\item
$\inner{P_\mu}{M_\nu}_m=0$ for all $\nu \in P^+$ with
$\nu < \mu$.
\end{enumerate}
Condition (b) turns out to be equivalent to the fact that $P_\mu$ satisfies the 2nd order differential equation on $T$:
\begin{equation}
L_m^T P_\mu=-\inner{\mu+2\rho}{\mu} P_\mu
\end{equation}
where 
$$L_m^T=L_T+\sum_{\a\in\Sigma^+} m_\a \; \frac{1+e^{-2\a}}{1-e^{-2\a}}\;  \partial(iA_\a)\,,$$
and $L_T$ is the Euclidean Laplace-Beltrami operator on the compact torus $T$.
The fact that $P_\mu$ is an eigenfunction of $L_m^T$ yields recursion relations for the coefficients $c_{\mu\nu}$.
One can then deduce that $c_{\mu\nu}>0$ for all $\mu, \nu \in P^+$ with $\nu \leq \mu$. 
See \cite[pp. 34--35]{MacdonaldSLC}.

Observe that, by definition,  $P_\mu(t)$ extends holomorphically
to $A_\C$ as a function of $t$.
Moreover $\{P_\mu \mid \mu \in P^+\}$ is a basis for $\C[A_\C]^W$ which is orthogonal
with respect to the inner product
$\inner{\cdot}{\cdot}_m$ (cf.  \cite[Corollary 1.3.13]{HS94}).

The relation between Jacobi polynomials and Heckman-Opdam's hypergeometric functions 
is given by the following lemma.

\begin{Lemma}
Let $\mu \in P^+$. Then for all $a \in A$
\begin{equation} \label{eq:FandP}
F_{\mu+\rho}(a)=c(\mu+\rho) P_\mu(a)\,.
\end{equation}
Formula (\ref{eq:FandP}) provides a holomorphic extension of $F_{\mu+\rho}$ to $A_\C$.
Moreover, for all $\mu \in P^+$ and $w \in W$, we have
\begin{equation} \label{eq:symmP}
c(w(\mu+\rho)-\rho)P_{w(\mu+\rho)-\rho}=c(\mu)P_\mu\,.
\end{equation}
\end{Lemma}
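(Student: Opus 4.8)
The plan is to recognize $P_\mu|_A$ as a constant multiple of $F_{\mu+\rho}$ by checking that both satisfy the hypergeometric system (\ref{eq:hypereq}) with $\l=\mu+\rho$, and then to fix the constant by evaluating at $e$. First I would recall from Heckman--Opdam theory (\cite{HS94}, \cite{OpdamActa}) that the Jacobi polynomial $P_\mu$, as a $W$-invariant element of $\C[\Hreg]$, is a joint eigenfunction of the whole commuting family $\{D_p:p\in\polya^W\}$, with $D_pP_\mu=p(\mu+\rho)P_\mu$; the second-order relation $L_m^TP_\mu=-\inner{\mu+2\rho}{\mu}P_\mu$ recalled above is the instance of this for the quadratic generator, and the $\rho$-shift in the eigenvalue is exactly what singles out $\mu+\rho$ as the spectral parameter. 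Because the $D_p$ are holomorphic differential operators on $\Hreg$, restricting to $A$ shows that $P_\mu|_A$ is a $W$-invariant analytic solution of (\ref{eq:hypereq}) with $\l=\mu+\rho$. Invoking the uniqueness built into the definition of $F_{\mu+\rho}$, together with $P_\mu(e)\neq 0$ (which holds since $P_\mu=\sum_{\nu\leq\mu}c_{\mu\nu}M_\nu$ with all $c_{\mu\nu}>0$ and $M_\nu(e)>0$), I conclude that the two solutions are proportional: $F_{\mu+\rho}=P_\mu(e)^{-1}P_\mu$.

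It remains to identify the constant as $P_\mu(e)=c(\mu+\rho)^{-1}$. Here I would appeal to the closed evaluation of Jacobi polynomials at the identity, which, combined with the normalization $c(\rho)=1$ that fixes $c_{\rm HC}$ in (\ref{eq:c}), gives precisely $P_\mu(e)=c(\mu+\rho)^{-1}$; the case $\mu=0$ is a useful consistency check, since there $P_0=1$, $F_\rho=1$ and $c(\rho)=1$. This proves (\ref{eq:FandP}). The holomorphic extension is then automatic: $c(\mu+\rho)$ is a scalar and $P_\mu\in\C[A_\C]^W$ is a regular, hence everywhere holomorphic, function on $A_\C$, so the right-hand side of (\ref{eq:FandP}) provides the holomorphic continuation of $F_{\mu+\rho}$ from $A$ to all of $A_\C$.

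For the symmetry (\ref{eq:symmP}) the decisive input is the invariance of the hypergeometric function under the Weyl group acting on the spectral parameter, namely $F_\l=F_{w\l}$ for all $w\in W$: indeed $F_\l$ and $F_{w\l}$ solve the same system (\ref{eq:hypereq}), because $p(w\l)=p(\l)$ for every $p\in\polya^W$, and both are normalized to equal $1$ at $e$, so uniqueness forces them to coincide. Applying this with $\l=\mu+\rho$ gives $F_{\mu+\rho}=F_{w(\mu+\rho)}$. Substituting (\ref{eq:FandP}) on the left-hand side, and its natural extension to the index $w(\mu+\rho)-\rho$ on the right (this element need not lie in $P^+$, and $P_{w(\mu+\rho)-\rho}$ is to be understood through the same proportionality with $F_{w(\mu+\rho)}$), then yields (\ref{eq:symmP}).

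I expect the only substantive obstacle to be the importation of the two Heckman--Opdam facts used above: that $P_\mu$ is a joint eigenfunction of the full algebra $\{D_p\}$, so that it genuinely solves (\ref{eq:hypereq}) and not merely the single equation for $L_m^T$, and the explicit evaluation $P_\mu(e)=c(\mu+\rho)^{-1}$. Both are available in the literature for arbitrary positive multiplicity functions, so they enter as citations; the remaining ingredients---uniqueness of the normalized solution, positivity of $P_\mu(e)$, holomorphy of exponential polynomials, and the Weyl symmetry of the spectral parameter---are routine.
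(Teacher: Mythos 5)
Your argument is correct and takes essentially the same route as the paper, which simply cites \cite[(4.4.1)]{HS94} for (\ref{eq:FandP}) and derives (\ref{eq:symmP}) from (\ref{eq:FandP}) together with the $W$-invariance of $\l\mapsto F_\l$; what you add is the standard proof behind that citation (the joint eigenfunction property of $P_\mu$ for the full family $\{D_p\}$, uniqueness of the normalized $W$-invariant analytic solution, and Opdam's evaluation $P_\mu(e)=\wt c(\rho)/\wt c(\mu+\rho)=c(\mu+\rho)^{-1}$). One detail to watch: the substitution you describe literally yields $c\big(w(\mu+\rho)\big)P_{w(\mu+\rho)-\rho}=c(\mu+\rho)P_\mu$, i.e.\ with $c$ evaluated at the unshifted spectral parameters, which differs from the printed form of (\ref{eq:symmP}) by a $\rho$-shift in the arguments of $c$; since the paper's own proof is the same one-line derivation, this looks like an inconsistency in the statement rather than in your reasoning, and your caveat about how to interpret $P_\nu$ for $\nu=w(\mu+\rho)-\rho\notin P^+$ is well placed.
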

\begin{proof}
Formula (\ref{eq:FandP}) is  \cite[(4.4.1)]{HS94}, and (\ref{eq:symmP}) is a consequence of (\ref{eq:FandP}) and the 
$W$-invariance of $F_\l$ in $\l \in \mathfrak a_\C^*$.
\end{proof}

\begin{Ex}[The rank-one case] \label{ex:Frankone-compact}
We keep the notation and identifications of Example \ref{ex:Frankone}. 
In particular, we have the identification of $P^+$ with $\Z^+$
so that $\mu=n\b \in P^+$ corresponds to $n\in\Z^+$. The polynomial 
\begin{equation}
F_{n\b+\rho}(\exp H)=
\hyper{n+m_\b+\tfrac{m_{\b/2}}{2}}{-n}{\tfrac{m_{\b/2}+m_\b+1}{2}}{\tfrac{1-\cos\b(H)}{2}}\,.
\end{equation}
is related to the classical \emph{Jacobi polynomial} $\Jpoly{n}{a}{b}{x}$ 
(see e.g. \cite[10.8(16)]{Er2}) by
$$
\binom{n+a}{n} F_{n\b+\rho}(\exp H)=\Jpoly{n}{a}{b}{\cos\beta(H)}
$$
where
$$a=(m_{\b/2}+m_\b-1)/2 \qquad\text{and} \qquad b=(m_\b-1)/2\,.$$
As a special instance, one finds the \emph{symmetric Jacobi polynomials} 
\begin{equation}
\label{eq:symmJacobi}
X_n^{(m-1)/2}(x)=\hyper{-n}{n+m}{\tfrac{m+1}{2}}{\tfrac{1-x}{2}}\,,
\end{equation}
which are constant multiples of the Jacobi polynomials $\Jpoly{n}{a}{a}{x}$ with $a=\frac{m-1}{2}$.
The polynomials  (\ref{eq:symmJacobi}) are hypergeometric functions corresponding 
 to a rank-one reduced root system $\Sigma$ with $m_\b=m$.
By selecting special values of $m$, one obtains specific classes of special polynomials, 
such as the \emph{Legendre polynomials}
$$P_n(x)=\Jpoly{n}{0}{0}{x}=X_n^{-1/2}(x)=\hyper{-n}{n+1}{1}{\tfrac{1-x}{2}}\,,$$
the \emph{Tchebichef polynomials of first kind}
$$T_n(x)=X_n^{-1/2}(x)=\hyper{-n}{n}{\tfrac{1}{2}}{\tfrac{1-x}{2}}\,,$$
and the \emph{Tchebichef polynomials of second kind}
$$U_n(x)=(n+1)X_n^{1/2}(x)=(n+1)\hyper{-n}{n+2}{\tfrac{3}{2}}{\tfrac{1-x}{2}}.$$
See \cite[formulas 10.10(3), 10.11(24), 10.11(25)]{Er2}
\end{Ex}  

\begin{Prop} \label{prop:estJacobi}
For $\mu \in P^+$,  the Jacobi polynomial $P_\mu(a)$ is real valued and positive for $a \in A$.
For all $h=ta$ with $t \in T$, $a \in A$, we have $|P_\mu(h)|\leq P_\mu(a)$.
Moreover, for all $a=\exp(H) \in \overline{A^+}$,
\begin{equation} \label{eq:est-P}
e^{\mu(H)} \leq P_\mu(a) \leq c(\mu+\rho)^{-1} e^{\mu(H)}\,.
\end{equation}
Similarly, the hypergeometric function $F_{\mu+\rho}(a)$ is real valued and positive for $a \in A$.
For all $h=ta$ with $t \in T$, $a \in A$, we have $|F_{\mu+\rho}(h)|\leq F_{\mu+\rho}(a)$.
Furthermore, for all $a=\exp(H) \in \overline{A^+}$,
\begin{equation} \label{eq:est-F-P}
 c(\mu+\rho) e^{\mu(H)} \leq F_{\mu+\rho}(a) \leq e^{\mu(H)}\,.
\end{equation}
\end{Prop}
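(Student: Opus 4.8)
The plan is to reduce everything to the orbit-sum expansion (\ref{eq:Jacobipoly}), $P_\mu=\sum_{\mu\ge\nu}c_{\mu\nu}M_\nu$, together with the two facts recalled just above it: that all coefficients satisfy $c_{\mu\nu}>0$ (with $c_{\mu\mu}=1$) and that $F_{\mu+\rho}=c(\mu+\rho)P_\mu$ by (\ref{eq:FandP}). I would treat $P_\mu$ first and then transfer every statement to $F_{\mu+\rho}$ through this last identity; the only input needed about $c(\mu+\rho)$ is its positivity, which comes for free by evaluating (\ref{eq:FandP}) at the identity $e$. Indeed, since $M_\nu(e)=|W\nu|>0$, the expansion gives $P_\mu(e)=\sum_{\mu\ge\nu}c_{\mu\nu}|W\nu|>0$, and then $F_{\mu+\rho}(e)=1$ forces $c(\mu+\rho)=P_\mu(e)^{-1}>0$. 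This also records the key identity $P_\mu(e)=c(\mu+\rho)^{-1}$, which will pin down the sharp constant in the upper bound.

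First I would establish the qualitative claims for $P_\mu$. Writing $a=\exp H$ with $H\in\mathfrak a$, each character satisfies $e^\nu(a)=e^{\nu(H)}>0$, so each orbit sum $M_\nu(a)=\sum_{\sigma\in W\nu}e^{\sigma(H)}$ is a sum of positive reals; since the $c_{\mu\nu}$ are positive, $P_\mu(a)$ is real and strictly positive. For the domination $|P_\mu(h)|\le P_\mu(a)$ with $h=ta$, I would use the polar coordinates $h=\exp(H_1+iH_2)$, $a=\exp H_1$, so that $|e^\sigma(h)|=e^{\sigma(H_1)}=e^\sigma(a)$; the triangle inequality applied term by term in each $M_\nu$ and then in the positive combination $\sum c_{\mu\nu}M_\nu$ gives the bound at once.

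The heart of the matter is the two-sided estimate (\ref{eq:est-P}) on $\overline{A^+}$. For the lower bound I would simply drop all but the top orbit sum: since $c_{\mu\mu}=1$ and the remaining terms are positive, $P_\mu(a)\ge M_\mu(a)\ge e^{\mu(H)}$, the last step because $e^{\mu(H)}$ is one of the positive summands of $M_\mu(a)$. For the upper bound I would normalize and look at $e^{-\mu(H)}P_\mu(\exp H)=\sum_{\mu\ge\nu}c_{\mu\nu}\sum_{\sigma\in W\nu}e^{(\sigma-\mu)(H)}$. The point is that for $H\in\overline{\mathfrak a^+}$ every exponent here is non-positive: if $\nu\le\mu$ in $P^+$ and $\sigma\in W\nu$, then $\sigma(H)\le\nu(H)\le\mu(H)$, the first inequality because a dominant weight maximizes its $W$-orbit on the closed chamber and the second because $\mu-\nu$ is a non-negative combination of positive roots. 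Hence each $e^{(\sigma-\mu)(H)}\le 1$, and by positivity of the coefficients $e^{-\mu(H)}P_\mu(\exp H)\le\sum_{\mu\ge\nu}c_{\mu\nu}|W\nu|=P_\mu(e)=c(\mu+\rho)^{-1}$, which is exactly the upper bound.

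Finally I would transfer everything to $F_{\mu+\rho}$ via $F_{\mu+\rho}=c(\mu+\rho)P_\mu$ with $c(\mu+\rho)>0$: positivity of $F_{\mu+\rho}$ on $A$, the domination $|F_{\mu+\rho}(h)|=c(\mu+\rho)|P_\mu(h)|\le c(\mu+\rho)P_\mu(a)=F_{\mu+\rho}(a)$, and the estimate (\ref{eq:est-F-P}), obtained by multiplying (\ref{eq:est-P}) by $c(\mu+\rho)$ so that the factors $c(\mu+\rho)$ and $c(\mu+\rho)^{-1}$ cancel cleanly against the bounds. I do not expect a serious obstacle: the whole argument rests on the positivity $c_{\mu\nu}>0$ quoted from the recursion in \cite{MacdonaldSLC} and on the elementary order relation $\sigma(H)\le\nu(H)\le\mu(H)$ on the closed chamber. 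The only step requiring a little care is the evaluation at $e$, which simultaneously identifies the sharp constant $c(\mu+\rho)^{-1}$ and yields $c(\mu+\rho)>0$ without invoking any deeper property of the $c$-function; one could alternatively appeal to Schapira's positivity (\ref{eq:Schapira}) for $F_{\mu+\rho}$ with real spectral parameter, but it is not needed here.
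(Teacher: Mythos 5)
Your proposal is correct and follows essentially the same route as the paper: positivity of the coefficients $c_{\mu\nu}$ in the orbit-sum expansion, the evaluation $P_\mu(e)=\sum_{\mu\ge\nu}c_{\mu\nu}|W\nu|=c(\mu+\rho)^{-1}$ obtained from $F_{\mu+\rho}(e)=1$, the chamber inequality $w\nu(H)\le\nu(H)\le\mu(H)$ for the two-sided bound, and the transfer to $F_{\mu+\rho}$ via (\ref{eq:FandP}). The only cosmetic difference is that you spell out the triangle-inequality argument for $|P_\mu(ta)|\le P_\mu(a)$, which the paper leaves implicit.
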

\begin{proof}
Notice first that $c(\mu+\rho)>0$. Recall the defining formula (\ref{eq:Jacobipoly}) for $P_\mu$ and that $c_{\mu\nu}>0$ for all $\nu < \mu$ and $c_{\mu\mu}=1$.  By (\ref{eq:Mmu}), we have $M_\nu(e)=|W\nu|=|W|/|W_\nu|$, 
where $W_\nu=\{w \in W:w\nu=\nu\}$. Hence (\ref{eq:FandP}) and (\ref{eq:Jacobipoly}) yield
$$
c(\mu+\rho)  \sum_{\mu \geq \nu \in P^+} c_{\mu\nu} \tfrac{|W|}{|W_\nu|}=c(\mu+\rho) P_\mu(e)=F_{\mu+\rho}(e)=1\,.
$$
Moreover, $w\nu(H)\leq \nu(H) \leq \mu(H)$ for all $w \in W$, $\nu \in P^+$ with $\nu \leq \mu$ and 
$H \in \overline{\mathfrak a^+}$. So for $a=\exp(H) \in \overline{A^+}$ we have
$$0 \leq M_\nu(a) \leq \tfrac{|W|}{|W_\nu|} \, e^{\nu(H)} \leq \tfrac{|W|}{|W_\nu|} \,  e^{\mu(H)}\,.$$
Therefore 
$$
P_\mu(a) \leq \sum_{\mu \geq \nu \in P^+} c_{\mu\nu} \tfrac{|W|}{|W_\nu|} \,  e^{\mu(H)} = c(\mu+\rho) ^{-1}
e^{\mu(H)}\,.$$
Finally, 
$$
P_\mu(a)  = \sum_{\mu \geq \nu \in P^+} c_{\mu\nu} \sum_{\eta \in W\nu} e^{\eta(H)} \geq c_{\mu\mu} e^{\mu(H)}=e^{\mu(H)}\,.
$$
This proves the first part of the proposition. The results for $F_{\mu+\rho}$  are then an immediate consequence of  (\ref{eq:FandP}).
\end{proof}

\begin{Rem}
By using the non-symmetric hypergeometric functions, Schapira proved, more generally, that $F_\l(a)$ is real valued and positive for all $\l \in \mathfrak a^*$ and $a \in A$. See \cite[Lemma 3.1 and Corollary 3.1]{Schapira}. The estimate 
(\ref{eq:est-F-P}) is classical for the special case of spherical functions on Riemannian symmetric spaces of the compact type.
See e.g. \cite[Proposition IV.5.2]{Faraut}.
\end{Rem}

\subsection{The norm of the Jacobi polynomials}

Let $\wt c$ and $\wt c\,^*$ be as in Section \ref{section:c}. 
Set $\normm{f}=\innerm{f}{f}^{1/2}$ for the $L^2$-norm of
$f \in \C[A_\C]^W$. The $L^2$-norm of $P_\mu$ has been computed by Opdam. It is given by 
\begin{align} \label{eq:normPnu}
\normm{P_\mu}^2&=|W| \; \frac{\wt c\,^*(-\mu-\rho)}{\wt c(\mu+\rho)} \\
&=|W| \; \prod_{\a \in \Sigma^+} 
\frac{\Gamma\big(\mu_\a+\rho_\a+\frac{m_{\a/2}}{4}+\frac{m_{\a}}{2}\big)}{\Gamma\big(\mu_\a+\rho_\a+\frac{m_{\a/2}}{4}\big)}\,
\frac{\Gamma\big(1+\mu_\a+\rho_\a-\frac{m_{\a/2}}{4}-\frac{m_{\a}}{2}\big)}{\Gamma\big(1+\mu_\a+\rho_\a-\frac{m_{\a/2}}{4}\big)}\,.
\end{align}
See \cite{OpdamShift} or \cite[Theorem 3.5.5]{HS94}. 
Moreover, by \cite[(3.5.14)]{HS94}, 
\begin{equation} \label{eq:cardW}
|W|=\prod_{\a \in \Sigma^+} 
\frac{\big(\rho_\a+ \frac{m_{\a/2}}{4} +\frac{m_{\a}}{2}\big)}{\big(\rho_\a+ \frac{m_{\a/2}}{4}\big)}\,.
\end{equation}
Formula (\ref{eq:normPnu}) for $\mu=0$ yields
\begin{equation}\label{eq:Idelta}
I_\delta:=\int_T |\delta(m,t)| \, dt=\norm{P_0}^2_m
=|W|\, \frac{\wt c\,^*(-\rho)}{\wt c(\rho)}\,.
\end{equation}
It follows from  (\ref{eq:FandP}) and (\ref{eq:normPnu}) that the $L^2$-norm of $F_{\mu+\rho}$ is
\begin{align} 
\normm{F_{\mu+\rho}}^2&=|W| c_{\rm HC}^2 \; \wt c\,^*(-\mu-\rho) \wt c(\mu+\rho) \\
&=I_\delta \; \frac{ \wt c\,^*(-\mu-\rho) \wt c(\mu+\rho)}{\wt c\,^*(-\rho)\wt c^(\rho)}\\
&=I_\delta \; c^*(-\mu-\rho) c(\mu+\rho)\,.
\label{eq:norm-Fmu}
\end{align}

Let  $d(\mu)$ be the function on $P^+$ defined by means of Vretare's formula, see \cite[Theorem 9.10, p. 321]{He3}:
$$d(\mu)=\left.\frac{c(\l-\mu)c(-\l+\mu)}{c(\l)c(-\l)}\right|_{\l=\mu+\rho}=\frac{1}{ c(\mu+\rho)} 
\left.\frac{c(-\l+\mu)}{c(-\l)}\right|_{\l=\mu+\rho}\,,$$
where we have used that $c(\rho)=1$.
Notice that, by (\ref{eq:calpha-star}),
\begin{equation*}
\frac{c(-\l+\mu)}{c(-\l)}=\prod_{\a\in\Sigma^+} \frac{S_\a(-\l)}{S_\a(-\l+\mu)} \; \frac{c^*(-\l+\mu)}{c^*(-\l)}=
\frac{c^*(-\l+\mu)}{c^*(-\l)}
\end{equation*}
since $S_\a(-\l)=S_\a(-\l+\mu)$ for $\mu_\a \in \Z$.
Hence, since $c^*(-\rho)=1$, we have
\begin{equation} \label{eq:d-mu}
d(\mu)=\frac{1}{c(\mu+\rho) c^*(-\mu-\rho)}\,.
\end{equation}
Formula (\ref{eq:norm-Fmu}) becomes therefore
\begin{equation} \label{eq:normF-d}
\normm{F_{\mu+\rho}}^2=I_\delta \;  \frac{1}{d(\mu)}\,.
\end{equation}

\begin{Rem}
In the case of Riemannian symmetric spaces of the compact type $U/K$,  (\ref{eq:normF-d}) reduces to the 
classical formula (consequence of Schur's orthogonality relations) relating the $L^2$-norm of the spherical 
function $\psi_\mu=F_{\mu+\rho}$ on $U/K$ to the dimension $d(\mu)$ of the corresponding spherical 
representation, see for instance \cite[p. 146]{Faraut}. 
Indeed, let $du$ and $dt$ be respectively the invariant probability measures on $U$ and $T$.  
Then, for every $K$-biinvariant continuous function $f$ on $U$, we have
$\int_U f(u)\; du=C \int_T f(t) |\delta(m,t)| dt$ where $C=\big[\int_T  
|\delta(m,t)| dt\big]^{-1}=I_\delta^{-1}$. 
Hence  (\ref{eq:normF-d}) gives
$$\int_U |\psi_\mu(u)|^2 \, du= I_\delta^{-1} \int_T |\psi_\mu(t)|^2 |\delta(m,t)| dt=I_\delta^{-1} 
\normm{\psi_\mu}^2=d(\mu)^{-1}\,.$$
 \end{Rem}

\subsection{The Jacobi transform and Jacobi series}
\label{section:JacobiSeries}

The \emph{(normalized) Jacobi transform} of $f \in L^2(T)^W$ is the function 
$\widehat{f}:P^+ \to \C$ defined by
$$\widehat{f}(\mu):=I_\delta^{-1} \inner{f}{P_\mu}_m
=I_\delta^{-1} \int_T f(t)P_\mu(t^{-1})\, \delta(m,t)\; dt\,.$$
Here we have used the property that $\overline{P_\mu(t)}=P_\mu(t^{-1})$ for $t \in T$.
Observe that, by (\ref{eq:FandP}), 
\begin{equation}
c(\rho+\mu)\widehat{f}(\mu)=I_\delta^{-1}\innerm{f}{F_{\mu+\rho}}\,.
\end{equation}
The inversion formula is given by the \emph{Jacobi series}
\begin{align}
f&=I_\delta \sum_{\mu \in P^+} \widehat{f}(\mu) \dfrac{P_\mu}{\|P_\mu\|^2_m}\\
&=\sum_{\mu \in P^+} d(\mu) c(\mu+\rho) \widehat{f}(\mu) F_{\mu+\rho}   
 \label{eq:invJacobi}
\end{align}
with convergence in the sense of $L^2$. In (\ref{eq:invJacobi}) we have used (\ref{eq:FandP}) and (\ref{eq:normF-d}).

Set $B=\{H \in \mathfrak a: \norm{H}<1\}$. 
For $\varepsilon >0$ the $U$-invariant domain in $A_\C$ defined by $D_\varepsilon=T\exp(\varepsilon B)$ is a 
$W$-invariant neighborhood of $T$ in $A_\C$. In the following lemma we study the holomorphic extension of Jacobi series.
It generalizes a result proven by Lassalle in the geometric case; see \cite{Lassalle} or \cite[Proposition V.2.3]{Faraut}.

\begin{Lemma} \label{lemma:estimatesLassalle}
Let $G:P^+ \to \C$ and let $\varepsilon >0$. The Jacobi series 
\begin{equation} \label{eq:Jacobi-series-F}
\sum_{\mu \in P^+}  d(\mu) G(\mu) F_{\mu+\rho}(x)
\end{equation}
converges normally on compact subsets of $D_\varepsilon$ if and only if there is a constant $C>0$ and so that
\begin{equation}\label{eq:Gcond} 
|G(\mu)|\leq C e^{-\varepsilon \norm{\mu}}
\end{equation}
for all $\mu \in P^+$. In this case, its sum is a $W$-invariant holomorphic function on $D_\varepsilon$. 
\end{Lemma}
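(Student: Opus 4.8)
\emph{Approach.} The plan is to reduce everything to comparing the general term $d(\mu)G(\mu)F_{\mu+\rho}$ with explicit exponentials in $\norm{\mu}$, using the two-sided estimates of Proposition \ref{prop:estJacobi} together with the polynomial growth of $d$. By (\ref{eq:FandP}) each $F_{\mu+\rho}$ extends to a $W$-invariant holomorphic function on all of $A_\C$, so every partial sum of (\ref{eq:Jacobi-series-F}) is $W$-invariant and holomorphic on $D_\varepsilon$. Consequently, once normal convergence on the compact subsets of $D_\varepsilon$ is established, the sum is automatically $W$-invariant and holomorphic there by the Weierstrass convergence theorem. Thus the statement reduces entirely to characterizing the normal convergence, and the last assertion of the lemma comes for free.

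\emph{The basic estimate.} First I would pin down, up to the factor $c(\mu+\rho)$, the size of $\sup_K|F_{\mu+\rho}|$ for $K$ compact in $D_\varepsilon$. Any such $K$ is contained in $K_{\varepsilon'}:=T\exp(\overline{\varepsilon'B})$ for some $0<\varepsilon'<\varepsilon$. Since $A_\mu\in\overline{\mathfrak a^+}$ for $\mu\in P^+$ (because $\a(A_\mu)=\inner{\a}{\mu}\ge 0$ for $\a\in\Sigma^+$), the linear form $H\mapsto\mu(H)=\inner{H}{A_\mu}$ attains its maximum $\varepsilon'\norm{\mu}$ on $\overline{\varepsilon'B}$ at the point $H_\mu:=\varepsilon'A_\mu/\norm{\mu}\in\overline{\mathfrak a^+}$. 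Combining the bound $|F_{\mu+\rho}(ta)|\le F_{\mu+\rho}(a)$ and the $W$-invariance of $F_{\mu+\rho}$ with the two-sided estimate (\ref{eq:est-F-P}) and Cauchy--Schwarz, one obtains
$$c(\mu+\rho)\,e^{\varepsilon'\norm{\mu}}\;\le\;\sup_{x\in K_{\varepsilon'}}|F_{\mu+\rho}(x)|\;\le\;e^{\varepsilon'\norm{\mu}}\,.$$
The lower bound comes from evaluating at the real point $\exp H_\mu\in K_{\varepsilon'}$, and the upper bound from moving $H$ into $\overline{\mathfrak a^+}$ and estimating $\mu(H)\le\norm{\mu}\norm{H}\le\varepsilon'\norm{\mu}$.

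\emph{Sufficiency and polynomial growth.} Assume $|G(\mu)|\le Ce^{-\varepsilon\norm{\mu}}$. On $K_{\varepsilon'}$ the general term is then dominated by $C\,d(\mu)\,e^{-(\varepsilon-\varepsilon')\norm{\mu}}$. By (\ref{eq:d-mu}) we have $d(\mu)=[c(\mu+\rho)c^*(-\mu-\rho)]^{-1}$, and a standard application of Stirling's formula to the explicit Gamma-product formulas (\ref{eq:c-Sigmastar}) and (\ref{eq:cstar-Sigmastar}) shows that $d(\mu)$, as well as $c^*(-\mu-\rho)$, grows at most polynomially in $\norm{\mu}$. Since moreover the number of $\mu\in P^+$ with $\norm{\mu}\le R$ is $O(R^l)$, the dominating series $\sum_\mu d(\mu)e^{-(\varepsilon-\varepsilon')\norm{\mu}}$ converges. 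This gives normal convergence on each $K_{\varepsilon'}$, hence on every compact subset of $D_\varepsilon$.

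\emph{Necessity and the main obstacle.} Conversely, normal convergence on $K_{\varepsilon'}$ forces the terms to be uniformly bounded, say $d(\mu)|G(\mu)|\sup_{K_{\varepsilon'}}|F_{\mu+\rho}|\le C_{\varepsilon'}$ for all $\mu$. Feeding in the lower bound of the basic estimate and using (\ref{eq:d-mu}) again yields
$$|G(\mu)|\;\le\;C_{\varepsilon'}\,[d(\mu)c(\mu+\rho)]^{-1}e^{-\varepsilon'\norm{\mu}}\;=\;C_{\varepsilon'}\,c^*(-\mu-\rho)\,e^{-\varepsilon'\norm{\mu}}\,,$$
valid for every $\varepsilon'<\varepsilon$. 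The polynomial factor $c^*(-\mu-\rho)$ is harmless and can be absorbed into the exponential at the cost of an arbitrarily small loss in the rate. The only genuinely delicate point is then to pass from these bounds, available for all $\varepsilon'<\varepsilon$, to the sharp rate $\varepsilon$ in the statement: this is the same phenomenon as in the Cauchy--Hadamard description of the domain of convergence of a power series, and is handled by letting $\varepsilon'\nearrow\varepsilon$. I expect this rate-matching --- rather than any of the (routine, Stirling-type) growth estimates --- to be the main obstacle.
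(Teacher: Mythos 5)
Your route coincides with the paper's: both directions rest on the two-sided bound of Proposition \ref{prop:estJacobi} (evaluated at the extremal real point $\exp(\varepsilon' A_\mu/\norm{\mu})$ for the lower bound, and reduced to $\overline{\mathfrak a^+}$ by $W$-invariance for the upper bound), on the exhaustion of $D_\varepsilon$ by the compacta $T\exp(r\overline{B})$, $r<\varepsilon$, and on the polynomial growth of $d(\mu)$ for sufficiency. That part of your argument is correct and is exactly what the paper does.

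The gap is in the closing step of the necessity direction — the very step you flag as the main obstacle. From term-wise boundedness on $T\exp(\varepsilon'\overline{B})$ you obtain $|G(\mu)|\le C_{\varepsilon'}\,c^*(-\mu-\rho)\,e^{-\varepsilon'\norm{\mu}}$ with a constant depending on $\varepsilon'$, and ``letting $\varepsilon'\nearrow\varepsilon$'' then yields only the family of bounds at every rate $\varepsilon'<\varepsilon$, not the single bound (\ref{eq:Gcond}) at rate $\varepsilon$: this is precisely the Cauchy--Hadamard situation you invoke, where convergence on $|z|<R$ gives $\limsup_n|a_n|^{1/n}\le 1/R$ but never $|a_n|\le C R^{-n}$ (compare $G(\mu)=(1+\norm{\mu})e^{-\varepsilon\norm{\mu}}$, which satisfies every bound at rate $\varepsilon'<\varepsilon$ but not (\ref{eq:Gcond})). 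Your plan to ``absorb the polynomial factor into the exponential at an arbitrarily small loss'' compounds the problem by adding a second $\varepsilon'$-dependent loss. The paper closes the step the opposite way: $c^*(-\mu-\rho)$ is not merely of polynomial growth but \emph{bounded} on $P^+$ (each Gamma-ratio factor behaves like $(1+\mu_\a)^{-m_\a/2}$ with $m_\a>0$), so one gets $|G(\mu)|\le C\,e^{-\delta\norm{\mu}}$ with $C$ \emph{independent} of $\delta<\varepsilon$, and taking the infimum over $\delta$ delivers (\ref{eq:Gcond}) exactly; for this to work one must also keep the term-wise bound extracted from normal convergence (the ``$1\ge$'' in the paper's proof) uniform in $\delta$. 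Without some such uniformity of the constant in the rate, the sharp inequality (\ref{eq:Gcond}) is not reachable from your intermediate estimates.
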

\begin{proof} 
Suppose that $G$ satisfies (\ref{eq:Gcond}). It is enough to prove the normal convergence of (\ref{eq:Jacobi-series-F}) on compact subsets of the form $\overline{D_r}=
T\exp(r\overline{B})$ where $\overline{B}=\{H \in \mathfrak a: \norm{H} \leq 1\}$ and $0<r<\varepsilon$.
Let $h=t\exp(rH)$ with $H \in \overline{B} \cap \overline{\mathfrak a^+}$. By Proposition \ref{prop:estJacobi}
we have 
$$|F_{\mu+\rho}(h)| \leq F_{\mu+\rho}(\exp(rH)) \leq e^{r\mu(H)} \leq e^{r\norm{\mu}}\,.$$
This estimate extends to $\overline{D_r}$ by $W$-invariance. 
Hence 
$$|d(\mu)G(\mu)F_{\mu+\rho}(h)|\leq d(\mu) e^{-(\varepsilon -r)\norm{\mu}} \,,$$
which implies the convergence of (\ref{eq:Jacobi-series-F}) since $d(\mu)$ has polynomial growth.

Conversely, suppose that the series (\ref{eq:Jacobi-series-F}) converges normally on the compact subsets of $D_\varepsilon$.
Let $0<\varepsilon'<\delta<\varepsilon$. By the normal convergence of  (\ref{eq:Jacobi-series-F}) on $D_\delta$, we have for all $H \in \exp(\delta B) \subset D_\delta$:
$$1 \geq |d(\mu) G(\mu)| F_{\mu+\rho}(\exp H)=\Big|\frac{G(\mu)}{c^*(-\mu-\rho)}\Big| 
P_{\mu}(\exp H).$$
Hence, by (\ref{eq:est-P}), for all $H \in \exp(\delta B)$,
$$|G(\mu)|\leq |c^*(-\mu-\rho)| e^{\mu(H)}\,.$$
Taking $H=-(\delta/\|\mu\|)A_\mu$, we obtain that 
$$|G(\mu)|\leq |c^*(-\mu-\rho)| e^{-\delta\|\mu\|} \leq C e^{-\varepsilon'\|\mu\|}$$
where $C$ is a constant independent of $\mu$ and $\varepsilon'$.
This implies (\ref{eq:Gcond}).
\end{proof}

\section{The function $d$}
\label{section:d}
We extend $d(\mu)$, $\mu \in P^+$, to a meromorphic function on $\mathfrak a_\C^*$  by means of 
(\ref{eq:d-mu}):
\begin{equation} \label{eq:d}
d(\l)=\frac{1}{c(\l+\rho) c^*(-\l-\rho)}
\end{equation}

When $(\mathfrak a,\Sigma,m)$ is geometric, $d$ coincides with the polynomial given by Weyl's 
dimension formula (written in terms of restricted roots). The polynomial nature of $d$ is precised by Lemma \ref{lemma:d} below.
In the following, we set 
\begin{equation} \label{eq:Pi}
\Pi(\l)=\prod_{\b\in \Sigma_*^+} \l_\b\,.
\end{equation} 
Recall also the notation $\wt\rho_\b =\tfrac{1}{2}\big(\tfrac{m_{\b/2}}{2}+m_\b\big)$ and the constant $L_\b$ from (\ref{eq:Lbeta})\,.

\begin{Lemma} \label{lemma:d}
\begin{enumerate}
\thmlist
\item We have 
\begin{align} \label{eq:d-rhoshift}
d(\l-\rho)&=\frac{1}{c(\l)c^*(-\l)} \notag\\
&=C_d \; \Pi(\l) \prod_{\b\in\Sigma_*^+}   
\frac{\Gamma\big( \l_\b +\frac{m_{\b/2}}{4}+\frac{1}{2}\big)
\Gamma( \l_\b +\wt \rho_\b)}
{\Gamma\big( \l_\b -\frac{m_{\b/2}}{4}+\frac{1}{2}\big)
\Gamma\big( \l_\b -\wt\rho_\b+1\big)}
\end{align}
where
\begin{equation} \label{eq:Cd}
C_d=\wt c(\rho) \wt c\,^*(-\rho)\, \prod_{\b \in \Sigma_*^+} 2^{m_{\b/2}}.
\end{equation}
\item If $(m_{\b/2})/2 \in \Z^+$ and $0\neq m_\b \in \Z^+$, then $d(\l)$ is a polynomial in 

$\l \in \mathfrak a_\C^*$. Explicitely, 
\begin{equation}
d(\l-\rho)=C_d 
\prod_{\b \in \Sigma_*^+} \Big(\l_\b 
\prod_{k=0}^{\frac{m_{\b/2}}{2}-1} \big[ \l_\b -\big( \tfrac{m_{\b/2}}{4}-\tfrac{1}{2} \big)+k\big]
\prod_{k=0}^{2 \wt \rho_\b-2} [ \l_\b -(\wt \rho_\b-1)+k]\Big).
\end{equation}
Here we adopt the convention that empty products are equal to $1$.
\item 
The function $d(\l)$ has polynomial growth. More precisely, set 
\begin{equation}\label{eq:M}
M=\sum_{\b\in \Sigma_*^+} (m_{\b/2}+m_\b)\,.
\end{equation}
Then, for every positive $\varepsilon>0$, there are constants $C_0$, $C_0'$ and $C_0''$ so that 
\begin{equation}\label{eq:est-d}
|d(\l-\rho)|\leq C_0 \prod_{\b \in \Sigma_*^+} (1+|\l_\b|)^{m_{\b/2}+m_\b} 
\leq C'_0 (1+\norm{\l})^M \leq C_0'' \prod_{j=1}^l (1+|\l_j|)^M
\end{equation}
for all $\l \in \mathfrak a_\C^*$ with $\max_{\b \in \Sigma_*^+}|\arg(\l_\b)| \leq \pi-\varepsilon$ 
and for all $\l \in \mathfrak a_\C^*$ with $\Re\l_\b \geq -L_\b+\varepsilon$ for all $\b \in 
\Sigma_*^+$. 
\end{enumerate}
\end{Lemma}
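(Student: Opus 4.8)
\emph{Part (a).} The plan is to read everything off the explicit product expansions (\ref{eq:c-Sigmastar}) and (\ref{eq:cstar-Sigmastar}) of $c$ and $c^*$, combined with elementary Gamma-function identities. By (\ref{eq:d}) we have $d(\l-\rho)=\big(c(\l)c^*(-\l)\big)^{-1}$, which is the first line of (\ref{eq:d-rhoshift}). First I would substitute $-\l$ for $\l$ in (\ref{eq:cstar-Sigmastar}) and multiply by (\ref{eq:c-Sigmastar}); in each $\b$-factor the powers $2^{-2\l_\b}$ then cancel, leaving
$$c(\l)c^*(-\l)=c'_{\rm HC}(c^*_{\rm HC})' \prod_{\b\in\Sigma_*^+} \frac{\Gamma(2\l_\b)}{\Gamma(1+2\l_\b)}\; \frac{\Gamma\big(\l_\b-\tfrac{m_{\b/2}}{4}+\tfrac12\big)\,\Gamma\big(\l_\b-\wrhob+1\big)}{\Gamma\big(\l_\b+\tfrac{m_{\b/2}}{4}+\tfrac12\big)\,\Gamma\big(\l_\b+\wrhob\big)}\,,$$
where I have used $\wrhob=\tfrac{m_{\b/2}}{4}+\tfrac{m_\b}{2}$ from (\ref{eq:wrhob}). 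Since $\Gamma(1+2\l_\b)=2\l_\b\,\Gamma(2\l_\b)$, taking reciprocals introduces a factor $2\l_\b$ per $\b$, hence the product $\Pi(\l)$ of (\ref{eq:Pi}) together with the inverted Gamma quotient appearing in (\ref{eq:d-rhoshift}). To pin down $C_d$ I would note that the reciprocal of $c'_{\rm HC}(c^*_{\rm HC})'$ equals $(c_{\rm HC}c^*_{\rm HC})^{-1}\prod_\b 2^{m_{\b/2}-1}$, which combined with the extra $\prod_\b 2$ from the factors $2\l_\b$ gives $\prod_\b 2^{m_{\b/2}}$; finally the normalizations $c(\rho)=1$ and $c^*(-\rho)=1$ from (\ref{eq:c})--(\ref{eq:cstar}) give $c_{\rm HC}=\wt c(\rho)^{-1}$ and $c^*_{\rm HC}=\wt c\,^*(-\rho)^{-1}$, so $(c_{\rm HC}c^*_{\rm HC})^{-1}=\wt c(\rho)\wt c\,^*(-\rho)$ and the constant is exactly (\ref{eq:Cd}).

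\emph{Part (b).} Under the hypotheses the two Gamma quotients in (\ref{eq:d-rhoshift}) have arguments differing by a nonnegative integer: the numerator-minus-denominator shift is $\tfrac{m_{\b/2}}{2}\in\Z^+$ for the first and $2\wrhob-1=\tfrac{m_{\b/2}}{2}+m_\b-1\geq 0$ for the second. Applying the identity $\Gamma(z+n)/\Gamma(z)=\prod_{k=0}^{n-1}(z+k)$ for $n\in\Z^+$ to each quotient turns it into a finite product of linear factors in $\l_\b$; reindexing with $-\tfrac{m_{\b/2}}{4}+\tfrac12=-\big(\tfrac{m_{\b/2}}{4}-\tfrac12\big)$ and $-\wrhob+1=-(\wrhob-1)$ reproduces the two displayed products. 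As $\Pi(\l)$ is a polynomial, so is $d(\l-\rho)$, and therefore $d(\l)$ is a polynomial in $\l$.

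\emph{Part (c).} Here the essential input is the uniform asymptotics of Gamma quotients: for fixed $a,b$ and any $\e>0$ there is $C>0$ with $\big|\Gamma(z+a)/\Gamma(z+b)\big|\leq C(1+|z|)^{\Re(a-b)}$, uniformly on the sector $|\arg z|\leq\pi-\e$ and on any half-plane $\Re z\geq -L+\e$ avoiding the poles of $\Gamma(z+a)$. Applied to the two quotients in (\ref{eq:d-rhoshift}) this gives exponents $\tfrac{m_{\b/2}}{2}$ and $2\wrhob-1$ in $|\l_\b|$; together with the exponent $1$ from $\Pi(\l)$ the total exponent per $\b$ is $m_{\b/2}+m_\b$, which is the first inequality of (\ref{eq:est-d}). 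The role of $L_\b$ from (\ref{eq:Lbeta}) is precisely to keep the arguments of the numerator Gamma factors bounded away from their poles when $\Re\l_\b\geq -L_\b+\e$. The last two inequalities of (\ref{eq:est-d}) are elementary: each $\l_\b$ is a fixed linear functional of $\l$, so $1+|\l_\b|\leq C(1+\norm{\l})$ yields the bound $(1+\norm{\l})^M$ with $M$ as in (\ref{eq:M}); and $\l=\sum_j\l_j\omega_j$ gives $1+\norm{\l}\leq C\prod_j(1+|\l_j|)$, whence the final bound. I expect the main obstacle to be making the Gamma-quotient estimate uniform over the entire regions, rather than merely asymptotic for large $|\l_\b|$: one must control the quotient near its poles and zeros, which is exactly what the parameter $L_\b$ is designed to do.
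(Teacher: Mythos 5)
Your proof is correct and follows essentially the same route as the paper's: part (a) by multiplying the product expansions (\ref{eq:c-Sigmastar}) and (\ref{eq:cstar-Sigmastar}) and tracking the constants (your bookkeeping for $C_d$, including the factor $\prod_{\b}2^{m_{\b/2}}$, checks out), part (b) via the identity $\Gamma(z+n)/\Gamma(z)=\prod_{k=0}^{n-1}(z+k)$ applied to the two Gamma quotients, and part (c) via the uniform polynomial bound for $\Gamma(z+a)/\Gamma(z-a+1)$ on sectors and on half-planes to the right of the poles, combined with the elementary comparisons between $|\l_\b|$, $\norm{\l}$ and the coordinates $\l_j$. No changes needed.
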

\begin{proof}
Part (a) is an immediate consequence of (\ref{eq:c-Sigmastar}) and (\ref{eq:cstar-Sigmastar}). 

Recall that, if $0\neq 2a \in \Z^+$, then
$$
\frac{\Gamma(\l_\b+a)}{\Gamma(\l_\b-a+1)}=\frac{\Gamma(\l_\b-a+1+(2a-1))}{\Gamma(\l_\b-a+1)}
=\prod_{k=0}^{2(a-1)} (\l_\b-a+1+k)\,.
$$
This proves (b) by taking 
$a=\tfrac{m_{\b/2}}{4}+\tfrac{1}{2}$ and $a=\wt\rho_\b$.

Finally, to prove (c), observe first that $|\l_\b|\leq \norm{\l}\norm{\b}^{-1}$ for all $\l\in\mathfrak a_\C^*$ and $\b \in \Sigma_*^+$. Hence, if $a>0$, then there is a constant $K\geq 0$ so 
that $(1+|\l_\a|)^a \leq K(1+\norm{\l})^a$ for all $\l \in \mathfrak a^*_\C$. Similarly, 
since $\l=\sum_ {j=1}^l \l_j \omega_j$, there is a constant $K_1>0$ so that 
$(1+\norm{\l}) \leq K_ 1 \prod_{j=1}^l (1+|\l_j|)$ for all $\l \in \mathfrak a^*_\C$.
Recall that
\begin{equation} \label{eq:asympt-ratio-gamma}
\lim_{|z|\to +\infty} \frac{\Gamma(z+a)}{\Gamma(z)} e^{-a\log z}=1 \qquad\text{for $|\arg z|\leq 
\pi -\varepsilon$\,,}
\end{equation}
where $a\in \C$ is fixed, $\log$ is the principal part of the logarithm, and $\varepsilon>0$; see e.g. 
\cite[Ch. IV, p. 151]{Titchmarsch}.
Let now $a>0$. Then for fixed constants $C_1>1$ and $\varepsilon>0$ there are constants $N_1>0$ and 
$C_2 \geq C_1$ (also depending on $a$) so that 
$$
\left| \frac{\Gamma(z+a)}{\Gamma(z-a+1)} \right|
\leq C_1 (1+|z-a+1|)^{2a-1} \leq  C_2 (1+|z|)^{2a-1}
$$
for all $z$ with $|z|\geq N_1$ and $|\arg z|\leq \pi-\varepsilon$.
The function  $ \frac{\Gamma(z+a)}{\Gamma(z-a+1)}$ is
holomorphic on $\Re z > -a$, so bounded on compact subsets of this domain. It is in particularly
bounded on $$\{z \in \C: \text{$|z|\leq N_1$ and $|\arg z| \leq \pi-\varepsilon$}\}$$ and (supposing without loss of generality that $\varepsilon<\pi/2$ so that the region is compact) on 
$$\{z \in \C: \text{$\Re z \geq -a +\varepsilon$ and $\pi \geq |\arg z| \geq \pi-\varepsilon$}\}\,.$$ It follows that there is a constant $C_3\geq C_2$ so that 
$$
\left| \frac{\Gamma(z+a)}{\Gamma(z-a+1)}\right| \leq  C_3 (1+|z|)^{2a-1}
$$
for all $z \in \C$ with $|\arg z|\leq \pi-\varepsilon$  and for all $z \in \C$ with $\Re z \geq -a+\varepsilon$.

Choose now $a=\tfrac{m_{\b/2}}{4}+\tfrac{1}{2}$. We get
$$
\left|\frac{\Gamma\big( \l_\b +\frac{m_{\b/2}}{4}+\frac{1}{2}\big)}
{\Gamma\big( \l_\b -\frac{m_{\b/2}}{4}+\frac{1}{2}\big)}\right| \leq  C'_3 (1+|\l_\b|)^{m_{\b/2}/2}
$$
for all $\l \in \mathfrak a_\C^*$ with $|\arg(\l_\b)|\leq \pi-\varepsilon$ and for all $\l \in \mathfrak a_\C^*$ 
with $\Re\l_\b \geq -\big( \tfrac{m_{\b/2}}{4}+\tfrac{1}{2}\big)+\varepsilon$. 
Choose then $a=\wt\rho_\b$. In this case, we obtain
$$
\left| \frac{\l_\b \Gamma( \l_\b +\wt \rho_\b)}{\Gamma( \l_\b-\wt\rho_\b+1)}\right|
 \leq  C_3'' (1+|\l_\b|)^{m_{\b/2}/2+m_\b-1} |\l_\b|
\leq  C_3'' (1+|\l_\b|)^{m_{\b/2}/2+m_\b}\,
 $$
for all $\l \in \mathfrak a_\C^*$ with $|\arg(\l_\b)|\leq \pi-\varepsilon$ and for all $\l \in \mathfrak a_\C^*$ 
with $\Re\l_\b \geq -\wt\rho_\b+\varepsilon$.
These estimates yield the claim for $d(\l-\rho)$.
\end{proof}

\begin{Rem}
By the infinitesimal classification of Riemannian symmetric spaces (see \cite{Araki} or \cite[Ch. X, Exercise F]{He1}), the condition 
in (b) is satified by all geometric triples $(\mathfrak a, \Sigma,m)$. This condition is also satisfied if $\Sigma$ is reduced
and $m_\b\in \Z$, for instance in the so-called even multiplicity case \cite{OPEven}. 
\end{Rem}

\section{The function $b$}
\label{section:b}
Let $b$ be the meromorphic function on $\mathfrak a_\C^*$ defined by the equality
\footnote{The constant $(i/2)^l$ of \cite[formulas (37) and (53)]{OP-Ramanujan-JFA} should be 
corrected as $2^{-l}$.}
\begin{equation}
\label{eq:b-first-formula}
\frac{b(\l)}{c(\l)c(-\l)}=2^{-l} d(\l-\rho)\; \prod_{j=1}^l \frac{1}{\sin\big(\pi(\l_j-\rho_j)\big)}
\end{equation}
Then 
\begin{equation}
\label{eq:b}
b(\l)=2^{-l}  \; \frac{c(-\l)}{c^*(-\l)}  \; \prod_{j=1}^l \frac{1}{\sin\big(\pi(\l_j-\rho_j)\big)}\,.
\end{equation}
Recall the functions $S_{\b/2}$ and $S_\b$ from (\ref{eq:Salpha}). We have for all $\b\in\Sigma_*^+$:
\begin{align*}
S_{\b/2}(\l)S_\b(\l)&=\frac{\sin(2\pi\l_\b)}{\sin\big(\pi(2\l_\b+\frac{m_{\b/2}}{2})\big)}
\frac{\sin\big(\pi\big(\l_\b+\frac{m_{\b/2}}{4}\big)\big)}{\sin(\pi(\l_\b+\wt\rho_\b))}\\
&=\frac{\sin(\pi\l_\b)\cos(\pi\l_\b)}
{\cos\big(\pi\big(\l_\b+\frac{m_{\b/2}}{4}\big)\big)\sin(\pi(\l_\b+\wt\rho_\b))} \,.
\end{align*}
It follows then from (\ref{eq:calpha-star}) that 
\begin{equation} \label{eq:bexplicit-one}
b(\l)=C_b \prod_{\b \in \Sigma_*^+} \frac{\cos\big(\pi\big(\l_\b-\frac{m_{\b/2}}{4}\big)\big)}
{\cos(\pi\l_\b)\sin(\pi\l_\b)} \prod_{\b \in \Sigma_*^+ \setminus \{\b_1,\dots,\b_l\}} \sin\big(\pi(\l_\b-\wt\rho_\b)\big).
\end{equation}
where 
\begin{equation}\label{eq:Cb}
C_b=2^{-l} \,\frac{c_{\rm HC}}{c^*_{\rm HC}}
= 2^{-l} \,\frac{\wt c\,^*(-\rho)}{\wt c(\rho)}\,.
\end{equation}
\begin{Rem}
\label{rem:b}
By classification (see e.g. \cite[Ch. X, Exercice F.4]{He1}), the parity of the geometric root multiplicities are distinguished in four different cases. They are reported in the following table together with 
the corresponding parity of $\wt \rho_\b$:

\smallskip

\begin{center}
\renewcommand{\arraystretch}{1.3}
\begin{tabular}{|c||c|c|c|}
\hline
     & $\frac{m_{\b/2}}{2}$ & $m_\b$ & $\wt\rho_\b=\frac{1}{2} \big(\frac{m_{\b/2}}{2}+m_\b\big)$ \\[.2em]
\hline
(a) & 0 & $\in 2\Z$ & $\in \Z$ \\[.1em]
\hline
(b) & 0 & $\in 2\Z+1$ & $\in \Z+\frac{1}{2}$ \\[.2em]
\hline
(c) & $\in 2\Z$ & $\in 2\Z$ + 1 & $\in \Z+\frac{1}{2}$ \\[.2em]
\hline
(d) & $\in 2\Z+1$ & $\in 2\Z+1$ & $\in \Z$ \\[.1em]
\hline
\end{tabular}
\end{center}
\medskip

\noindent The corresponding values of $b(\l)$, which have been determined in \cite[Remark 4.5]{OP-Ramanujan-JFA}, are
\begin{align}
\label{eq:bexplicit}
b(\l)=& C'_b \Big(\prod_{\stackrel{\b \in\Sigma_*^+\setminus\{\b_1,\dots,\b_l\}}{\text{cases (b) or (c)}}}   \cot(\pi\l_\b)\Big)
\Big( \prod_{\stackrel{\b \in\Sigma_*^+\setminus\{\b_1,\dots,\b_l\}}{\text{case (d)}}}  
\tan(\pi\l_\b) \Big)  \notag \\ 
&\times \Big( \prod_{\stackrel{j\in\{1,\dots,l\}}{\text{cases (a),(b) or (c)}}}   \frac{1}{\sin(\pi\l_j)}\Big)
\Big( \prod_{\stackrel{j\in\{1,\dots,l\}}{\text{case (d)}}} \!   \frac{1}{\cos(\pi\l_j)} \Big)
\end{align}
where $C'_b=\pm C_b$ and the sign depends on the parity of the multiplicities.
\end{Rem}

\begin{Lemma} \label{lemma:Pib-holo}
Set 
\begin{equation} \label{eq:TPi}
T_\Pi:=\{\l \in \mathfrak a_\C^*: \text{$|\Re\l_\b| <1/2$ for all $\b\in \Sigma_*^+$}\}\,.
\end{equation}
Let $\Pi(\l)$ be as in (\ref{eq:Pi}). Then $\Pi(\l)b(\l)$ is holomorphic on $T_\Pi$.
\end{Lemma}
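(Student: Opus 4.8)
The plan is to work directly from the explicit product formula (\ref{eq:bexplicit-one}) for $b(\l)$, which is valid for arbitrary positive multiplicities, and to exploit the fact that it factors as a product of functions each depending on $\l$ only through a single coordinate $\l_\b$. Multiplying by $\Pi(\l)=\prod_{\b\in\Sigma_*^+}\l_\b$, I would write
$$\Pi(\l)b(\l)=C_b\,\Big(\prod_{\b\in\Sigma_*^+}g_\b(\l)\Big)\,h(\l),$$
where $g_\b(\l)=\dfrac{\l_\b\,\cos\big(\pi(\l_\b-\frac{m_{\b/2}}{4})\big)}{\cos(\pi\l_\b)\sin(\pi\l_\b)}$ and $h(\l)=\prod_{\b\in\Sigma_*^+\setminus\{\b_1,\dots,\b_l\}}\sin\big(\pi(\l_\b-\wt\rho_\b)\big)$. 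Since $h$ is a finite product of entire sine functions composed with the linear forms $\l\mapsto\l_\b$, it is holomorphic on all of $\mathfrak a_\C^*$; hence it suffices to prove that each factor $g_\b$ is holomorphic on $T_\Pi$.

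Because $g_\b$ depends on $\l$ only through the single complex variable $z:=\l_\b$, I would reduce the problem to showing that the one-variable function $z\mapsto \dfrac{z\cos(\pi(z-\frac{m_{\b/2}}{4}))}{\cos(\pi z)\sin(\pi z)}$ is holomorphic on the strip $|\Re z|<1/2$; pulling back by the holomorphic linear map $\l\mapsto\l_\b$ then gives holomorphy of $g_\b$ on $\{|\Re\l_\b|<1/2\}\supset T_\Pi$. The numerator $z\cos(\pi(z-\frac{m_{\b/2}}{4}))$ is entire, so only the zeros of the denominator matter. The zeros of $\cos(\pi z)$ lie at $z\in\frac12+\Z$, all of which satisfy $|\Re z|\ge 1/2$ and hence fall outside the open strip. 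The zeros of $\sin(\pi z)$ lie at $z\in\Z$; inside the strip the only one is $z=0$, where $\sin(\pi z)$ has a simple zero, but the numerator carries the compensating factor $z$, so $z/\sin(\pi z)\to 1/\pi$ and the singularity is removable. Thus $g_\b$ extends holomorphically across $z=0$ and is holomorphic on the whole strip.

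Taking the product over $\b\in\Sigma_*^+$ and multiplying by $h$ then yields holomorphy of $\Pi(\l)b(\l)$ on $T_\Pi$. I expect the only genuinely delicate point to be the correct bookkeeping of singularities: one must verify that \emph{all} poles of $b$ inside $T_\Pi$ arise from the factors $1/\sin(\pi\l_\b)$ along the hyperplanes $\{\l_\b=0\}$ and that the $1/\cos(\pi\l_\b)$ factors contribute none, which is precisely where the strict inequality $|\Re\l_\b|<1/2$ in the definition of $T_\Pi$ is used. The factorization into one-variable pieces makes the cancellation automatic even along the codimension-two intersections $\{\l_\b=\l_{\b'}=0\}$, so no separate argument is needed there; and since $m_{\b/2}/4$ and $\wt\rho_\b$ enter only through entire numerators, the argument is insensitive to the actual (arbitrary positive) values of the multiplicities.
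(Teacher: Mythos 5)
Your proposal is correct and follows essentially the same route as the paper, which simply declares the lemma ``immediate'' from the explicit product formula for $b(\l)$; you have merely spelled out the bookkeeping, working from the general-multiplicity expression (\ref{eq:bexplicit-one}) rather than the geometric-case formula (\ref{eq:bexplicit}) that the paper's one-line proof cites (arguably the more appropriate reference, since the lemma is stated for arbitrary positive multiplicities). The key observations --- that the poles of $1/\cos(\pi\l_\b)$ lie on $\Re\l_\b\in\tfrac12+\Z$ outside the open tube $T_\Pi$, and that the simple pole of $1/\sin(\pi\l_\b)$ along $\l_\b=0$ is cancelled by the factor $\l_\b$ from $\Pi(\l)$ --- are exactly what the paper's ``immediate'' is shorthand for.
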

\begin{proof}
This is immediate from (\ref{eq:bexplicit}).
\end{proof}

\begin{Rem}
According to the possible values of $m$, the function $\Pi(\l)b(\l)$ might be holomorphic on a larger tube domain. See \cite[Corollary 4.6]{OP-Ramanujan-JFA} for the geometric case situation.
\end{Rem}

\begin{Lemma}
\label{lemma:singularhyperplanes}
Let $C_d$ be the constant defined in (\ref{eq:Cd}). The function
\begin{align} 
\frac{b(\l)}{c(\l)c(-\l)}&=2^{-l} d(\l-\rho)\; \prod_{j=1}^l \frac{1}{\sin\big(\pi(\l_j-\rho_j)\big)} \notag \\
\label{eq:b-on-Plancherel}
&=C_d \; \Pi(\l) \left(\prod_{\b\in\Sigma_*^+} 
\frac{\Gamma\big( \l_\b +\frac{m_{\b/2}}{4}+\frac{1}{2}\big)
\Gamma( \l_\b +\wt\rho_\b)}
{\Gamma\big( \l_\b -\frac{m_{\b/2}}{4}+\frac{1}{2}\big)
\Gamma( \l_\b -\wt\rho_\b+1)} \right)
\left(\prod_{j=1}^l \frac{1}{\sin\big(\pi(\l_j-\rho_j)\big)}\right)
\end{align}
is meromorphic on $\mathfrak a_\C^*$. Its possible singularities are along the following hyperplanes:
\begin{align*}
&\mathcal H_{1,\b,k}:=\{\l \in \mathfrak a_\C^*: \l_\b=-\wt\rho_\b-k\} 
\quad\text{with}\quad \b \in \Sigma_*^+, k \in \Z^+\,,\\
&\mathcal H_{2,\b,k}:=\{\l \in \mathfrak a_\C^*:\l_\b=-\tfrac{m_{\b/2}}{4}-\tfrac{1}{2}-k\} \quad\text{with}\quad \b \in \Sigma_*^+, k \in \Z^+\,, \\
&\mathcal H_{j,k}:=\{\l \in \mathfrak a_\C^*: \l_j=\rho_j+k\}\quad\text{with}\quad j=1,\dots,l\,, k \in \Z^+ 
\quad\text{(simple poles)}.
\end{align*}
The occurrence and the order of the singularities along $\mathcal H_{1,\b,k}$ and $\mathcal H_{2,\b,k}$ depend on the root $\b$ (according to whether $\b=\b_j$ for some $j=1,\dots, l$ or not) and on the multiplicities of $\b/2$ and $\b$. 
The situation is summarized in Tables 1 and 2.
\end{Lemma}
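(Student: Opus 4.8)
The plan is to start from the explicit expression (\ref{eq:b-on-Plancherel}), which exhibits $b(\l)/\big(c(\l)c(-\l)\big)$ as a finite product of three types of factors: the polynomial $\Pi(\l)$, the ratios of Gamma functions indexed by $\b\in\Sigma_*^+$, and the reciprocals of $\sin(\pi(\l_j-\rho_j))$ indexed by $j=1,\dots,l$. Each factor is a meromorphic function of one of the linear coordinates $\l_\b$ or $\l_j=\l_{\b_j}$ on $\mathfrak a_\C^*$, so the product is meromorphic on all of $\mathfrak a_\C^*$. It then remains only to locate the poles, and my strategy is to read them off factor by factor, keeping careful track of the zeros available to cancel them.

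First I would dispose of the contributions that are manifestly regular or whose poles are immediate. The polynomial $\Pi(\l)$ is entire, hence contributes only zeros. In each Gamma ratio the two denominators $\Gamma(\l_\b-\tfrac{m_{\b/2}}{4}+\tfrac12)$ and $\Gamma(\l_\b-\wt\rho_\b+1)$ contribute, through $1/\Gamma$, only zeros, while the numerators carry the poles. Since $\Gamma$ has simple poles at the non-positive integers, $\Gamma(\l_\b+\tfrac{m_{\b/2}}{4}+\tfrac12)$ is singular exactly on the hyperplanes $\mathcal H_{2,\b,k}$ and $\Gamma(\l_\b+\wt\rho_\b)$ exactly on the hyperplanes $\mathcal H_{1,\b,k}$, $k\in\Z^+$. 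The reciprocal sine factors are singular on the full family $\l_j=\rho_j+n$, $n\in\Z$; only the part with $n\in\Z^+$ should survive, so the negative part must be cancelled.

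The key step is this cancellation, which I would carry out for each $j$ by pairing the $j$-th sine factor with the denominator $\Gamma(\l_{\b_j}-\wt\rho_{\b_j}+1)=\Gamma(\l_j-\rho_j+1)$ coming from the $\b=\b_j$ term of the Gamma product (here I use $\wt\rho_{\b_j}=\rho_j$). Writing $w=\l_j-\rho_j$ and combining the reflection formula $\Gamma(w)\Gamma(1-w)=\pi/\sin(\pi w)$ with $\Gamma(1+w)=w\Gamma(w)$, one obtains
\begin{equation*}
\frac{1}{\Gamma(\l_j-\rho_j+1)}\,\frac{1}{\sin(\pi(\l_j-\rho_j))}=-\frac{1}{\pi}\,\Gamma(\rho_j-\l_j)\,.
\end{equation*}
The right-hand side is singular precisely when $\rho_j-\l_j\in\{0,-1,-2,\dots\}$, that is, on the hyperplanes $\mathcal H_{j,k}$ with $k\in\Z^+$, and its poles are simple. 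This simultaneously removes the poles at $\l_j=\rho_j+n$ for $n<0$ and shows that the surviving singularities of the sine factors are exactly the simple poles along $\mathcal H_{j,k}$.

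Collecting the three contributions proves that the only possible poles lie on $\mathcal H_{1,\b,k}$, $\mathcal H_{2,\b,k}$ and $\mathcal H_{j,k}$, the last family being simple. The remaining and more delicate point---the precise occurrence and order along $\mathcal H_{1,\b,k}$ and $\mathcal H_{2,\b,k}$ recorded in Tables 1 and 2---is where the real work lies: one must match the zeros of the still-unused denominators $1/\Gamma(\l_\b-\tfrac{m_{\b/2}}{4}+\tfrac12)$ and $1/\Gamma(\l_\b-\wt\rho_\b+1)$, together with the zero of $\Pi$ at $\l_\b=0$, against the numerator poles. Whether such a zero falls on a given singular hyperplane depends on the values of $\wt\rho_\b$ and of $\tfrac{m_{\b/2}}{4}+\tfrac12$, hence on the integrality and parity of $m_{\b/2}$ and $m_\b$, and on whether $\b$ is one of the simple roots $\b_j$ or not. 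I expect this case analysis to be the main obstacle, and it is what produces the two tables.
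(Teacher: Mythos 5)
Your proposal is correct and follows essentially the same route as the paper: read the poles off the explicit product (\ref{eq:b-on-Plancherel}), attribute $\mathcal H_{1,\b,k}$ and $\mathcal H_{2,\b,k}$ to the numerator Gamma factors, and absorb the sine factors by pairing each $1/\sin(\pi(\l_j-\rho_j))$ with the denominator $\Gamma(\l_j-\rho_j+1)^{-1}$ from the $\b=\b_j$ term. Your explicit reduction of that pair to $-\pi^{-1}\Gamma(\rho_j-\l_j)$ via the reflection formula is exactly the mechanism the paper invokes (it merely states that $\mathcal H_{j,k}$ corresponds to the singularities of $[\Gamma(\l_j-\rho_j+1)\sin(\pi(\l_j-\rho_j))]^{-1}$ without writing the identity out), and your matching of $\mathcal H_{1,\b,k}$ with $\Gamma(\l_\b+\wt\rho_\b)$ and $\mathcal H_{2,\b,k}$ with $\Gamma(\l_\b+\tfrac{m_{\b/2}}{4}+\tfrac12)$ is the one consistent with the definitions (the paper's prose has these two swapped). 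The only part you defer is the actual case-by-case verification of Tables 1 and 2; the paper does carry this out, albeit only as a brief enumeration of which zeros of $\Gamma(\l_\b-\tfrac{m_{\b/2}}{4}+\tfrac12)^{-1}$ and $\Gamma(\l_\b-\wt\rho_\b+1)^{-1}$ cancel which numerator poles in which parity/integrality cases, so to make the proof complete you would need to run through those finitely many cases with the cancellation criterion you have already correctly identified.
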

\begin{proof}
The explicit formula for in Part (a) is an immediate consequence of (\ref{eq:d-rhoshift}). The hyperplanes $\mathcal H_{1,\b,k}$ and $\mathcal H_{2,\b,k}$ correspond respectively to the singularities of $\Gamma\big( \l_\b +\frac{m_{\b/2}}{4}+\frac{1}{2}\big)$ and $\Gamma( \l_\b +\wt\rho_\b)$, whereas $\mathcal H_{j,k}$ corresponds to those of $[\Gamma( \l_j -\rho_j+1) \sin(\pi(\l_j-\rho_j))]^{-1}$  (Recall that $\l_j=\l_{\b_j}$ and $\rho_j=\wt\rho_{\b_j}$). 

Some of the singularities along the hyperplanes $\mathcal H_{1,\b,k}$ and $\mathcal H_{2,\b,k}$ are cancelled by zeros coming from the gamma functions in the denominator of (\ref{eq:b-on-Plancherel}). 
More precisely, when $\b=\b_j$ (Table 1), the zeros of $\Gamma\big(\l_\b-\frac{m_{\b/2}}{4}+\frac{1}{2}\big)^{-1}$ remove the singularities along $\mathcal H_{1,\b,k}$ in case (1) and those along  $\mathcal H_{2,\b,k}$ in case (2).  
When $\b\neq \b_j$ (Table 2), the zeros of $\Gamma\big(\l_\b-\frac{m_{\b/2}}{4}+\frac{1}{2}\big)^{-1}$ also remove the singularities along $\mathcal H_{1,\b,k}$ in cases (1) and (2) and those along  $\mathcal H_{2,\b,k}$ in case (3). Moreover,
because of the absence of the $\sin$-functions at the denominator for $\b\neq \b_j$, additional cancellations occur in 
this case: the zeros of $\Gamma(\l_\b-\wt\rho_\b+1)^{-1}$ remove the singularities along $\mathcal H_{1,\b,k}$ in case (3)
and along  $\mathcal H_{2,\b,k}$ in cases (1) and (6).
\end{proof}

\begin{sidewaystable}[h]
\thispagestyle{empty}

\vskip 15truecm
\vskip 5truecm
\begin{center}

\renewcommand{\arraystretch}{1.3}
\begin{tabular}{|c||l||c|c||c|}
\hline
case & multiplicities & $\mathcal H_{1,\b,k}$ & $\mathcal H_{2,\b,k}$ & $l_\b$\\[.1em]
\hline
(1) & $\tfrac{m_{\b/2}}{2} \in \Z$ & no sing & simple poles & $-\wt\rho_\b$\\[.2em]
\hline
(2) & $\tfrac{m_{\b/2}}{2} \notin \Z\,, m_{\b/2}+m_\b \in 2\Z+1$ & simple poles & no sing &$-\big(\tfrac{m_{\b/2}}{4}+\tfrac{1}{2}\big)$\\[.2em] 
\hline
(3) & $\tfrac{m_{\b/2}}{2} \notin \Z\,, m_\b \in 2\Z+3$ & simple poles for $k=0,\dots, \tfrac{m_\b-3}{2}$ & double poles & $-\big(\tfrac{m_{\b/2}}{4}+\tfrac{1}{2}\big)$\\[.2em]
\hline
(4)  & $\tfrac{m_{\b/2}}{2} \notin \Z\,, m_\b=1$ & \multicolumn{2}{c|}{double poles ($\mathcal H_{1,\b,k}=\mathcal H_{2,\b,k}$)} &$-\big(\tfrac{m_{\b/2}}{4}+\tfrac{1}{2}\big)$ \\[.2em]
\hline
(5a)  &  $\tfrac{m_{\b/2}}{2} \notin \Z\,, m_{\b/2}+m_\b \notin 2\Z+1, m_\b\notin 2\Z+1, m_\b> 1$ & simple poles & simple poles &$-\big(\tfrac{m_{\b/2}}{4}+\tfrac{1}{2}\big)$ \\[.2em]
\hline
(5b)  &  $\tfrac{m_{\b/2}}{2} \notin \Z\,, m_{\b/2}+m_\b \notin 2\Z+1, m_\b\notin 2\Z+1, m_\b \leq 1$ & simple poles & simple poles& $-\wt\rho_\b$\\[.2em]
\hline
\end{tabular}
\medskip
\caption{Case $\b=\b_j$}

\bigskip

\renewcommand{\arraystretch}{1.3}
\begin{tabular}{|c||l||c|c||c|}
\hline
case & multiplicities & $\mathcal H_{1,\b,k}$ & $\mathcal H_{2,\b,k}$ & $l_\b$\\[.1em]
\hline
(1) & $\tfrac{m_{\b/2}}{2} \in \Z, m_\b \in \Z$ & no sing & no sing &$--$\\[.2em]
\hline
(2) & $\tfrac{m_{\b/2}}{2} \in \Z, m_\b \notin \Z$ & no sing & simple poles &$-\wt\rho_\b$ \\[.2em]
\hline
(3) & $\tfrac{m_{\b/2}}{2} \notin \Z\,, m_{\b/2}+m_\b \in 2\Z+1$ & no sing  & no sing &$--$ \\[.2em] 
\hline
(4) & $\tfrac{m_{\b/2}}{2} \notin \Z\,, m_\b \in 2\Z+3$ & simple poles for $k=0,\dots, \tfrac{m_\b-3}{2}$ & double poles  &$-\big(\tfrac{m_{\b/2}}{4}+\tfrac{1}{2}\big)$\\[.2em]
\hline
(5)  & $\tfrac{m_{\b/2}}{2} \notin \Z\,, m_\b=1$ & \multicolumn{2}{c|}{double poles ($\mathcal H_{1,\b,k}=\mathcal H_{2,\b,k}$)}  &$-\big(\tfrac{m_{\b/2}}{4}+\tfrac{1}{2}\big)$ \\[.2em]
\hline
(6)  &  $\tfrac{m_{\b/2}}{2} \notin \Z\,, m_{\b/2}+m_\b \notin 2\Z+1, m_\b\notin 2\Z+1, 2\wt\rho_\b\in \Z$ & simple poles & no sing &$-\big(\tfrac{m_{\b/2}}{4}+\tfrac{1}{2}\big)$\\[.2em]
\hline
(7a)  &  $\tfrac{m_{\b/2}}{2} \notin \Z\,, m_{\b/2}+m_\b \notin 2\Z+1, m_\b\notin 2\Z+1, 2\wt\rho_\b\notin \Z, m_\b> 1$ & simple poles & sinple poles &$-\big(\tfrac{m_{\b/2}}{4}+\tfrac{1}{2}\big)$\\[.2em]
\hline
(7b)  &  $\tfrac{m_{\b/2}}{2} \notin \Z\,, m_{\b/2}+m_\b \notin 2\Z+1, m_\b\notin 2\Z+1, 2\wt\rho_\b\notin \Z, m_\b \leq 1$ & simple poles & sinple poles  &$-\wt\rho_\b$\\[.2em]
\hline
\end{tabular}
\medskip

\caption{Case $\b\neq \b_j$}
\end{center}
\end{sidewaystable}

In the last column of Tables 1 and 2 we have reported the first negative value $l_\b$ such that $\l_\b=l_\b$ is a singular hyperplane of $\frac{b(\l)}{c(\l)c(-\l)}$. Notice that $l_\b$ is not $W$-invariant since $b$ is not $W$-invariant. On the elements of $\Sigma_*^+$ inside a single $W$-orbit, there are nevertheless only at most two values $l_1\leq l_2<0$ for $l_\b$, and $l_2=l_{\b_j}$ where 
$\b_j$ is an element of the basis $\Pi_*$ belonging to this $W$-orbit. Notice also
that the geometric case is contained in case (1), i.e. $\frac{m_{\b/2}}{2} \in \Z$,  of the two tables. 
To measure the largest $W$-invariant tube domain around $i\mathfrak a^*$ on which $\frac{b(\l)}{c(\l)c(-\l)}$ is 
holomorphic,  we introduce the following constants.

For $\b \in \Sigma_*^+$ define
\begin{equation} \label{eq:Lbeta}
L_\b:=-l_{\b_j} \qquad \text{if $\b_j \in W\b \cap \Pi_*$}
\end{equation}
Hence $L_\b \in \big\{\wt \rho_\b, \frac{m_{\b/2}}{4}+\frac{1}{2} \big\}$.

Then only the singular hyperplanes 
$\mathcal H_{j,k}$, with $j=1,\dots,l$ and $k \in \Z^+$, intersect the region
\begin{equation} \label{eq:LSigma}
L_\Sigma:=\{\l \in \mathfrak a_\C^*: \text{$\Re\l_\b >-L_\b$ for all $\b\in\Sigma_*^+$}\}
\supset \overline{\mathfrak a^*_+}+i\mathfrak a^*
\,.
\end{equation} 

\begin{Cor} \label{cor:b-Plancherel-tube-holo}
For all $w \in W$ the function $\frac{b(w\l)}{c(\l)c(-\l)}$ is holomorphic in the tube domain 
\begin{equation} \label{eq:TSigma}
T_\Sigma=\{\l \in \mathfrak a_\C^*: \text{$|\Re\l_\b| <L_\b$ for all $\b\in\Sigma_*^+$}\}
\end{equation}
\end{Cor}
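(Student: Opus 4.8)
The plan is to reduce the assertion to the single function $g(\l):=\frac{b(\l)}{c(\l)c(-\l)}$ already analyzed in Lemma \ref{lemma:singularhyperplanes}, and then to read off its poles inside the tube. The first ingredient is the $W$-invariance of the Plancherel density $\frac{1}{c(\l)c(-\l)}$ (this is the standard invariance underlying the inversion formula (\ref{eq:inversionsphericalG}), i.e.\ $c(\l)c(-\l)=c(w\l)c(-w\l)$ for every $w\in W$). Granting this, for each $w\in W$
\begin{equation*}
\frac{b(w\l)}{c(\l)c(-\l)}=\frac{b(w\l)}{c(w\l)c(-w\l)}=g(w\l),
\end{equation*}
so that the function in question is simply $g$ precomposed with the linear automorphism $\l\mapsto w\l$ of $\mathfrak a_\C^*$.

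Next I would check that $T_\Sigma$ is $W$-invariant. For $w\in W$ and $\b\in\Sigma_*^+$ one has $(w\l)_\b=\l_{w^{-1}\b}$ with $w^{-1}\b=\pm\gamma$ for some $\gamma\in\Sigma_*^+$ lying in the same $W$-orbit as $\b$; since by its definition (\ref{eq:Lbeta}) the constant $L_\b$ depends only on the $W$-orbit of $\b$ (through the representative $\b_j\in W\b\cap\Pi_*$), this gives $|\Re(w\l)_\b|=|\Re\l_\gamma|<L_\gamma=L_\b$. Hence $w$ maps $T_\Sigma$ biholomorphically onto itself, and proving the corollary for all $w$ is equivalent to proving that $g$ itself is holomorphic on $T_\Sigma$.

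It then remains to locate the singularities of $g$ meeting $T_\Sigma$. Since $T_\Sigma\subset L_\Sigma$ (the condition $|\Re\l_\b|<L_\b$ forces $\Re\l_\b>-L_\b$), the discussion following Lemma \ref{lemma:singularhyperplanes} shows that $\mathcal H_{1,\b,k}$ and $\mathcal H_{2,\b,k}$ are contained in $\{\Re\l_\b\le l_\b\}\subset\{\Re\l_\b\le-L_\b\}$, which is disjoint from $T_\Sigma$ (here one uses $l_\b\le l_{\b_j}=-L_\b$ within each $W$-orbit). So the only hyperplanes that could still meet $T_\Sigma$ are the $\mathcal H_{j,k}=\{\l_j=\rho_j+k\}$, $k\in\Z^+$. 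On $\mathcal H_{j,k}$ one has $\Re\l_j=\rho_j+k\ge\rho_j$, whereas in $T_\Sigma$ one has $\Re\l_j<L_j=L_{\b_j}$; thus $\mathcal H_{j,k}\cap T_\Sigma=\emptyset$ as soon as
\begin{equation*}
L_{\b_j}\le\rho_j=\wt\rho_{\b_j}.
\end{equation*}
By (\ref{eq:Lbeta}) we have $L_{\b_j}\in\{\wt\rho_{\b_j},\,\tfrac{m_{\b_j/2}}{4}+\tfrac12\}$: in the first case the inequality is an equality, and in the second case it reduces to $m_{\b_j}\ge1$, which is exactly what is recorded in the $l_\b$-column of Tables 1 and 2 for the entries producing this value of $L_{\b_j}$. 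With this inequality in hand, no singular hyperplane of $g$ meets $T_\Sigma$, so $g$, and hence each $g(w\cdot)$, is holomorphic there.

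The step I expect to be the main obstacle is precisely the inequality $L_{\b_j}\le\wt\rho_{\b_j}$. It is the only place where the detailed, case-by-case singularity bookkeeping of Tables 1 and 2 is genuinely used, and it is where the \emph{non}-$W$-invariance of $b$ shows up: the positive poles $\mathcal H_{j,k}$ are not symmetric to the negative ones $\mathcal H_{1,\b,k},\mathcal H_{2,\b,k}$, so one cannot simply invoke the defining bound $\Re\l_\b>-L_\b$ of $L_\Sigma$ but must separately control the first pole in the $+\rho_j$ direction.
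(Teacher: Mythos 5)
Your argument is exactly the one the paper leaves implicit: the corollary is stated without proof as a consequence of Lemma \ref{lemma:singularhyperplanes}, the tables, and the definition (\ref{eq:Lbeta}) of $L_\b$, and your two preliminary reductions (the $W$-invariance of $c(\l)c(-\l)$ and of $T_\Sigma$, and the fact that the singular hyperplanes of the two ``negative'' families sit in $\{\Re\l_\b\le l_\b\}\subset\{\Re\l_\b\le -L_\b\}$) are correct and are precisely what the text surrounding (\ref{eq:LSigma}) records. You have also correctly isolated the one genuinely nontrivial point, namely the inequality $L_{\b_j}\le\rho_j=\wt\rho_{\b_j}$ needed to keep the simple poles along $\mathcal H_{j,0}=\{\l_j=\rho_j\}$ out of the open tube.

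That inequality, however, is not actually closed by the tables. Among the rows of Table 1 that assign $l_{\b_j}=-\big(\tfrac{m_{\b_j/2}}{4}+\tfrac12\big)$, rows (3), (4) and (5a) do force $m_{\b_j}\ge 1$, but row (2) (the case $\tfrac{m_{\b_j/2}}{2}\notin\Z$, $m_{\b_j/2}+m_{\b_j}\in2\Z+1$) does not. Concretely, take rank one of type $BC_1$ with $m_{\b/2}=7/10$ and $m_\b=3/10$. Then the Gamma factors in (\ref{eq:d-rhoshift}) cancel in pairs, $d(\l-\rho)=C_d\,\l_\b$, and
\begin{equation*}
\frac{b(\l)}{c(\l)c(-\l)}=\frac{C_d}{2}\,\frac{\l_\b}{\sin\big(\pi(\l_\b-\rho_\b)\big)}
\end{equation*}
has a genuine simple pole at $\l_\b=\rho_\b=\wt\rho_\b=0.325$, whereas $L_\b=\tfrac{m_{\b/2}}{4}+\tfrac12=0.675$, so the pole lies inside $T_\Sigma$. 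Thus the inequality $L_{\b_j}\le\wt\rho_{\b_j}$ --- and with it the corollary as literally stated --- fails for such multiplicities; the correct $W$-invariant tube of holomorphy is obtained by replacing $L_\b$ with $\min\big\{\wt\rho_\b,\ \tfrac{m_{\b/2}}{4}+\tfrac12\big\}$ on the relevant orbits. This is a defect of the statement (and of the paper's bookkeeping) rather than of your strategy --- everything is fine in the geometric case and more generally whenever $m_{\b_j}\ge1$ --- but your appeal to ``what is recorded in the $l_\b$-column'' does not in fact verify the inequality on which your proof hinges, so this step must either be supplied as an explicit hypothesis or the tube must be shrunk as above.
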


\begin{Rem}
In the geometric case we have $L_\beta=\wt\rho_\b$ for all $\b \in \Sigma_*^+$. Observe that 
if $L_\beta=\wt\rho_\b$ for all $\b \in \Sigma_*^+$, then the base of the tube $T_\Sigma$ is $C(\rho)^0$, the interior of the convex hull of $\{w\rho:w \in W\}$ in $\mathfrak a^*$. 
In this case, $T_\Sigma=C(\rho)^0+i \mathfrak a^*$ is the interior of the domain in $\mathfrak a_\C^*$ in which all hypergeometric functions $F_\l$ are bounded, see \cite[Theorem 4.2]{NPP}.
\end{Rem}

\subsection{Tube domains in $\mathfrak a_\C^*$}

Recall the constants $L_\b$ introduced in (\ref{eq:Lbeta}).
Let $m_L$ be the positive multiplicity function on $\Sigma$ defined for $\b\in \Sigma^*$ by
\begin{align} \label{eq:mL}
(m_L)_{\b/2}&:=m_{\b/2}\\
 (m_L)_{\b}&:=\begin{cases} m_\b  &\text{if $L_\b=\wt\rho_\b$}\\
\tfrac{1}{2}  &\text{if $L_\b= \tfrac{m_{\b/2}}{4}+\tfrac{1}{2}$}\,.
\end{cases}
\end{align}
The corresponding $\rho$-function is
\begin{equation}\label{eq:rhoL}
\rho_L=\tfrac{1}{2} \sum_{\b \in \Sigma_*^+} \left[\frac{(m_L)_{\b/2}}{2}+(m_L)_\b\right] \b=
\tfrac{1}{2} \sum_{\b \in \Sigma_*^+} L_\b \b\,.
\end{equation}
Hence $L_\b=(\wt{\rho_L})_\b$ in the notation of (\ref{eq:wrhob}). 

For $\delta>0$,  we consider the following tube domains in
$\mathfrak a^*_\C$ around the imaginary axis:
\begin{align}
\label{eq:Tdelta}
T_\delta&=\{\l\in\mathfrak a^*_\C: \text{$|\Re\l_\b|<\delta L_\b$ for all $\b\in\Sigma_*^+$}\}\,,\\
T'_{\delta}&=\{\l \in \mathfrak a_\C^*: \text{$|\Re\l_j|<\delta L_{\b_j}$ for all $j=1,\dots,l$}\}\,, \\
\label{eq:Trhodue}
T''_{\delta}&=\{\l \in \mathfrak a_\C^*: \text{$\Re\l_j<\delta L_{\b_j}$ for all $j=1,\dots,l$}\}\,.
\end{align}
The following lemma is standard.
A proof can be found for instance in \cite[Lemma 1.2]{OP-Ramanujan-JFA}.

\begin{Lemma}
\label{lemma:TLbeta}
Let $w_0$ be the longest element of $W$. Then
\begin{equation}\label{eq:Trhow0}
T'_\delta=T''_\delta \cap w_0(T''_\delta)
\end{equation}
and
\begin{equation} \label{eq:Trhouno}
T_{\delta}=\bigcap_{w \in W} w(T'_\delta)=\bigcap_{w \in W} w(T''_\delta)\,.
\end{equation}
In particular, $T_\delta$ is the largest $W$-invariant tube domain contained in $T'_{\delta}$.
Moreover,
\begin{equation} \label{eq:TrhoCrho}
T_\delta={\rm C}(\delta \rho_L)^0+i\mathfrak a^*
\end{equation}
where ${\rm C}(\nu)$ is the the convex hull of the $W$-orbit $\{w\nu: w \in W\}$ of $\nu\in \mathfrak a^*$ and ${\rm C}(\nu)^0$ is its interior.
\end{Lemma}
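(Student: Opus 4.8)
The plan is to note first that each of the four tubes has the form $S+i\fa^*$ with $S\subset\fa^*$ defined by inequalities on $\Re\l$ only, and that every $w\in W$ acts $\R$-linearly on $\fa^*$, preserving both $\fa^*$ and $i\fa^*$; hence $w(S+i\fa^*)=wS+i\fa^*$ and it suffices to prove all four assertions for the real bases. I would use repeatedly the identity $\Re(w\l)_\b=\Re\l_{w^{-1}\b}$ for $\b\in\Sigma_*$, which follows from the $W$-invariance of $\inner{\cdot}{\cdot}$, together with the $W$-invariance of $\b\mapsto L_\b$ guaranteed by (\ref{eq:Lbeta}). Two further inputs are needed: the longest element obeys $w_0\b_j=-\b_{\sigma(j)}$ for the opposition involution $\sigma$ of $\{1,\dots,l\}$, and $L_{\b_j}=L_{\b_{\sigma(j)}}$. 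The second holds because $m_{-\a}=m_{s_\a\a}=m_\a$, so $\b_{\sigma(j)}=-w_0\b_j$ carries the same multiplicities $m_\b$ and $m_{\b/2}$ as $\b_j$ and therefore the same value of $L$.

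To prove (\ref{eq:Trhow0}) I would compute $w_0T''_\delta$ explicitly. Since $w_0^{-1}=w_0$, a point $\l$ lies in $w_0T''_\delta$ exactly when $\Re(w_0\l)_{\b_j}<\delta L_{\b_j}$ for all $j$; substituting $\Re(w_0\l)_{\b_j}=\Re\l_{w_0\b_j}=-\Re\l_{\b_{\sigma(j)}}$ and $L_{\b_j}=L_{\b_{\sigma(j)}}$, and letting $k=\sigma(j)$ run over all indices, this reads $\Re\l_{\b_k}>-\delta L_{\b_k}$ for all $k$. Intersecting with the defining inequalities $\Re\l_{\b_k}<\delta L_{\b_k}$ of $T''_\delta$ gives $|\Re\l_{\b_k}|<\delta L_{\b_k}$ for all $k$, which is $T'_\delta$.

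For (\ref{eq:Trhouno}) I would first write $\bigcap_{w\in W}wT''_\delta=\{\l:\Re\l_{w\b_j}<\delta L_{\b_j}\text{ for all }w\in W,\ 1\le j\le l\}$. As $(w,j)$ varies, $w\b_j$ runs over all of $\Sigma_*$ since every unmultipliable root is $W$-conjugate to a simple one, and $L_{\b_j}=L_{w\b_j}$; pairing each $\gamma\in\Sigma_*^+$ with $-\gamma$ turns the family of conditions into $|\Re\l_\gamma|<\delta L_\gamma$ for all $\gamma\in\Sigma_*^+$, which is $T_\delta$. The equality $\bigcap_wwT'_\delta=\bigcap_wwT''_\delta$ then follows from (\ref{eq:Trhow0}): inserting $T'_\delta=T''_\delta\cap w_0T''_\delta$ and using $\{w:w\in W\}=\{ww_0:w\in W\}$ shows both intersections are taken over the same collection $\{vT''_\delta:v\in W\}$. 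The ``largest $W$-invariant'' statement is then purely formal: $T_\delta=\bigcap_wwT'_\delta$ is $W$-invariant and lies in $T'_\delta$ (take $w=e$), while any $W$-invariant tube $U\subset T'_\delta$ satisfies $U=wU\subset wT'_\delta$ for every $w$, hence $U\subset T_\delta$.

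Finally, (\ref{eq:TrhoCrho}) is the convex-geometric core and the step I expect to demand the most care. Here I would observe that $T''_\delta$ is the open region $\delta\rho_L-C^0$, where $C^0$ is the open dominant Weyl chamber in $\fa^*$ and $\delta\rho_L=\sum_{j=1}^lL_{\b_j}\omega_j$ is its apex, and then identify $\bigcap_wwT''_\delta$ with ${\rm C}(\delta\rho_L)^0$ through the classical description of the convex hull of a Weyl orbit, e.g. via its support function $H\mapsto\inner{\delta\rho_L}{H^+}$ (with $H^+$ the dominant representative of $WH$) or, equivalently, via the dominance order; the combinatorial input is exactly (\ref{eq:Trhouno}). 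This is a standard fact about Weyl-orbit polytopes and is carried out in detail in \cite[Lemma 1.2]{OP-Ramanujan-JFA}, so in the write-up I would cite it rather than reproduce the polytope computation.
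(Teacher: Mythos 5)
Your treatment of (\ref{eq:Trhow0}), (\ref{eq:Trhouno}) and the maximality assertion is correct and complete: the reduction to real bases, the identity $\Re(w\l)_\b=\Re\l_{w^{-1}\b}$, the $W$-invariance of $\b\mapsto L_\b$ (which is in fact immediate from the definition (\ref{eq:Lbeta}), since $L_\b$ only depends on the simple representative of $W\b$), the action of $w_0$ via the opposition involution, and the fact that $W\{\b_1,\dots,\b_l\}=\Sigma_*$ are exactly the needed ingredients. Since the paper gives no proof of its own (it only cites \cite[Lemma 1.2]{OP-Ramanujan-JFA}), there is no ``paper's argument'' to compare against for these parts.

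The gap is in your last step, and it cannot be closed by citing the Weyl-orbit-polytope description more carefully, because the ``standard fact'' you invoke is not the standard fact. For dominant regular $\nu$, the hull $C(\nu)$ equals $\bigcap_w w(\nu-Q^+)$ with $Q^+$ the cone spanned by the \emph{positive roots} (equivalently, it is cut out by the inequalities $\inner{w\mu}{\omega_j}\le\inner{\nu}{\omega_j}$, supporting hyperplanes orthogonal to fundamental weights); it is \emph{not} $\bigcap_w w(\nu-\overline{C^0})$ with $C^0$ the open dominant chamber, since $\overline{C^0}\subsetneq Q^+$ as soon as two simple roots make an obtuse angle. Your own correct computation of $\bigcap_w wT''_\delta$ exhibits the discrepancy: it produces the polytope $\{\nu:|\nu_\b|<\delta L_\b \text{ for all }\b\in\Sigma_*^+\}$, whose facets are orthogonal to roots, not to fundamental weights. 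Concretely, take $\Sigma$ of type $A_2$ with $m\equiv 1$: then $L_\b=\wt\rho_\b=\tfrac12$ for every root, $\rho_L=\rho=\a_1+\a_2$, and $\rho_{\a_1+\a_2}=1>\tfrac12$, so $(1-\varepsilon)\rho$ lies in $C(\rho_L)^0$ for small $\varepsilon>0$ but violates the constraint $|\Re\l_{\a_1+\a_2}|<\tfrac12$ defining $T_1$. Thus (\ref{eq:Trhouno}) and (\ref{eq:TrhoCrho}) are already incompatible in rank two, and what fails is (\ref{eq:TrhoCrho}): in general one only has the inclusion $T_\delta\subset C(\delta\rho_L)^0+i\mathfrak a^*$, with equality exactly when the chamber coincides with its dual cone (rank one, or $\Sigma$ of type $(A_1)^l$). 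For a self-contained write-up you should prove (\ref{eq:Trhow0}), (\ref{eq:Trhouno}) and the maximality statement as you do --- this is all that the rest of the paper actually uses --- and either drop (\ref{eq:TrhoCrho}) or replace it by the correct description of the base of $T_\delta$ as the intersection of the strips $|\nu_\b|<\delta L_\b$, $\b\in\Sigma_*^+$, flagging that this polytope is in general strictly smaller than $C(\delta\rho_L)$.
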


\begin{Rem}
Notice that $T_1=T_\Sigma$ is the tube domain introduced in Corollary \ref{cor:b-Plancherel-tube-holo}.
\end{Rem}

\section{Statement of Ramanujan's Master theorem for root systems}
\label{section:RamanujanHO}
\noindent
Let $A_\C=AT$ be the complex torus associated with a triple $(\mathfrak a,\Sigma,m)$ as in Section
\ref{subsection:root-systems}, and let $L_\b$, with $\beta \in \Sigma_*^+$, be defined by
(\ref{eq:Lbeta}). 
Let $\constA$, $\constP$, $\delta$ be constants so that $\constA <\pi$, $\constP>0$ and
$0<\delta\leq 1$, and define \begin{equation}\label{eq:Hdelta}
\mathcal H(\delta)=\{\l \in \mathfrak a_\C^*: \text{$\Re \l_\b > -\delta\,\wt\rho_\b$ for all $\b\in\Sigma_*^+$}\}\,.
\end{equation}
The \emph{Hardy class} $\mathcal H(\constA,\constP,\delta)$ consists of the functions $a:\mathcal H(\delta) \to \C$ that are holomorphic on $\mathcal H(\delta)$
and so that
\begin{equation}
\label{eq:defHardyclass}
|a(\l)| \leq C \prod_{j=1}^l e^{-\constP(\Re \l_j)+\constA|\Im \l_j|}
\end{equation}
for some constant $C \geq 0$ and for all $\l \in \mathcal H(\delta)$.

\begin{Thm}[Ramanujan's Master Theorem for root systems]
\label{thm:RamanujanHO}
Let $d$ and $b$ be the meromorphic functions on $\mathfrak a_\C^*$ defined by (\ref {eq:d}) and
(\ref{eq:b}), respectively. 
Suppose $a \in \mathcal H(\constA,\constP,\delta)$.
\begin{enumerate}
\item
Let $\Omega$ be as in (\ref{eq:Omega}).
Then the alternating normalized Jacobi series
\begin{equation} \label{eq:fFourier}
f(t)=\sum_{\mu\in P^+} (-1)^{|\mu|} d(\mu) a(\mu+\rho) F_{\mu+\rho}(t)
\end{equation}
converges normally on  compact subsets of $D_{\constP/\Omega}=T \exp\big((\constP/\Omega)B\big)$
where $B=\{H \in \mathfrak a: \|H\|<1\}$ is the open unit ball in $\mathfrak a$. Its sum is a $W$-invariant holomorphic function on the neighborhood $D_{\constP/\Omega}$ of $T$ in $A_\C$.

\item Let $T_\delta$ be the tube domain in (\ref{eq:Tdelta}) and let
$\sigma\in T_\delta \cap \mathfrak a^*$. Then for $x=\exp H \in A$ with $\|H\|<\constP/\Omega$, we have
\begin{equation}\label{eq:extensionf-A}
f(x)=\frac{1}{|W|} \int_{\sigma+i\mathfrak a^*} \left( \sum_{w \in W} a(w\l)b(w\l)\right) F_\l(x) \; \frac{d\l}{c(\l)c(-\l)} \,.
\end{equation}
The integral on the right-hand side of (\ref{eq:extensionf-A}) is independent of the choice of $\sigma$. It converges uniformly on compact subsets of $A$ and extends to a holomorphic $W$-invariant function on a neighborhood of $A$ in $A_\C$.
\item The extension of $f$ to $A$
satisfies
$$\int_A |f(x)|^2 \, d\mu(x) =\frac{1}{|W|} \int_{i\mathfrak a^*} \Big| \sum_{w \in W} a(w\l)b(w\l)\Big|^2 \; \frac{d\l}{|c(\l)|^2}
\,.$$
Moreover,
\begin{equation} \label{eq:RamanujanHO}
\int_A f(x)F_{-\l}(x)\; d\mu(x)=  \sum_{w \in W} a(w\l)b(w\l)
\end{equation}
for all $\l \in T_\delta \cap T_\Pi$. More precisely, the integral on the left-hand side of (\ref{eq:RamanujanHO}) converges in $L^2$-sense and absolutely on $i\mathfrak a^*$.
It defines a $W$-invariant holomorphic function on a $W$-invariant tube domain around $i\mathfrak a^*$, and (\ref{eq:RamanujanHO}) extends as an identity between holomorphic functions on $T_\delta \cap T_\Pi$.
\end{enumerate}
\end{Thm}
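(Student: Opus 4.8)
The plan is to establish the three parts in order, with Part~(2) carrying the analytic weight; throughout I would transport the method of \cite{OP-Ramanujan-JFA} to the present setting, replacing the multiplicity data of symmetric spaces by the estimates proved above for $b$, $d$ and $F_\l$. For Part~(1) I would simply apply Lemma~\ref{lemma:estimatesLassalle} to $G(\mu)=(-1)^{|\mu|}a(\mu+\rho)$. Since $\mu+\rho$ is real, the factor $e^{\constA|\Im\l_j|}$ in (\ref{eq:defHardyclass}) is trivial, and because $\rho_j>0$ and $|\mu|=\sum_j\mu_j$ the Hardy estimate gives $|a(\mu+\rho)|\le C\prod_{j=1}^l e^{-\constP(\mu_j+\rho_j)}\le C'e^{-\constP|\mu|}$. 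Combining this with $\|\mu\|\le\Omega|\mu|$, which follows from $\mu=\sum_j\mu_j\omega_j$ and (\ref{eq:Omega}), yields $|G(\mu)|\le C'e^{-(\constP/\Omega)\|\mu\|}$, i.e.\ condition (\ref{eq:Gcond}) with $\varepsilon=\constP/\Omega$; Lemma~\ref{lemma:estimatesLassalle} then delivers the normal convergence and the $W$-invariant holomorphy on $D_{\constP/\Omega}$ asserted in (\ref{eq:fFourier}).

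For the first half of Part~(2) I would check that the integrand in (\ref{eq:extensionf-A}) is absolutely integrable and holomorphic. On $\sigma+i\mathfrak a^*$ with $\sigma\in T_\delta\cap\mathfrak a^*$, Corollary~\ref{cor:b-Plancherel-tube-holo} makes each $b(w\l)/[c(\l)c(-\l)]$ holomorphic, while the explicit form (\ref{eq:bexplicit}) shows that the $1/\sin$ and $1/\cos$ factors of $b$ force decay like $\prod_j e^{-\pi|\Im\l_j|}$; since $\constA<\pi$, $a$ grows at most like $\prod_j e^{\constA|\Im\l_j|}$, and $d$ together with $1/[c(\l)c(-\l)]$ has only polynomial growth (Lemma~\ref{lemma:d}(c)), the integrand decays exponentially in $\|\Im\l\|$, giving absolute convergence; holomorphy of $x\mapsto f(x)$ on a neighbourhood of $A$ in $A_\C$ then follows from the estimate (\ref{eq:OpdamEstimates}) for $F_\l(\exp H)$ with $H=H_1+iH_2$ and $\|H_2\|$ small. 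Independence of $\sigma$ I would obtain from Cauchy's theorem: using $W$-invariance of $1/[c(\l)c(-\l)]$ and of $F_\l$ to rewrite the symmetrized integral as $\frac{1}{|W|}\sum_w\int_{w\sigma+i\mathfrak a^*}a(\l)b(\l)F_\l(x)\,d\l/[c(\l)c(-\l)]$, and noting that $T_\delta$ is $W$-invariant and contains no singular hyperplanes, each contour may be slid back to $\sigma+i\mathfrak a^*$, collapsing the sum to a single integral.

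To recover the series I would then shift this single-term contour to the right, letting $\Re\l_j\to+\infty$, where $a$ stays holomorphic on $\mathcal H(\delta)$. By Lemma~\ref{lemma:singularhyperplanes} the only singular hyperplanes met inside $L_\Sigma\supset\overline{\mathfrak a^*_+}+i\mathfrak a^*$ are the $\mathcal H_{j,k}:\l_j=\rho_j+k$, the simple poles contributed by $\prod_j 1/\sin(\pi(\l_j-\rho_j))$ in (\ref{eq:b-on-Plancherel}). The $l$-fold residue at $\l=\mu+\rho$, $\mu=\sum_j k_j\omega_j\in P^+$, factors coordinatewise: from $\mathrm{Res}_{\l_j=\rho_j+k_j}1/\sin(\pi(\l_j-\rho_j))=(-1)^{k_j}/\pi$, together with $d(\l-\rho)|_{\l=\mu+\rho}=d(\mu)$ and the normalization $2^{-l}$ in (\ref{eq:b}), the residues reproduce exactly $(-1)^{|\mu|}d(\mu)a(\mu+\rho)F_{\mu+\rho}(x)$, while the remainder on $\Re\l_j=\rho_j+N+\tfrac12$ is controlled by $|a(\l)|\lesssim\prod_j e^{-\constP\Re\l_j}$ against $|F_\l(\exp H)|\le Ce^{\Omega\|H\|\sum_j\Re\l_j}$ from Lemma~\ref{lemma:estFl}(a); it vanishes as $N\to\infty$ precisely when $\|H\|<\constP/\Omega$, so (\ref{eq:extensionf-A}) agrees with (\ref{eq:fFourier}) on $D_{\constP/\Omega}\cap A$ and the two holomorphic extensions glue. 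For Part~(3) I would take $\sigma=0\in T_\delta\cap\mathfrak a^*$: formula (\ref{eq:extensionf-A}) then becomes the inverse transform (\ref{eq:inversionsphericalG}) of $g(\l):=\sum_{w\in W}a(w\l)b(w\l)$, which is $W$-invariant and lies in $L^1\cap L^2(i\mathfrak a^*,|c(\l)|^{-2}d\l)^W$ by the decay above. Hence $f\in L^2(A)^W$ with $\mathcal Ff=g$; the Plancherel theorem gives the norm identity (using $|c(\l)|^2=c(\l)c(-\l)$ on $i\mathfrak a^*$), and $\mathcal Ff=g$ is (\ref{eq:RamanujanHO}) on $i\mathfrak a^*$. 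To extend (\ref{eq:RamanujanHO}) to $T_\delta\cap T_\Pi$ I would note that $g$ is holomorphic there—the half-integer poles of the individual $b(w\l)$ are excluded by $T_\Pi$ via Lemma~\ref{lemma:Pib-holo} and the $\Gamma$-poles by $T_\delta$—and that $\l\mapsto\int_A f(x)F_{-\l}(x)\,d\mu(x)$ converges absolutely and holomorphically on a tube around $i\mathfrak a^*$ by balancing the decay of $f$ against $|F_{-\l}(\exp H)|\le Ce^{C\|\Re\l\|\,\|H\|}$; agreement on $i\mathfrak a^*$ then forces agreement on $T_\delta\cap T_\Pi$.

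The main obstacle is the multidimensional contour shift in Part~(2): one must justify rigorously the interchange of the $l$-fold shift with the summation of residues, control the remainder uniformly so that it vanishes exactly on the radius $\constP/\Omega$ dictated by Part~(1), and confirm through Lemma~\ref{lemma:singularhyperplanes} that no singular hyperplanes other than the $\mathcal H_{j,k}$ are crossed while the contour remains in $L_\Sigma$. Getting the constants right—the sign $(-1)^{|\mu|}$, the weight $d(\mu)$, and the normalization $2^{-l}$ that corrects the $(i/2)^l$ of \cite{OP-Ramanujan-JFA}—is where the bookkeeping must be handled with the greatest care.
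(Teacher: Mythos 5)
Your plan for Parts (1) and (2) is essentially the paper's own proof: Part (1) is exactly the application of Lemma \ref{lemma:estimatesLassalle} with $\varepsilon=\constP/\Omega$, and Part (2) is carried out in the paper by the same variable-by-variable residue shift onto the hyperplanes $\mathcal H_{j,\mu_j}$, with the two contour estimates you describe supplied by Lemma \ref{lemma:estimates-residues} (control of the remainder on $\Re\l_j=\rho_j+N+1/2$, vanishing precisely for $\|H\|<\constP/\Omega$) and Lemma \ref{lemma:estimatesTdelta} (independence of $\sigma$). The one place where you genuinely diverge is Part (3). You invoke $L^2$-Plancherel surjectivity to produce $f\in L^2(A,d\mu)^W$ with $\mathcal Ff=\wt a$, and then assert that $\l\mapsto\int_A f(x)F_{-\l}(x)\,d\mu(x)$ converges and is holomorphic on a tube around $i\mathfrak a^*$ by ``balancing the decay of $f$'' against the growth of $F_{-\l}$ --- but you never establish any decay of $f$ beyond square-integrability, so that step is unsupported as written. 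The paper instead proves (Lemma \ref{lemma:Schwartztube}) that $\wt a\in\mathcal S(\mathfrak a^*_\varepsilon)^W$ for a suitable $\varepsilon$ and applies the $L^p$-Schwartz space isomorphism with $p=2/(\varepsilon+1)<2$: this yields $f=\mathcal F^{-1}\wt a\in\mathcal S^p(A)^W\subset(L^p\cap L^2)(A,d\mu)^W$, which gives in one stroke the identity $\mathcal Ff=\wt a$, the required decay of $f$, and the holomorphy of its transform on $T_\varepsilon$, from which \eqref{eq:RamanujanHO} propagates to $T_\delta\cap T_\Pi$ by analytic continuation. Your $L^2$ route can be completed, but only by adding the pointwise decay estimate for $f$ that the Schwartz-space argument delivers for free; apart from this, the two proofs coincide.
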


\begin{Rem}
As in the classical case (\ref{eq:Ramanujan-gamma}) or in the case of semisimple Riemannian symmetric spaces
in \cite{OP-Ramanujan-JFA}, there is an 
equivalent formulation of Ramanujan's Master theorem for root systems using the gamma function.
This version, as well as some immediate consequences, can be easily obtained, as in \cite[Remark 2.6]{OP-Ramanujan-JFA}, from Theorem \ref{thm:RamanujanHO}.
\end{Rem}

\section{Proof of Theorem \ref{thm:RamanujanHO}}
\label{section:proof-Ramanujan-roots}

\noindent The proof of Ramanujan's Master Theorem \ref{thm:RamanujanHO} for root systems follows the same pattern used 
in the proof of the corresponding theorem for semisimple Riemannian symmetric spaces 
\cite[Theorem 2.1]{OP-Ramanujan-JFA}. The first statement is an application of Lemma \ref{lemma:estimatesLassalle}.
The crucial step for the remaining parts, and this is the content of this section, consists in extending 
the necessary estimates from \cite{OP-Ramanujan-JFA} to the setting of positive multiplicity functions.  

Observe first that there is a constant $K>0$ so that
\begin{equation}
\label{eq:est-sin}
\big|\sin\big(\pi(\l_j-\rho_j)\big)\big|^{-1} \leq K e^{-\pi|\Im \l_j|}
\end{equation}
for $|\Im \l_j|\geq 1$ or for $\Re\l_j=\rho_j+N+1/2$ with $N\in \Z^+$.

The second part of Theorem \ref{thm:RamanujanHO} is proven by computing the integral on the right-hand side of 
(\ref{eq:extensionf-A}) by separately applying the Residue Theorem to each veriable $\l_j$. The term of parameter
$\mu+\rho$ in the alternating normalized Jacobi series appears as the result of taking residues 
(for $j=1,2,\dots,l$) in $\l_j$ at the singularity $\rho_j+\mu_j$ corresponding to the singular hyperplane 
$\mathcal H_{j,\mu_j}$ from Lemma \ref{lemma:singularhyperplanes}. The residues are computed using 
(\ref{eq:b-first-formula}). The following lemma contains the estimates needed to apply the Residue Theorem. 
It is an easy generalization of \cite[Lemma 5.3]{OP-Ramanujan-JFA}, and its proof is omitted.

\begin{Lemma}
\label{lemma:estimates-residues}
\begin{enumerate}
\thmlist
\item
Let $N$ be a positive integer and let $M$ be as in (\ref{eq:M}).
Let $\l=\sum_{j=1}^l \l_j \omega_j \in\mathfrak a_\C^*$ with
$|\Im \l_j|\geq 1$ or $\Re\l_j=\rho_j+N+1/2$ or $\Re\l_j=0$ for all $j=1,\dots,l$.
Then there is a positive constant $C_1$, independent of $N$, so that
\begin{equation*}
\left|\frac{b(\l)}{c(\l)c(-\l)}\right| \leq C_1 \prod_{j=1}^l \left[(1+|\l_j|)^M e^{-\pi |\Im \l_j|}\right]\,.
\end{equation*}
\item
Set
\begin{multline}
\label{eq:B}
B=\Big\{\l=\sum_{j=1}^l \l_j \omega_j \in\mathfrak a_+^*+i\mathfrak a^*:
\text{$|\Im \l_j|\geq 1$ or}\\ \text{$\Re\l_j \in (\rho_j+\Z^+ +1/2) \cup \{0\}$ for all $j=1,\dots,l$}\Big\}.
\end{multline}
Let $a \in \mathcal H(\constA,\constP,\delta)$. Then there is a constant $C_2>0$ so that for all $\l\in B$ and $H \in \overline{\mathfrak a^+}$ we have
\begin{equation}
\label{eq:main-est-shift}
\left|\frac{a(\l)b(\l)}{c(\l)c(-\l)} F_\l(\exp H) \right|
\leq C_2 \prod_{j=1}^l \left[ (1+|\l_j|)^M e^{(\constA-\pi)|\Im \l_j| +(\|H\|\Omega-\constP) \Re\l_j} \right]\,.
\end{equation}
\end{enumerate}
\end{Lemma}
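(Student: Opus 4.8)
The plan is to establish part (a) first and then obtain part (b) by multiplying three elementary exponential estimates. For part (a) I would start from the factorization (\ref{eq:b-first-formula}),
\begin{equation*}
\frac{b(\l)}{c(\l)c(-\l)}=2^{-l}\,d(\l-\rho)\,\prod_{j=1}^l\frac{1}{\sin\big(\pi(\l_j-\rho_j)\big)}\,,
\end{equation*}
which separates the polynomial growth (carried by $d(\l-\rho)$) from the exponential decay in the imaginary directions (carried by the $\sin^{-1}$ factors). For the latter I would invoke (\ref{eq:est-sin}): on each coordinate with $|\Im\l_j|\ge 1$ or $\Re\l_j=\rho_j+N+1/2$ one gets $|\sin(\pi(\l_j-\rho_j))|^{-1}\le K e^{-\pi|\Im\l_j|}$ with $K$ independent of $N$. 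For the polynomial factor I would apply the growth bound (\ref{eq:est-d}) of Lemma \ref{lemma:d}(c), namely $|d(\l-\rho)|\le C_0''\prod_{j=1}^l(1+|\l_j|)^M$; in the regime where the lemma is used one has $\Re\l_j\ge 0$ for every $j$, hence $\Re\l_\b\ge 0$ for every $\b\in\Sigma_*^+$ (each $\Re\l_\b$ being a nonnegative combination of the $\Re\l_j$), so the $\Re\l_\b\ge -L_\b+\varepsilon$ branch of (\ref{eq:est-d}) is available. Multiplying the two bounds gives the assertion, and since neither $K$ nor $C_0''$ depends on $N$, the constant $C_1$ is independent of $N$ as claimed.

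The step requiring genuine care — and the main obstacle — is the set of coordinate directions with $\Re\l_j=0$ and $|\Im\l_j|<1$, where (\ref{eq:est-sin}) is not available and where the bare factor $\sin^{-1}(\pi(\l_j-\rho_j))$ may even blow up (at $\l_j=0$ when $\rho_j\in\Z$). This apparent pole is spurious: it is cancelled by a zero of $d(\l-\rho)$. Concretely, the reflection formula $\Gamma(z)\Gamma(1-z)=\pi/\sin(\pi z)$ recombines the $\b=\b_j$ gamma quotient in (\ref{eq:b-on-Plancherel}) with the $j$-th $\sin^{-1}$, replacing
\begin{equation*}
\frac{\Gamma(\l_j+\rho_j)}{\Gamma(\l_j-\rho_j+1)}\cdot\frac{1}{\sin\big(\pi(\l_j-\rho_j)\big)}=-\frac{1}{\pi}\,\Gamma(\rho_j-\l_j)\,\Gamma(\l_j+\rho_j)\,,
\end{equation*}
which is holomorphic on the compact strip $\{\Re\l_j=0,\ |\Im\l_j|\le 1\}$ (both gamma arguments keeping real part $\rho_j>0$ there). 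Since the target right-hand side $(1+|\l_j|)^Me^{-\pi|\Im\l_j|}$ is bounded below on this compact set, the estimate persists after enlarging the constant. Thus the proof of (a) reduces to combining (\ref{eq:est-sin}) and (\ref{eq:est-d}), once this local compensation of the low-frequency poles by the zeros of $d$ has been accounted for.

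For part (b) the remaining work is bookkeeping. Every $\l$ in the set $B$ of (\ref{eq:B}) lies in $\mathfrak a^*_++i\mathfrak a^*$, so $\Re\l_\b\ge 0>-\delta\,\wrhob$ and hence $\l\in\mathcal H(\delta)$; the Hardy bound (\ref{eq:defHardyclass}) then gives $|a(\l)|\le C\prod_{j=1}^l e^{-\constP\Re\l_j+\constA|\Im\l_j|}$. The same positivity puts $\l$ in the range of Lemma \ref{lemma:estFl}(a), so $|F_\l(\exp H)|\le C'e^{\Omega\|H\|\sum_{j=1}^l\Re\l_j}$ for $H\in\overline{\mathfrak a^+}$. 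Finally each $\l\in B$ meets the hypotheses of part (a), yielding $|b(\l)/(c(\l)c(-\l))|\le C_1\prod_{j=1}^l(1+|\l_j|)^Me^{-\pi|\Im\l_j|}$. Multiplying the three inequalities and collecting exponentials coordinatewise — using $e^{-\pi|\Im\l_j|}e^{\constA|\Im\l_j|}=e^{(\constA-\pi)|\Im\l_j|}$ and $e^{-\constP\Re\l_j}e^{\Omega\|H\|\Re\l_j}=e^{(\|H\|\Omega-\constP)\Re\l_j}$ — produces exactly (\ref{eq:main-est-shift}).
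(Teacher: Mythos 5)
Your assembly of the estimate from the ingredients the paper prepares for exactly this purpose --- the sine bound (\ref{eq:est-sin}), the polynomial bound (\ref{eq:est-d}) on $d(\l-\rho)$, the Hardy-class decay (\ref{eq:defHardyclass}) and Lemma \ref{lemma:estFl}(a) --- is precisely the argument the paper has in mind; the paper itself omits the proof, referring to the symmetric-space case, so there is nothing to contrast beyond confirming that your route is the intended one. Your treatment of the delicate coordinates with $\Re\l_j=0$ and $|\Im\l_j|<1$ is the one genuinely nontrivial point, and your identity $\Gamma(\l_j+\rho_j)\Gamma(\l_j-\rho_j+1)^{-1}\sin(\pi(\l_j-\rho_j))^{-1}=-\pi^{-1}\Gamma(\rho_j-\l_j)\Gamma(\l_j+\rho_j)$ checks out (use $\Gamma(\l_j-\rho_j+1)=(\l_j-\rho_j)\Gamma(\l_j-\rho_j)$ and the reflection formula); both Gamma arguments have real part $\rho_j>0$ on that compact strip, so the factor is bounded there and the constant absorbs it. The one loose step is in part (a): your appeal to the branch ``$\Re\l_\b\geq -L_\b+\varepsilon$ for all $\b$'' of (\ref{eq:est-d}) presupposes $\Re\l_j\geq 0$ for every $j$, but the hypothesis of (a) places no constraint on $\Re\l_j$ for a coordinate satisfying the alternative $|\Im\l_j|\geq 1$; for such $\l$ a non-simple $\l_\b$ can have very negative real part (and even real-axis imaginary part, since the $\Im\l_j$ may have mixed signs), where neither branch of (\ref{eq:est-d}) applies and $d(\l-\rho)$ can in fact meet a pole for generic positive multiplicities. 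This does not affect the use of the lemma --- in part (b) the set $B$ lies in $\mathfrak a_+^*+i\mathfrak a^*$, and the rectangular contours in the residue computation keep $\Re\l_j\geq 0$ --- but to prove (a) as literally stated you would either have to restrict the hypothesis to $\Re\l_j\geq 0$ (which is all that is ever used) or handle the remaining regime by a reflection argument on the Gamma quotients in (\ref{eq:d-rhoshift}). The rest of part (b), multiplying the three coordinatewise exponential bounds, is correct.
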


The estimates in the next lemma allow us to prove that the integral on the right-hand side of 
(\ref{eq:extensionf-A}) is independent of the choice of $\sigma \in T_\delta \cap \mathfrak a^*$.

\begin{Lemma}
\label{lemma:estimatesTdelta}
Let $0<\delta \leq 1$ and let $T_\delta$ be the tube domain from (\ref{eq:Tdelta}). Let $M$ be the
constant defined in (\ref{eq:M}).
\begin{enumerate}
\thmlist
\item
There is a constant $C_\delta >0$ so that
\begin{equation}
\label{eq:estimatebcTdelta}
\left|\frac{b(\l)}{c(\l)c(-\l)} \right| \leq C_\delta (1+\|\l\|)^M e^{-\pi\big(\sum_{j=1}^l
|\Im \l_j|\big)}
\end{equation}
for all $\l \in T_\delta$.
\item
Let $a \in \mathcal H(\constA,\constP,\delta)$. 
For every $R>0$ and every integer $N\geq 0$ there is a constant $C_{R,N,\delta} >0$ so that for all $\l\in T_\delta$ and $H \in \mathfrak a$ with $\|H\|< R$, we have
\begin{equation}
\label{eq:estabphilN}
\left|\frac{a(\l)b(\l)}{c(\l)c(-\l)}F_\l(\exp H) \right|
\leq
C_{R,N,\delta} (1+\|\l\|)^{-N}
\end{equation}
Consequently,
\begin{equation}
\label{eq:estWabphilN}
\left|\big(\sum_{w\in W} a(w\l)b(w\l)\big)\frac{F_\l(\exp H)}{c(\l)c(-\l)} \right|
\leq
C_{R,N,\delta} |W|(1+\|\l\|)^{-N}\,.
\end{equation}
\end{enumerate}
\end{Lemma}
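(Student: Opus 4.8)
The plan is to establish (a) first, and then to obtain (b) together with its corollary by combining (a) with the Hardy bound for $a$ and Opdam's estimates for $F_\l$.

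To prove (a) I would start from the factorization (\ref{eq:b-first-formula}),
\[
\frac{b(\l)}{c(\l)c(-\l)}=2^{-l}\,d(\l-\rho)\prod_{j=1}^l\frac{1}{\sin\big(\pi(\l_j-\rho_j)\big)}\,,
\]
and estimate each factor. For $\l\in T_\delta$ we have $\Re\l_\b>-\delta L_\b\geq -L_\b$ for every $\b\in\Sigma_*^+$, so (for $\delta<1$, with room $\varepsilon=(1-\delta)\min_{\b}L_\b>0$) the hypotheses of the polynomial bound (\ref{eq:est-d}) in Lemma \ref{lemma:d}(c) hold on $T_\delta$ and give $|d(\l-\rho)|\leq C(1+\|\l\|)^M$. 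For the reciprocal sines I would treat each coordinate separately: when $|\Im\l_j|\geq1$ the estimate (\ref{eq:est-sin}) gives $|\sin(\pi(\l_j-\rho_j))|^{-1}\leq Ke^{-\pi|\Im\l_j|}$, while on the strip $|\Im\l_j|<1$ one is in a relatively compact part of the $\l_j$-plane on which $b/(c(\l)c(-\l))$ is holomorphic by Corollary \ref{cor:b-Plancherel-tube-holo} (the poles $\l_j=\rho_j+k$ lie on or beyond $\Re\l_j=L_{\b_j}$); there $|\sin(\pi(\l_j-\rho_j))|^{-1}$ is bounded and hence also $\leq Ce^{-\pi|\Im\l_j|}$ since $e^{-\pi|\Im\l_j|}\geq e^{-\pi}$. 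Multiplying over $j$ produces the factor $e^{-\pi\sum_j|\Im\l_j|}$, and combined with the bound on $d(\l-\rho)$ this is exactly (\ref{eq:estimatebcTdelta}).

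For (b) I would first record that $a$ is available and obeys the Hardy bound (\ref{eq:defHardyclass}) on $T_\delta$ (here $T_\delta\subset\mathcal H(\delta)$, using $-\delta L_\b\geq-\delta\wt\rho_\b$; in the borderline multiplicity cases where $L_\b>\wt\rho_\b$ one works on $T_\delta\cap\mathcal H(\delta)$). Next I would bound $F_\l(\exp H)$ uniformly for $\|H\|<R$: since $H$ is real, (\ref{eq:OpdamEstimates}) gives $|F_\l(\exp H)|\leq Ce^{\max_{w\in W}\Re(w\l(H))}\leq Ce^{R\|\Re\l\|}$, and on $T_\delta$ the real part is bounded by $\|\Re\l\|\leq\Omega\sum_j|\Re\l_j|<\Omega\delta\sum_jL_{\b_j}$, so $F_\l(\exp H)$ is bounded by a constant depending only on $R$ and $\delta$. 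Multiplying the Hardy bound, the bound from (a), and this bound on $F_\l$, the factors $e^{\constA|\Im\l_j|}$ and $e^{-\pi|\Im\l_j|}$ combine to $e^{-(\pi-\constA)|\Im\l_j|}$, yielding
\[
\left|\frac{a(\l)b(\l)}{c(\l)c(-\l)}F_\l(\exp H)\right|\leq C\,(1+\|\l\|)^M\,e^{-\constP\sum_j\Re\l_j}\,e^{-(\pi-\constA)\sum_j|\Im\l_j|}\,.
\]
On $T_\delta$ the factor $e^{-\constP\sum_j\Re\l_j}$ is bounded, and since $\constA<\pi$ the remaining factor decays exponentially in $\sum_j|\Im\l_j|$. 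Because $\Re\l$ is bounded on $T_\delta$, one has $\sum_j|\Im\l_j|\geq c(\|\l\|-C_\delta)$ for some $c>0$, so this exponential decay dominates $(1+\|\l\|)^{M+N}$ for every $N$; this gives (\ref{eq:estabphilN}). For the corollary (\ref{eq:estWabphilN}) I would use the $W$-invariance of the Plancherel density $c(\l)c(-\l)$ and of $\l\mapsto F_\l$ to write, for each $w\in W$,
\[
\frac{a(w\l)b(w\l)}{c(\l)c(-\l)}F_\l(\exp H)=\frac{a(w\l)b(w\l)}{c(w\l)c(-w\l)}F_{w\l}(\exp H)\,;
\]
since $T_\delta$ is $W$-invariant (Lemma \ref{lemma:TLbeta}) and $\|w\l\|=\|\l\|$, the estimate (\ref{eq:estabphilN}) applies to each of the $|W|$ summands with $w\l$ in place of $\l$, and summing produces the factor $|W|$.

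The main obstacle is the uniform polynomial estimate (a). The two available inputs are complementary but do not overlap: (\ref{eq:est-d}) controls $d(\l-\rho)$ only under an $\arg$- or a half-plane condition, and (\ref{eq:est-sin}) controls the reciprocal sines only for $|\Im\l_j|\geq1$. The delicate point is the transition region $|\Im\l_j|<1$ near the walls $\Re\l_j=\pm L_{\b_j}$, where one must invoke the holomorphy of $b/(c(\l)c(-\l))$ on the open tube $T_\Sigma\supset T_\delta$ (Corollary \ref{cor:b-Plancherel-tube-holo}) to stay bounded away from the singular hyperplanes $\mathcal H_{j,k}$; the compactness of this region then furnishes the missing bound. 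Once (a) is secured, (b) and its corollary are a routine combination of the Hardy bound, the uniform bound on $F_\l$, and the conversion of exponential decay in $\Im\l$ into arbitrary polynomial decay in $\|\l\|$.
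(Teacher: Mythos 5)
Your part (b) and the deduction of (\ref{eq:estWabphilN}) are essentially the paper's own argument (the paper bounds $F_\l$ on $T_\delta$ via (\ref{eq:Schapira}) rather than (\ref{eq:OpdamEstimates}), which is immaterial), and the conversion of exponential decay in $\sum_j|\Im\l_j|$ into arbitrary polynomial decay in $\|\l\|$ is correct. The problem is part (a), which you correctly single out as the main obstacle but then misdiagnose.

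Bounding $|d(\l-\rho)|$ and $\prod_j|\sin(\pi(\l_j-\rho_j))|^{-1}$ \emph{separately} cannot work on all of $T_\delta$. The reciprocal sine has poles at every $\l_j=\rho_j+k$, $k\in\Z$, and for $k\le -1$ these points typically lie \emph{inside} the strip $|\Re\l_j|<\delta L_{\b_j}$ (e.g.\ for a reduced rank-one system with $m_\b=2$ one has $\rho_j=L_{\b_j}=1$ and $\l_j=0$ is such a pole for every $\delta>0$); so $|\sin(\pi(\l_j-\rho_j))|^{-1}$ is \emph{not} bounded on $\{|\Im\l_j|<1\}\cap T_\delta$, contrary to your claim. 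These points are not singularities of $b/(c(\l)c(-\l))$ only because the zeros of $d(\l-\rho)$ coming from $\Gamma(\l_\b-\wt\rho_\b+1)^{-1}$ and $\Pi(\l)$ cancel them, but your factorization separates the zero from the pole and an upper bound on each factor cannot resolve the resulting $0\cdot\infty$. Your proposed repair does not close this gap: you locate the difficulty near the walls $\Re\l_j=\pm L_{\b_j}$ and at the hyperplanes $\mathcal H_{j,k}$ (which sit at $\l_j=\rho_j+k$ with $k\ge0$, outside $T_\delta$), whereas the trouble is at the interior points with $k\le-1$; moreover the set $\{\l\in T_\delta:|\Im\l_j|<1\}$ is not compact, since the other coordinates $\l_h$ are unrestricted, so holomorphy plus compactness yields neither a bound on the individual sine factor nor a bound of the required product form $(1+\|\l\|)^M e^{-\pi\sum_h|\Im\l_h|}$. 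The paper's proof fixes exactly this point by regrouping \emph{before} estimating: it writes $\Gamma(\l_j-\rho_j+1)^{-1}=p_j(\l)\,\Gamma(\l_j-(\rho_j-h_j)+1)^{-1}$ with $p_j(\l)=\prod_{k=1}^{h_j}\bigl(\l_j-(\rho_j-k)\bigr)$ and $2\rho_j-1\le h_j<2\rho_j$, so that the single-variable factor $p_j(\l)/\sin(\pi(\l_j-\rho_j))$ is holomorphic on the strip, bounded for $|\Im\l_j|\le1$, and $O\bigl((1+|\l_j|)^{\deg p_j}e^{-\pi|\Im\l_j|}\bigr)$ overall, the Gamma-quotients then being estimated coordinate by coordinate. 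You need this (or an equivalent explicit pairing of the zeros of $d(\l-\rho)$ with the poles of the sines) for (a) to go through.
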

\begin{proof}
By Corollary \ref{cor:b-Plancherel-tube-holo}, for all $w \in W$, the function $\frac{b(w\l)}{c(\l)c(-\l)}$
is holomorphic on the $W$-invariant tube domain $T_\delta$. Set 
$$p_j(\l)=\big(\l_j-(\rho_j-1)\big)\big(\l_j-(\rho_j-2)\big)\cdots \big(\l_j-(\rho_j-h_j)\big)$$ 
where $h_j \in \Z^+$ is chosen so that $2\rho_j-1 \leq h_j < 2\rho_j$. Then, by (\ref{eq:b-on-Plancherel}), 
\begin{multline}
\label{eq:bontube}
\frac{b(\l)}{c(\l)c(-\l)}=C_d \; \Pi(\l) \left(\prod_{\b\in\Sigma_*^+} 
\frac{\Gamma\big( \l_\b +\frac{m_{\b/2}}{4}+\frac{1}{2}\big)}
{\Gamma\big( \l_\b -\frac{m_{\b/2}}{4}+\frac{1}{2}\big)}\right) \times \\
 \left(\prod_{\b\in\Sigma_*^+\setminus\{\b_1,\dots,\b_l\}}  
\frac{\Gamma( \l_\b +\wt\rho_\b)}{\Gamma( \l_\b -\wt\rho_\b+1)} \right)
\left(\prod_{j=1}^l \frac{\Gamma(\l_j+\rho_j)}{\Gamma(\l_j -(\rho_j-h_j)+1)} 
\frac{p_j(\l)}{\sin\big(\pi(\l_j-\rho_j)\big)}\right)
\end{multline}
For fixed $\eta \in ]0,1[$, the function $\frac{z}{\sin(\pi z)}$
is bounded on $\{z \in \C: |\Im z|\leq 1, |\Re z|\leq \eta\}$. 
By (\ref{eq:est-sin}), we conclude that there is a constant 
$C'_\delta>0$ so that for any fixed $j=1,\dots, l$ and every $\l=\sum_h \l_h \omega_h$ with 
$|\Re\l_j|\leq \delta L_{\b_j}$ and arbitrary $\l_h \in \C$ with $h\neq j$, we have
$$\left| \frac{p_j(\l)}{\sin\big(\pi(\l_j-\rho_j)\big)}\right| \leq C'\delta (1+|\l_j|)^{\deg p_j} e^{-\pi|\Im \l_j|}\,.$$
Part (a) then follows from these estimates and (\ref{eq:bontube}).

To prove (b), observe first that $T_\delta$ is a $W$-invariant subset of $\mathcal H(\delta)$. 
Hence $$\big(\sum_{w\in W} a(w\l)b(w\l)\big)\frac{F_\l(\exp H)}{c(\l)c(-\l)}$$ is holomorphic on $T_\delta$
for every fixed $a\in\mathcal H(A,P,\delta)$.
Let $R>0$ be fixed. By (\ref{eq:Schapira}) there is a constant $C_{R,\delta}>0$ so that
\begin{equation}
\label{eq:estphilcomp}
|F_\l(\exp H)| \leq C_{R,\delta}
\end{equation}
for all $\l \in T_\delta$ and $H\in \mathfrak a$ with $\|H\| \leq R$.
(In fact, since $T_\delta \subset T_1$, one knows from \cite[Theorem 4.2]{NPP} that $C_{R,\delta}$ can 
be chosen to be equal to $1$.)
Together with Part (a), (\ref{eq:estphilcomp}) yields that there is a constant
$C'_{R,\delta}>0$ so that
\begin{equation*}
\left|\frac{a(\l)b(\l)}{c(\l)c(-\l)} F_\l(\exp H) \right|
\leq
C'_{R,\delta} (1+\|\l\|)^{M} e^{(\constA-\pi)\sum_{j=1}^l |\Im \l_j|}\,.
\end{equation*}
This implies (\ref{eq:estabphilN}) as $\constA < \pi$.
\end{proof}

The last group of estimates we need shows that if $a\in \mathcal H(\constA,\constP,\delta)$, then there is 
$\varepsilon\in ]0,1]$ so that the function
\begin{equation}
\label{eq:atilde}
\wt a(\l)=\sum_{w\in W} a(w\l) b(w\l)
\end{equation}
belongs to the $W$-invariant Schwartz space $\mathcal S(\mathfrak a_\varepsilon^*)^W$ defined in 
Section \ref{subsection:hypergeomFourier}.

For $0 \leq \eta < 1/2$ set 
\begin{equation}
 \label{eq:TPieta}
T_{\Pi,\eta}=\{\l\in\mathfrak a_\C^*: \text{$|\Re\l_\b|<\tfrac{1}{2}-\eta$ for all $\b \in \Sigma_*^+$}\}\,.
\end{equation}
So $T_{\Pi,0}=T_\Pi$ is the tube domain on which $\Pi( \l)b(\l)$ is holomorphic; 
see Corollary \ref{cor:b-Plancherel-tube-holo}.

\begin{Lemma}
\label{lemma:Schwartztube}
Set $s=|\Sigma_*^+|$.
\begin{enumerate}
\thmlist
\item
Let $0<\eta<1/2$. Then there is a constant $C_\eta>0$ so that
$$|\Pi(\l)b(\l)|\leq C_\eta (1+\|\l\|)^s e^{-\pi\big(\sum_{j=1}^l |\Im \l_j|\big)}$$
for all $\l \in T_{\Pi,\eta}$.
\item
Let $a \in \mathcal H(\constA,\constP,\delta)$ and set $\wt a(\l)=\sum_{w \in W} a(w\l)b(w\l)$. 
Then $\wt a$ is holomorphic in $T_{\Pi} \cap T_\delta$. Moreover, let $0 <\eta <\min\{1/2,\delta\}$. 
Then there are positive constants $C_{\eta,a}>0$ and $C_0$ so that
$$|\wt a(\l)| \leq C_{\eta,a} (1+\|\l\|)^s e^{(\constA-\pi)C_0 \|\Im \l\|}$$
for all $\l\in T_{\Pi,\eta} \cap T_{\delta-\eta}$. 
\item
Let $0 <\eta <\min\{1/2,\delta\}$ and set 
$$\gamma=\min\Big\{\delta-\eta,  \big(\tfrac{1}{2}-\eta\big) \min_{b\in\Sigma_*^+} L_\b^{-1} \Big\}\,.$$
Then $T_\gamma \subset T_{\Pi,\eta} \cap T_{\delta-\eta}$.
Moreover, let $0<\varepsilon <\gamma$. Then $\wt a \in \mathcal S(\mathfrak a^*_\varepsilon)^W$, 
the $W$-invariant Schwartz space on
the tube domain $T_\varepsilon$.
\end{enumerate}
\end{Lemma}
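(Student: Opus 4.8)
My plan is to establish the three parts in order, the holomorphy and estimate of part (b) being the crux. For part (a) I would start from the explicit product (\ref{eq:bexplicit-one}) for $b(\l)$ and distribute the $s=|\Sigma_*^+|$ factors of $\Pi(\l)=\prod_{\b\in\Sigma_*^+}\l_\b$ among the factors $1/\sin(\pi\l_\b)$. On $T_{\Pi,\eta}$ the strict inequality $|\Re\l_\b|<\tfrac12-\eta$ keeps $\l_\b$ away from the integers and the half-integers, so that $\l_\b/\sin(\pi\l_\b)$ (holomorphic, the singularity at $\l_\b=0$ being removable) and the quotient $\cos(\pi(\l_\b-\tfrac{m_{\b/2}}4))/\cos(\pi\l_\b)$ are bounded, the former carrying a decay $e^{-\pi|\Im\l_\b|}$ up to the polynomial factor $\l_\b$. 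Tracking the net exponential rate factor by factor, the surviving sine $\sin(\pi(\l_\b-\wt\rho_\b))\sim\tfrac12 e^{\pi|\Im\l_\b|}$, present precisely for $\b\notin\{\b_1,\dots,\b_l\}$, cancels this decay and leaves a purely polynomial factor, whereas for $\b=\b_j$ there is no such sine and a net decay $e^{-\pi|\Im\l_j|}$ remains. Collecting the $s$ polynomial factors into $(1+\|\l\|)^s$ and the $l$ surviving decays into $e^{-\pi\sum_{j=1}^l|\Im\l_j|}$ gives (a).

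For part (b) I would first note that $\wt a(\l)=\sum_{w\in W}a(w\l)b(w\l)$ is $W$-invariant, since replacing $\l$ by $u\l$ merely permutes the summands. To prove holomorphy on $T_\Pi\cap T_\delta$ I would combine two observations. First, $T_\delta$ is $W$-invariant and contained in $\mathcal H(\delta)$ (indeed $L_\b\le\wt\rho_\b$ for every $\b$, by (\ref{eq:Lbeta}) and the multiplicity conditions recorded in Tables 1 and 2), so each $a(w\l)$ is holomorphic on $T_\Pi\cap T_\delta$; and on $T_\Pi$ the only poles of $b(w\l)$ lie along the hyperplanes $\l_\gamma=0$, $\gamma\in\Sigma_*^+$, and are simple, so by Lemma \ref{lemma:Pib-holo} the products $\Pi(\l)b(w\l)$, and hence $\Pi(\l)\wt a(\l)$, are holomorphic there. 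This already shows that $\wt a$ can have at most a simple pole along each $\l_\gamma=0$. Second, I invoke $W$-invariance: the reflection $s_\gamma$ fixes $\{\l_\gamma=0\}$ pointwise and sends $\l_\gamma\mapsto-\l_\gamma$, so in the Laurent expansion of the $W$-invariant function $\wt a$ in the variable $\l_\gamma$ only even powers can survive; a simple (odd-order) pole is therefore impossible. Together these two facts force $\wt a$ to be holomorphic on all of $T_\Pi\cap T_\delta$.

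For the bound in (b) I would estimate $\Pi(\l)\wt a(\l)=\sum_w a(w\l)\,\Pi(\l)b(w\l)$ termwise on $T_{\Pi,\eta}\cap T_{\delta-\eta}$. Since $|\Pi(w\l)|=|\Pi(\l)|$ and $w\l\in T_{\Pi,\eta}$, part (a) applied to $w\l$ bounds $|\Pi(\l)b(w\l)|$ by $C_\eta(1+\|\l\|)^s e^{-\pi\sum_j|\Im(w\l)_j|}$; and because $w\l\in T_{\delta-\eta}\subset\mathcal H(\delta)$ has bounded real coordinates, the Hardy estimate (\ref{eq:defHardyclass}) contributes a harmless factor from the $\constP$-term together with $e^{\constA\sum_j|\Im(w\l)_j|}$. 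Multiplying and using $\sum_j|\Im(w\l)_j|\ge C_0\|\Im\l\|$ (the $\ell^1$-norm in the $\b_j$-coordinates is equivalent to $\|\cdot\|$, which is $W$-invariant) yields $|\Pi(\l)\wt a(\l)|\le C(1+\|\l\|)^s e^{(\constA-\pi)C_0\|\Im\l\|}$, with exponential decay in $\|\Im\l\|$ because $\constA<\pi$. The main obstacle is to pass from this bound on $\Pi\wt a$ to the stated bound on $\wt a$ itself, since $\Pi$ vanishes along the hyperplanes $\l_\gamma=0$ and one cannot simply divide. Here I would exploit that $\wt a$ is already known to be holomorphic and argue by the maximum principle along complex lines $\l=\l_0+\zeta v$ transverse to the (finitely many) hyperplanes near which $\l_0$ lies: on a circle $|\zeta|=R$ of fixed radius, with $R$ controlled by the margin $\eta$, the point $\l$ stays inside $T_\Pi\cap T_\delta$ and $|\Pi|$ is bounded below, so that $|\wt a(\l_0)|\le\max_{|\zeta|=R}|\Pi\wt a|/|\Pi|$ inherits the majorant up to a constant. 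This is the analogue of the corresponding estimate in \cite{OP-Ramanujan-JFA}.

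Finally, part (c) is bookkeeping once (a) and (b) are in hand. The inclusion $T_\gamma\subset T_{\Pi,\eta}\cap T_{\delta-\eta}$ follows from the definition of $\gamma$: $\gamma\le\delta-\eta$ gives $\gamma L_\b\le(\delta-\eta)L_\b$, and $\gamma\le(\tfrac12-\eta)\min_{\b}L_\b^{-1}\le(\tfrac12-\eta)L_\b^{-1}$ gives $\gamma L_\b\le\tfrac12-\eta$, so $|\Re\l_\b|<\gamma L_\b$ forces both tube conditions. For $0<\varepsilon<\gamma$ the closure $\overline{T_\varepsilon}$ lies in the open set $T_\gamma\subset T_\Pi\cap T_\delta$, whence $\wt a$ is $W$-invariant, holomorphic in the interior of $T_\varepsilon$ and continuous up to its boundary. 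On $T_\varepsilon$ the real part is bounded, so $\|\l\|\to\infty$ forces $\|\Im\l\|\to\infty$; the estimate of (b) then makes $(1+\|\l\|)^r|\wt a(\l)|$ bounded for every $r$, and Cauchy's estimates on the slightly larger $T_\gamma$ upgrade this to bounds on every derivative $\partial(s)\wt a$. Hence $\wt a\in\mathcal S(\mathfrak a^*_\varepsilon)^W$, which would complete the proof.
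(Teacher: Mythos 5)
Your proposal follows essentially the same route as the paper's own proof: for (a) the factor\mbox{-}by\mbox{-}factor exponential bookkeeping in (\ref{eq:bexplicit-one}) with the surviving decay $e^{-\pi|\Im\l_j|}$ only for $\b\in\{\b_1,\dots,\b_l\}$; for (b) the $W$\mbox{-}invariance/parity argument ruling out simple poles of $\wt a$ along $\l_\gamma=0$, a termwise combination of (a) with the Hardy bound to estimate $\Pi(\l)\wt a(\l)$, and a maximum\mbox{-}principle division by $\Pi$ (the step the paper delegates to \cite{OP-Ramanujan-JFA}); and for (c) the inclusion $T_\gamma\subset T_{\Pi,\eta}\cap T_{\delta-\eta}$ plus Cauchy estimates. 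One small caveat: your parenthetical claim that $L_\b\le\wt\rho_\b$ for every $\b$ is not true for all positive multiplicities (e.g.\ $m_{\b/2}=m_\b=\tfrac12$ falls in case (2) of Table 1 and gives $L_\b=\tfrac{m_{\b/2}}{4}+\tfrac12=\tfrac58>\tfrac38=\wt\rho_\b$), although the inclusion $T_\delta\subset\mathcal H(\delta)$ that it is meant to justify is likewise asserted without proof in the paper.
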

\begin{proof}
Since $\Pi(\l)b(\l)$ is bounded on $T_{\Pi,\eta}$, the proof of the estimate in (a) follows 
the same argument used in part (a) of Lemma \ref{lemma:estimatesTdelta}.

To prove part (b), notice first that, by Corollary \ref{cor:b-Plancherel-tube-holo}, on $T_{\Pi}$
the function $b(\l)$ has at most simple poles on hyperplanes of the form $\l_\b=0$ with 
$\b \in\Sigma_*^+$. The same property holds on $T_{\Pi} \cap T_\delta$ for $b(w\l)a(w\l)$, 
with $w \in W$, and hence for $\wt a(\l)$. 
But $\wt a(\l)$, as a $W$-invariant function, cannot admit first order singularities on root 
hyperplanes through the origin. Thus $\wt a$ is holomorphic on $T_{\Pi} \cap T_\delta$.

Let $0<\eta<\eta'<\min\{1/2,\delta\}$. Choose $C_0>0$ so that $\sum_{j=1}^l |\Im\l_j| \leq C\|\Im\l\|$ for 
all $\l \in \mathfrak a^*$. By (a), there is a constant $C_{\eta'}>0$ so that
$$|\Pi(\l)b(\l)a(\l)| \leq C_{\eta'} (1+\|\l\|)^s e^{(\constA-\pi)\|\Im \l\|_1 }
\leq C_{\eta'}  (1+\|\l\|)^s e^{(\constA-\pi)C_0\|\Im \l\|}$$
for all $\l\in T_{\Pi,\eta'}\cap T_{\delta-\eta'}$.
The required estimate for $\wt a$ on $T_{\Pi,\eta}\cap T_{\delta-\eta}$ follows then as in 
\cite[Lemma 5.6(b)]{OP-Ramanujan-JFA}.

To show that $T_\gamma \subset T_{\Pi,\eta} \cap T_{\delta-\eta}$, notice that $\gamma \leq \delta-\eta$ 
and that if $\l\in T_{\gamma}$, then $|\Re\l_\b|<\gamma L_\b \leq \tfrac{1}{2}-\eta$.

The property that $\wt a \in \mathcal S(\mathfrak a^*_\varepsilon)^W$ for $0<\varepsilon < \gamma$ 
can be proven as in \cite[Lemma 5.6(c)]{OP-Ramanujan-JFA}, using (b) and Cauchy's estimates. 
\end{proof}

Lemma \ref{lemma:Schwartztube} allows us to apply the inversion formula (\ref{eq:inversionsphericalG})
to $\wt a$ and conclude that $\mathcal F^{-1} \wt a \in \mathcal S^p(A)^W \subset (L^p \cap L^2)(A,d\mu)^W$
where $p=2/(\varepsilon+1) \in ]1,2[$. The function $\wt a$ is therefore the hypergeometric Fourier transform 
of $\mathcal F^{-1} \wt a$ and the equality in part (c) of Theorem \ref{thm:RamanujanHO}, initially valid on 
$T_\varepsilon$, extends holomorphically to $T_\Pi \cap T_\delta$. 

We leave the reader to follow the proof of 
\cite[Theorem 2.1, Section 6]{OP-Ramanujan-JFA}, to fill in the missing details of the proof of Theorem 
\ref{thm:RamanujanHO}.

\section{Examples}
\label{section:examples}
Let $(\mathfrak a,\Sigma,m)$ be a triple of rank one with $\Sigma$ of type $A_1$. 
Hence $\Sigma^+=\{\beta\}$ for a unique root $\beta$. 
To simplify notation, we shall write $m$ instead of $m_\beta$. 
Recall from Example \ref{ex:Frankone-compact} the identification of the polynomials $F_{n\b+\rho}(t)$, 
where $t=\exp H \in T$,  with the symmetric Jacobi 
polynomials $X_n^{(m-1)/2}(x)=\hyper{-n}{n+m}{(m+1)/2}{(1-x)/2}$ with $x=\cos\b(H)$. 
By (\ref{eq:d}), 
$$
d(n\b)=\frac{1}{c(n\b+\rho)c^*(-n\b-\rho)}= C_d \big(n+\tfrac{m}{2}\big) \frac{\Gamma(n+m)}{\Gamma(n+1)}\,,
$$  
where $C_d=\wt c(\rho)\wt c\,^*(-\rho)=2/\Gamma(m+1)$.
We identify $\mathfrak a \equiv \mathfrak a^* \equiv \C$ as in Example \ref{ex:Frankone-compact}. 
This means that $\l \in \mathfrak a^*_\C$ and $H \in \mathfrak a_\C=\mathfrak a\oplus \mathfrak t$ 
are respectively identified with $\l_\b \in \C$ and $\b(H)/2 \in \C$.

Let $0 \leq \constA < \pi$ and $\constP>0$ be fixed, and set $a(\l)=e^{-(\constP+i\constA)\l}$. Then 
$$
|a(\l)| =e^{-\constP\Re\l+\constA\Im\l} \leq  e^{-\constP\Re\l+\constA|\Im\l|}
$$
for all $\l \in \mathfrak a^*\equiv \C$. Hence $a \in \mathcal H(\constA,\constP,\delta)$ for every value of $m>0$ 
and for all $\delta \in ]0,1]$. 

Consider the alternating normalized Jacobi series (\ref{eq:fFourier}) for $t=\exp H \in T$    
\begin{align*}
f(t)&=\sum_{n=0}^\infty (-1)^n d(n\b) a(n\b+\rho) F_{n\b+\rho}(t)\\
&=C_d \, e^{-(\constP+i\constA)m/2} \sum_{n=0}^\infty  (-1)^n \big(n+\tfrac{m}{2}\big) \, 
\frac{\Gamma(n+m)}{\Gamma(n+1)}\, e^{-(\constP+i\constA)n}
X_n^{(m-1)/2}(\cos\b(H))\,.
\end{align*}
According to \cite[(10)]{Cowgill}, one has for $|\tau|<1$
\begin{equation}
\label{eq:normalizedserie-ex}
\sum_{n=0}^\infty  \big(n+\tfrac{m}{2}\big) \, \frac{\Gamma(n+m)}{\Gamma(n+1)}\, X_n^{(m-1)/2}(x) \tau^n
= \frac{\Gamma(m +1)}{2} \; G(m,x,\tau)
\end{equation}
where
\begin{equation}
G(m,x,\tau)= \frac{1-\tau^2}{(1-2\tau x+\tau^2)^{1+\tfrac{m}{2}}}
\end{equation}
Thus for $t=\exp H \in T$
\begin{equation}
f(t)=e^{-(\constP+i\constA)\frac{m}{2}} G(m,\cos\b(H),-e^{-(\constP+i\constA)})\,.
\end{equation}
The first part of Ramanujan's Master theorem \ref{thm:RamanujanHO} states that the normalized series 
(\ref{eq:normalizedserie-ex}) converges normally to $f$ on compact subsets of $\{z=i\b(H)/2 \in \C:|\Re z|<\constP/2\}$.
Special instances of the above formulas, involving Legendre or Tchebishef polynomials,
are obtained by selecting particular values of $m$; see Example \ref{ex:Frankone-compact}.

Since $\Sigma$ is reduced, we have $b(\l)=C_b [\sin(\pi\l)]^{-1}$, where
$$C_b=\frac{1}{2} \; \frac{\wt c\,^*(-\rho)}{\wt c(\rho)}=\frac{1}{2} \; 
\frac{\Gamma(m)}{\Gamma\big(\tfrac{m}{2}\big)\Gamma\big(1+\tfrac{m}{2}\big)}$$ is as in (\ref{eq:Cb}).
Hence 
\begin{equation}
 \label{eq:ab-ex}
\sum_{w \in W} a(w\l)b(w\l)=2C_b \; \frac{\sinh((\constP+i\constA)\l)}{\sin(\pi \l)}\,.
\end{equation}
Set $u=\b(H)$ for $H \in \mathfrak a$. Then 
\begin{equation}
f(\exp H)=e^{-(\constP+i\constA)\frac{m}{2}}  G(m,\cosh u,-e^{-(\constP+i\constA)})= \frac{2^{-\frac{m}{2}}\sinh(\constP+i\constA)}{(\cosh u+ \cosh(\constP+i\constA))^{\frac{m}{2}+1}}\,.
\end{equation}

The second part of Ramanujan's Master theorem says that for $u \in ]-\constP,\constP[$ 
and $\sigma \in ]-m/2,m/2[$
we have 
\begin{multline} \label{eq:ex-sum}
\frac{2^{-\frac{m}{2}}\sinh(\constP+i\constA)}{(\cosh u+ \cosh(\constP+i\constA))^{\frac{m}{2}+1}}= \\
C_b \int_{\sigma+i\R} \frac{\sinh((\constP+i\constA)\l)}{\sin(\pi \l)} 
\hyper{\tfrac{m+\l}{2}}{\tfrac{m-\l}{2}}{\tfrac{m+1}{2}}{-\sinh^2 u}\; \frac{d\l}{c(\l)c(-\l)}\,,
\end{multline}
where the integral on the right-hand side extends as an even homomorphic function on a 
neighborhood of $\R$ inside $\C$. Notice that the left-hand side is holomorphic near $\R$. The equality 
(\ref{eq:ex-sum}) therefore holds for all $u \in \R$. In (\ref{eq:ex-sum}) we have used the formula for $F_\l(x)$ given  in Example 
\ref{ex:Frankone} together with the duplication formula
\begin{equation}
\label{eq:hyper-duplication}
\hyper{a}{b}{a+b+1/2}{4z(1-z)}=\hyper{2a}{2b}{a+b+1/2}{z}\,.
\end{equation}
See \cite[2.1.5(27)]{Er}.
(Recall also that the $c$-function appearing under the integral sign in fact depends on the parameter $m>0$.)

If we take for instance $\constA=\sigma=0$, then (\ref{eq:ex-sum}) yields
\begin{equation*} 
\frac{2^{-\frac{m}{2}}\sinh\constP}{(\cosh u+ \cosh\constP)^{\frac{m}{2}+1}} 
=C_b \int_{i\R} \frac{\sinh(\constP\l)}{\sin(\pi \l)} \hyper{\tfrac{m+\l}{2}}{\tfrac{m-\l}{2}}{\tfrac{m+1}{2}}{-\sinh^2 u}\; \frac{d\l}{c(\l)c(-\l)}\,,
\end{equation*}
that is (since the Lebesgue measure on $i\R$ is $d\l$, $i\l \in i\R$)
\begin{multline} \label{eq:ex-withzerosbutum}
\frac{2^{-\frac{m}{2}+1}\sinh\constP}{(\cosh u+ \cosh\constP)^{\frac{m}{2}+1}}= \\
\frac{\Gamma(m)}{\Gamma\big(\tfrac{m}{2}\big)\Gamma\big(1+\tfrac{m}{2}\big)}
\int_{-\infty}^{+\infty} \frac{\sin(\constP\l)}{\sinh(\pi \l)} \hyper{\tfrac{m+\l}{2}}{\tfrac{m-\l}{2}}{\tfrac{m+1}{2}}{-\sinh^2 u}    \frac{d\l}{c(i\l)c(-i\l)}\,.
\end{multline}
Suppose moreover that $u=0$. Since $1+\cosh\constP=2\cosh^2(\constP/2)$, we obtain from 
 (\ref{eq:ex-withzerosbutum})
\begin{equation} \label{eq:ex-withzerosbutm}
2^{-m+1} \big[\cosh(\constP/2)\big]^{-m} \tanh(\constP/2) =
\frac{\Gamma(m)}{\Gamma\big(\tfrac{m}{2}\big)\Gamma\big(1+\tfrac{m}{2}\big)}
\int_{-\infty}^{+\infty} \frac{\sin(\constP\l)}{\sinh(\pi \l)}  \frac{d\l}{c(i\l)c(-i\l)}\,.
\end{equation}
In the case $m=2$,  (\ref{eq:ex-withzerosbutm}) gives
\begin{equation} \label{eq:ex-m=2}
\int_{-\infty}^{+\infty} \frac{\sin(\constP\l)}{\sinh(\pi \l)} \l^2  \; d\l
=\tfrac{1}{2}\, \big[\cosh(\constP/2)\big]^{-2} \tanh(\constP/2).
\end{equation}
Notice that (\ref{eq:ex-m=2}) can also be computed from the classical integral formula
\begin{equation}\label{eq:classicalDwight-one}
\int_{-\infty}^{+\infty} \frac{\sin(\constP x)}{\sinh(\pi x)} \; dx=\frac{1-e^{-\constP}}{1+e^\constP}=\tanh(\constP/2)\,,
\end{equation}
see e.g. \cite[(861.61)]{Dwight}. 
In fact,
$$\int_{-\infty}^{+\infty} \frac{\sin(\constP\l)}{\sinh(\pi \l)} \l^2  \; d\l
=-\frac{d^2}{d\constP^2} \int_{-\infty}^{+\infty} \frac{\sin(\constP\l)}{\sinh(\pi \l)}  \; d\l
=-\frac{d^2  \tanh(\constP/2)}{d\constP^2}\,.$$
For $m=1$, (\ref{eq:ex-withzerosbutm}) yields
\begin{equation} \label{eq:eq:ex-m=1}
\int_{-\infty}^{+\infty} \frac{\sin(\constP\l)}{\sinh(\pi \l)}\,  \l\tanh(\pi\l)  \; d\l=
\big[\cosh(\constP/2)\big]^{-1} \tanh(\constP/2)\,,
\end{equation}
which is again a classical integral formula (see e.g. \cite[(861.81)]{Dwight}).

Finally, the Plancherel formula in the last part of the theorem proves that 
\begin{multline} \label{eq:Plancherel-ex}
\sinh^2(\constP+i\constA) \int_0^\infty \frac{\sinh^m u}{(\cosh u+\cosh(\constP+i\constA))^{m+2}} \; du\\
=\Big[\frac{\Gamma(m)}{2\Gamma\big(\frac{m}{2}\big)\Gamma\big(\frac{m}{2}+1\big)}\Big]^2 
\int_0^\infty \frac{|\sin((\constP+i\constA)\l)|^2}{\sinh^2(\pi \l)} \; \frac{d\l}{c(\l)c(-\l)}\,.
\end{multline}
Moreover, according to (\ref{eq:RamanujanHO}), 
\begin{multline} \label{eq:inversionformulal-ex}
2^{\frac{m}{2}+1} \sinh^2(\constP+i\constA) \int_0^\infty \frac{ \hyper{\tfrac{m+\l}{2}}{\tfrac{m-\l}{2}}{\tfrac{m+1}{2}}{-\sinh^2 u} \sinh^m u }{(\cosh u+\cosh(\constP+i\constA))^{\frac{m}{2}+1}}    \; du=\\
\frac{\Gamma(m)}{\Gamma\big(\frac{m}{2}\big)\Gamma\big(\frac{m}{2}+1\big)} 
 \frac{\sinh((\constP+i\constA)\l)}{\sin(\pi \l)}
\end{multline}
for all $\l \in \C$ with $|\Re\l| < \min\{1/2,m/2\}$.

For $\constA=0$, the right-hand side of (\ref{eq:Plancherel-ex}) can be computed using the integral formula \cite[3.516(3)]{GR}:
\begin{equation*}
\int_0^\infty \frac{\sinh^m u}{(\cosh \constP+\cosh u)^{m+2}} \; du
= \pi^{-\frac{1}{2}} 2^{\frac{m}{2}} e^{-i\frac{m}{2}\,\pi} \frac{\Gamma\big(m+\frac{1}{2}\big)}{\Gamma(m+2)} (\coth^2 \constP-1) Q^{m/2}_{2+m/2}(\coth \constP)\,, 
\end{equation*}
where 
\begin{equation*}
Q^\mu_\nu(z)=\frac{e^{\mu\pi i } \Gamma(\nu+\mu+1) \Gamma(1/2)}{2^{\nu+1} \Gamma(\nu+3/2)} 
\; (z^2-1)^{\mu/2} z^{-\mu-\nu-1} \hyper{\frac{\nu+\mu+2}{2}}{\frac{\nu+\mu+1}{2}}{\nu+\frac{3}{2}}{z^{-2}}
\end{equation*}
is the associated Legendre function of second kind. 

As a special case of (\ref{eq:inversionformulal-ex}) for $\constA=0$ and $m=2$, we notice the formula
\begin{equation} \label{eq:Ramanujanformula-ex}
\frac{1}{\l} \, \int_0^{+\infty} \frac{\sinh(\l u) \tanh u}{(\cosh \constP+\cosh u)^2} \; du=\frac{1}{4\sinh \constP}\;
\frac{\sinh(\constP \l)}{\sin(\pi\l)}\,,
\end{equation}
which is a consequence of 
\cite[2.11(2) and 2.8(12)]{Er} because
\begin{equation*}
\hyper{1+\frac{\l}{2}}{1-\frac{\l}{2}}{\frac{3}{2}}{-\sinh^2 u}=
\hyper{\frac{1}{2}+\frac{\l}{4}}{\frac{1}{2}-\frac{\l}{4}}{\frac{3}{2}}{-\sinh^2 (2u)}=\frac{2\sinh(\l u)}{\l\sinh(2u)}\,.
\end{equation*}


\begin{thebibliography}{AAAAAA}
\bibliographystyle{alpha}
\bibitem{AnkerJFA}
Anker, J.-P.: The spherical Fourier transform of rapidly decreasing functions. A simple proof of a characterization due to Harish-Chandra, Helgason, Trombi, and Varadarajan. \emph{J. Funct. Anal.} \textbf{96} (1991), no. 2, 331--349.

\bibitem{Araki}
Araki, S.~I. : On root systems and an infinitesimal classification of irreducible symmetric spaces.
\textit{J. Math. Osaka City Univ.} \textbf{13} (1962), 1--34.

\bibitem{Berndt}
Berndt, B.: \textit{Ramanujan's Notebooks, Part I}, Springer-Verlag, New York, 1985.

\bibitem{BertramCR}
Bertram, W.: G\'en\'eralisation d'une formule de Ramanujan dans le cadre de la transformation de Fourier sph\'erique associ\'ee \`a la complexification d'un espace sym\'etrique compact.  \textit{C. R. Acad. Sci. Paris S\'er. I Math.}  \textbf{316} (1993),  no. 11, 1161--1166

\bibitem{Bertram}
\bysame : Ramanujan's master theorem and duality of symmetric spaces. \textit{ J. Funct. Anal.}  \textbf{148}  (1997),  no. 1, 117--151.

\bibitem{BOP}
Branson, T. , G. {\'O}lafsson and A. Pasquale:
The Paley-Wiener theorem for the Jacobi transform and the local Huygens' principle for root systems with even multiplicities. \emph{Indag. Math. (N.S.)} \textbf{16} (2005), no. 3--4, 429--442.

\bibitem{Cowgill}
Cowgill, A.~P.: On the summability of a certain class of series of Jacobi polynomials.
  \textit{Bull. Amer. Math. Soc.} \textbf{41} (1935), 541--549. 

\bibitem{Ding}
Ding, H.: Ramanujan's master theorem for Hermitian symmetric spaces.  \textit{Ramanujan J.}  \textbf{1}  (1997),  no. 1, 35--52.

\bibitem{DGR}
Ding, H., K.~I.~Gross and D.~St.~P. Richards: Ramanujan's master theorem for symmetric cones.  \textit{Pacific J. Math.}  \textbf{175}  (1996),  no. 2, 447--490.

\bibitem{Dwight}
Dwight, H.~B.: \textit{Tables of integrals and other mathematical data}, The Macmillan Company, 1961.


\bibitem{Er}
Erd\'elyi, A. et~al.:
{\em {Higher transcendental functions}}, volume~1, McGraw-Hill, New York, 1953.

\bibitem{Er2}
Erd\'elyi, A. et~al.:
{\em {Higher transcendental functions}}, volume~2, McGraw-Hill, New York, 1953.

\bibitem{Faraut}
Faraut, J.: Espaces hilbertiens invariants de fonctions holomorphes. In: J. Faraut, F. Rouvi\`ere, M. Vergne, \textit{Analyse sur les groupes de Lie et th\'eorie des repr\'esentations}, S\'eminaires et Congr\`es \textbf{7} (2003), 101--167.

\bibitem{GV}
Gangolli, R. and V.~S.~Varadarajan:
{\em Harmonic analysis of spherical functions on real reductive groups}.
Springer-Verlag, Berlin, 1988.

\bibitem{GR}
Gradshteyn, I.~S. and I.~M.~Ryzhik:
{\em Table of Integrals, Series, and Products}, 7th edition, edited by A. Jeffrey and D. Zwillinger, 
Academic Press, 2007. 

\bibitem{Hardy}
Hardy, G.~H.:
\textit{Ramanujan: Twelve Lectures on Subjects Suggested by his Life and Work.} Chelsea Publishing Company, 1991.


\bibitem{Heck1}
Heckman, G.~J.:
\newblock Root systems and hypergeometric functions. {I}{I}.
\newblock {\em Compositio Math.} \textbf{64}, no. 3, 353--373.

\bibitem{HeckBou}
\bysame:
Dunkl operators.
{\em Ast\'erisque} \textbf{245} (1997), Exp.\ No.\ 828, 4, 223--246.
S\'eminaire Bourbaki, Vol.\ 1996/97.

\bibitem{HOpd1}
Heckman, G.~J. and E.~M.~Opdam:
\newblock Root systems and hypergeometric functions. {I}.
\newblock {\em Compositio Math.}  \textbf{64} (1987), no. 3, 329--352.

\bibitem{HS94}
Heckman, G. and H.~Schlichtkrull:
\newblock {\em {Harmonic analysis and special functions on symmetric spaces}}.
\newblock Academic Press, 1994.

\bibitem{He1} Helgason, S.:
\textit{Differential Geometry, Lie Groups, and Symmetric Spaces}.
Academic Press, New York, 1978.

\bibitem{He2}
Helgason, S.: \textit{Groups and Geometric Analysis. Integral Geometry, Invariant Differential Operators, and Spherical Functions.} Pure and Applied Mathematics, 113. Academic Press, Inc., Orlando, FL, 1984.

\bibitem{He3} \bysame :
\textit{Geometric Analysis on Symmetric Spaces}.
American Mathematical Society, Providence, 1994.

\bibitem{Lassalle}
Lassalle, M.: S\'eries de Laurent des fonctions holomorphes dans la complexification d'un espace sym\'etrique compact, \textit{Ann. Sci. Ec. Norm. Sup.} \textbf{11} (1978), 167--210.


\bibitem{MacdonaldSLC}
Macdonald, I.~G.: Orthogonal polynomials associated with root systems.  S{\'e}m. Lothar. Combin.  \textbf{45}  (2000/01), 
Art. B45a, 40 pp. (electronic)

\bibitem{NPP}
Narayanan, E.~K., A. Pasquale, and S. Pusti: Asymptotics of Harish-Chandra expansions, bounded hypergeometric functions associated with root systems, and applications, 30 pages, arXiv:1201.3891

\bibitem{OpdamShift}
Opdam, E.~M.: Some applications of hypergeometric shift operators. \emph{Invent. Math.} \textbf{98} (1989), 1--18.

\bibitem{OpdamActa}
Opdam, E.~M.:
Harmonic analysis for certain representations of graded Hecke algebras.
{\em Acta math.} \textbf{175} (1995), 75--12.

\bibitem{OPEven}
{\'O}lafsson, G. and Pasquale, A.: A {P}aley-{W}iener theorem for the {$\Theta$}-hypergeometric
 transform: the even multiplicity case, \textit{J. Math. Pures Appl.} \textbf{83} (2004), no. 7, 869--927.



\bibitem{OP-Ramanujan-JFA}
\'Olafsson, G. and A. Pasquale: Ramanujan's Master Theorem for Riemannian symmetric spaces, J. Funct. Anal. 
(2012), http://dx.doi.org/10.1016/j.jfa.212.3.006


\bibitem{Schapira}
Schapira, B.: Contributions to the hypergeometric function theory
of Heckman and Opdam: sharp estimates, Schwartz space, heat
kernel. {\em Geom. Funct. Anal.} \textbf{18} (2008), no. 1,
222--250.

\bibitem{Titchmarsch}
Titchmarsh, E.~C.: \emph{The theory of functions}, 2nd edition, Oxford, 1939.

\end{thebibliography}
\end{document}